% SIAM Article Template
\documentclass[onefignum,onetabnum]{siamart190516}
\usepackage{amsfonts}
\usepackage{graphicx,epstopdf}
\usepackage[caption=false]{subfig} 
\usepackage{algorithmic}
\Crefname{ALC@unique}{Line}{Lines}
\newsiamthm{example}{Example}
% Information that is shared between the article and the supplement
% (title and author information, macros, packages, etc.) goes into
% ex_shared.tex. If there is no supplement, this file can be included
% directly.

% SIAM Shared Information Template
% This is information that is shared between the main document and any
% supplement. If no supplement is required, then this information can
% be included directly in the main document.

% Packages and macros go here
\usepackage{lipsum}
\usepackage{amsfonts}
\usepackage{graphicx}
\usepackage{epstopdf}
\usepackage{algorithmic}
\ifpdf
  \DeclareGraphicsExtensions{.eps,.pdf,.png,.jpg}
\else
  \DeclareGraphicsExtensions{.eps}
\fi

% Add a serial/Oxford comma by default.

% Used for creating new theorem and remark environments
\newsiamremark{remark}{Remark}
\newsiamremark{hypothesis}{Hypothesis}
\crefname{hypothesis}{Hypothesis}{Hypotheses}
\newsiamthm{claim}{Claim}

% Sets running headers as well as PDF title and authors
\headers{An Example Article}{D. Doe, P. T. Frank, and J. E. Smith}

% Title. If the supplement option is on, then "Supplementary Material"
% is automatically inserted before the title.
\title{An Example Article\thanks{Submitted to the editors DATE.
\funding{This work was funded by the Fog Research Institute under contract no.~FRI-454.}}}

% Authors: full names plus addresses.
\author{Dianne Doe\thanks{Imagination Corp., Chicago, IL 
  (\email{ddoe@imag.com}, \url{http://www.imag.com/\string~ddoe/}).}
\and Paul T. Frank\thanks{Department of Applied Mathematics, Fictional University, Boise, ID 
  (\email{ptfrank@fictional.edu}, \email{jesmith@fictional.edu}).}
\and Jane E. Smith\footnotemark[3]}

\usepackage{amsopn}

%%% Local Variables: 
%%% mode:latex
%%% TeX-master: "ex_article"
%%% End: 

% Optional PDF information
\ifpdf
\hypersetup{
 pdftitle={Boosted scaled subgradient method for DC programming},
 pdfauthor={O. P. Ferreira, E. M. Santos, and J. C. O. Souza}
}
\fi

% The next statement enables references to information in the
% supplement. See the xr-hyperref package for details.

%\externaldocument{ex_supplement}

% FundRef data to be entered by SIAM
%<funding-group specific-use="FundRef">
%<award-group>
%<funding-source>
%<named-content content-type="funder-name"> 
%</named-content> 
%<named-content content-type="funder-identifier"> 
%</named-content>
%</funding-source>
%<award-id> </award-id>
%</award-group>
%</funding-group>

\title{Boosted scaled subgradient method for DC programming\thanks{Submitted to the editors 2020-12-21, 13:53.
\funding{The first author was supported in part by FAPEG/PRONEM- 201710267000532 and CNPq grants 305158/2014-7 and 302473/2017-3. The second author was supported in part by CAPES. The third author was supported in part by CNPq grant 424169/2018-5.}}}
\author{Orizon P. Ferreira\thanks{Instituto de Matem\'atica e Estat\'istica, Universidade Federal de Goi\'as, Goi\^ania, GO, Brazil (\email{orizon@ufg.br}).}
\and Elianderson M. Santos\thanks{Instituto de Matem\'atica e Estat\'istica, Universidade Federal de Goi\'as, Goi\^ania, GO, Brazil (\email{eliandersonsantos@discente.ufg.br}).}
\and Jo\~ao Carlos O. Souza\thanks{Department of Mathematics, Federal University of Piau\'{i}, Teresina, PI, Brazil (\email{joaocos.mat@ufpi.edu.br}).}
}
% Custom SIAM macro to insert headers
\headers{Boosted scaled subgradient method for DC programming}{O. P. Ferreira, E. M. Santos, and J. C. O. Souza}

\begin{document}

\maketitle

% REQUIRED
\begin{abstract}
The purpose of this paper is to present a boosted scaled subgradient-type method (BSSM) to minimize the difference of two convex functions (DC functions), where the first function is differentiable and the second one is possibly non-smooth. 
Although the objective function is in general non-smooth, under mild assumptions, the structure of the problem allows to prove that the negative scaled generalized subgradient at the current iterate is a descent direction from an auxiliary point. 
Therefore, instead of applying the Armijo linear search and computing the next iterate from the current iterate, both the linear search and the new iterate are computed from that auxiliary point along the direction of the negative scaled generalized subgradient. As a consequence, it is shown that the proposed method has similar asymptotic convergence properties and iteration-complexity bounds as the usual descent methods to minimize differentiable convex functions employing Armijo linear search. Finally, for a suitable scale matrix the quadratic subproblems of BSSM have a closed formula, and hence, the method has a better computational performance than classical DC algorithms which must solve a convex (not necessarily quadratic) subproblem.
\end{abstract}

% REQUIRED
\begin{keywords}
DC function, scaled subradient method, DC algorithm, Kurdyka-\L{}ojasiewicz property, location problem.
\end{keywords}

% REQUIRED
\begin{AMS}
65K05, 65K10, 90C26, 47N10
\end{AMS}

\section{Introduction}
%%%%%%%%%%%%%%%%%%%%%%%%%%%%%%%%%%
In the present paper, we are interested in the following unconstrained optimization problem
\begin{equation} \label{P1}
\min _{x \in {\mathbb R}^n} \phi(x):= f_{1}(x)-f_{2}(x),
\end{equation}
where $f_{1},f_{2}:\mathbb{R}^n \to \mathbb{R} $ are convex functions, $f_{1}$ is continuously differentiable and $f_{2}$ is possibly non-smooth. The Problem~\ref{P1} is known as DC problem, which is a special problem in non-convex and non-smooth optimization; see \cite{DCAFirst2018}. Indeed, although the functions $f_{1} $ and $f_{2}$ are convex and $f_{1}$ is differentiable, the function $\phi$ is in general non-convex and non-smooth. However, we can show that $\phi$ is locally Lipschitz continuous, and therefore we can use the machinery of \cite{clarke1983optimization} to deal with the Problem~\ref{P1}. It should be stressed that many applications considered in DC programming are stated as Problem~\ref{P1}, namely, the objective function is the difference of a smooth convex function and a non-smooth convex function, for instance, the minimum sum-of-squares
clustering problem \cite{ARAGON2019, CuongYaoYen2020, OrdiBagirov2015}, the bilevel hierarchical clustering problem \cite{NamGeremewReynoldsTran2018}, Clusterwise linear regression \cite{BagirovUgon2018}, the multicast network design problem \cite{GeremewNamSemenovBoginski2018}, and the multidimensional scaling problem \cite{LeTao2001, ARAGON2019} and Fermat-Weber location problem \cite{Brimberg1995, CruzNetoLopesSantosSouza2019}, see also \cite{BeckHallak2020}. In fact, the DC programming is a basic issue of non-smooth optimization, which appears very often in various areas. An extensive annotated bibliography of papers dealing with DC programming and its development can be found in the recent review \cite{DCAFirst2018}, which celebrates the 30th birthday of DC programming and the first algorithm to DC programming named DCA (DC algorithm). 

Many of problems considered in DC programming stated as Problem~\ref{P1} are high-dimensional. On the other hand, it is well known that high dimensional problems, the simplicity of the methods used to solve them are one of the most important factors to avoid numerical errors during their execution. For this reason, the simplicity and easy implementation of the first order methods has attracted the attention of the scientific community that works on continuous optimization over the years. These methods only require access to first-order derivative information providing stability from a numerical point of view, and therefore, quite suitable for solving high dimensional optimization problems, see for example \cite{Amir, bonettini2019recent, Nesterov2018} and the references quoted therein. In this sense, as Problem~\ref{P1} is generally non-smooth, the method that comes first in our mind for solving such a problem is a subgradient-type method. However, one of the disadvantages of subgradient-type methods, compared to gradient-type methods, is that in general the negative generalized subgradient is not necessarily a descent direction, see \cite[chap.~3]{Amir}. Consequently, for general non-smooth problems, linear search schemes can not be implemented for subgradient-type methods. It is worth noting that, in general, the computation of a descent direction for a non-smooth and non-convex function is one of the most difficult problems in Numerical Analysis due to the NP-hardness of this problem; see \cite[Lemma~1]{Nesterov2013}. Although the objective function in Problem~\ref{P1} is non-smooth, its particular structure allows to show that under mild conditions any negative non-null generalized subgradient is a descent direction. This remarkable fact which in particular up the possibility of applying linear search schemes will be extensively explored in the present paper.

The aim of this paper is to present a boosted scaled subgradient-type method (BSSM) to solve Problem~\ref{P1}. The proposed conceptual method is stated as follows. At each current iteration $x^k$, we compute $w^k\in \partial f_{2}(x^k)$ and select a scale positive definite matrix $H_k$ and then we move along the direction of the negative scaled generalized subgradient $\nabla f_1(x^k)-w^k$ with step size $\beta_k>0$ to compute an auxiliary point $y^k$, as 
\begin{equation} \label{eq:yki}
y^k:=x^k-\beta_k H_k^{-1}\big(\nabla f_1(x^k)-w^k\big).
\end{equation}
Under suitable assumptions on the functions $ f_{1}$ and $f_{2}$, we can show that $d^k:=-\beta_k H_k^{-1}(\nabla f_1(x^k)-w^k)$ is a descent direction from $y^k$. Then, instead of applying the Armijo linear search from the current iterate $x^k$, the linear search is implemented from $y^k$ to compute a step size $\lambda_k$ which defines the boosted iteration $x^{k+1}=y^k+\lambda_kd^k$. In fact, by using \eqref{eq:yki}, the next iterate $x^{k+1}$ is equivalently written as 
\begin{equation} \label{eq:xk1i}
x^{k+1}= x^k-(1+\lambda_k)\beta_kH_k^{-1}\big(\nabla f_1(x^k)-w^k\big).
\end{equation}
It is shown that the proposed method \eqref{eq:xk1i} has similar asymptotic convergence properties and iteration-complexity bounds as the usual scaled gradient method to minimize differentiable convex functions employing linear search. It is worth mentioning that the performance of BSSM is strongly related to the choice of $H_k$, $\beta_k$ and $\lambda_k$. More details about selecting scale matrices and step sizes can be found in the recent review \cite{bonettini2019recent} and the references quoted therein.

As aforementioned, the DCA was the first algorithm to DC programming, see \cite{TaoLe1997,Pham1986}. Since this seminal algorithm, several variants of DCA have arisen and several theoretical and practical issues have been discovered over the years, resulting in a wide literature on the subject; see \cite{DCAFirst2018,HorstThoa1999} for a historical perspective. In the last few years the number of papers dealing with problems of type \eqref{P1} and proposing new algorithms to solve them has grown, including subgradient-type, proximal-subgradient, proximal bundle, double bundle, codifferential and inertial method; see \cite{BagirovUgon2011,BeckHallak2020,CruzNetoEtAl2018,Welington2019,WelingtonTcheou2019, KaisaBagirov2018,KhamaruWainwright2019,ZhaosongZhouSun2019,Moudafi2006,SOUZA2016,SunSampaio2003}. In particular, it was proposed in \cite{ARAGON2017, ARAGON2019} a boosted DC algorithm (BDCA) for solving Problem~\ref{P1}, which accelerates the convergence of DCA. In \cite{ARAGON2017}, BDCA is applied to Problem~\ref{P1} when both functions $f_1$ and $f_2$ are differentiable, and in \cite{ARAGON2019} when the function $f_1$ is differentiable and $f_2$ is possibly non-smooth. In a sense, the design of the method \eqref{eq:xk1i} was inspired by BDCA in order to accelerated the scaled subgradient method. It is worth mentioning that for suitable scale matrices, the quadratic subproblems of BSSM have a closed formula \eqref{eq:xk1i}. For this reason, it is expected that for the class of the Problem~\ref {P1}, BSSM has a better computational performance than the classical DC algorithms, which in general must solve a convex (not necessarily quadratic) subproblem.

This paper is organized as follows. In Section~\ref{sec:int.1}, we present some notation and basic results used throughout the paper. In  Setion~\ref{sec:probl} we present the problem and the assumptions used throughout  the paper.  Section   \ref{se:bssm} is devoted to describe the BSSM, its well definition and some results related to the asymptotic convergence. In Sec tion~\ref{sec:complexity} is presented some iteration-complexity bounds. In Section~\ref{Sec:KL} we establish the full convergence for the sequence generated by BSSM under the Kurdyka-\L{}ojasiewicz property and convergence rates for its functional values. In Section~\ref{Sec:BLCP} a version of BSSM for solving linearly constrained DC programming is presented, an asymptotic convergence result is stablished, and application in quadratic programming is provided. Some numerical experiments are provided in Section~\ref{Sec:NumExp}. We conclude the paper with some remarks in Section~\ref{Sec:Conclusions}.
 
%%%%%%%%%%%%%%%%%%%%%%%%%%%%%%%%%%%%%%%%%%%%%%%%%%%%%%%%%%
\section{Preliminaries} \label{sec:int.1}
In this section we present some notations, definitions, and results that will be used throughout the paper, which  can be found  \cite{Amir, clarke1983optimization, Lemarechal, Mordukhovich2018}.
%%%%%%%%%%%%%%%%%%%%%%%%%%%%%%%%%
\begin{definition}[{\cite[Definition 1.1.1, p. 144, and Proposition 1.1.2, p. 145]{Lemarechal}}] \label{def:cssf}
A function $f:\mathbb{R}^{n}\to \mathbb{R} $ is said to be convex if $f(\lambda x + (1-\lambda)y)\leq \lambda f(x) + (1-\lambda) f(y)$, for all $x,y\in {\mathbb{R}^{n}}$ and $\lambda \in [0,1]$. We say that $f$ is strictly convex when last inequality is strict for $x\neq y$. Moreover, $f$ is said to be strongly convex with modulus $\sigma>0$ if $f- (\sigma/2)\|\cdot\|^2$ is convex.
\end{definition} 
\begin{definition}[{\cite[p. 25]{clarke1983optimization}}]
We say that $f:\mathbb{R}^{n}\to\mathbb{R}$ is locally Lipschitz if, for all $x\in \mathbb{R}^{n}$, there exist a constant $K_{x}>0$ and a neighborhood $U_{x}$ of $x$ such that $|f(x)-f(y)|\leq K_{x}\|x-y\|$, for all $y\in U_{x}.$
\end{definition}
If $f:\mathbb{R}^{n}\to \mathbb{R}$ is convex, then $f$ is locally Lipschitz; see \cite[p. 34]{clarke1983optimization}.
\begin{definition}[{\cite[p. 27]{clarke1983optimization}}] \label{def:dd}
Let $f:\mathbb{R}^{n}\to\mathbb{R}$ be a locally Lipschitz function. The Clarke's subdifferential of $f$ at $x\in \mathbb{R}^{n}$ is given by $\partial _{c} f(x)=\{v \in \mathbb{R}^{n}\:|\: f^{\circ }(x;d)\geq \langle v,d \rangle, ~
\forall d \in \mathbb{R}^{n} \},$
where $f^{\circ }(x;d)$ is the generalized directional derivative of $f$ at $x$ in the direction $d$ given by
$$ f^{\circ }(x;d)= 
\limsup _{ 
\tiny{\begin{array}{c}
u\rightarrow x\\
t\downarrow 0
\end{array}}} \frac{ f(u+td)-f(u) }{t}.$$ 
\end{definition}
If $f$ is convex, then $\partial _{c}f(x)$ coincides with the subdifferential $\partial f(x)$ in the sense of convex analysis, and $f^{\circ }(x;d)$ coincides with the usual directional derivative $f'(x;d)$; see {\cite[p. 36]{clarke1983optimization}}. We recall that if $f:\mathbb{R}^{n}\to \mathbb{R} $ is continuously differentiable, then $\partial _{c} f(x)=\{\nabla f(x)\}$ for any $x\in \mathbb{R}^{n}$. 
\begin{theorem}[{\cite[Proposition 2.1.2, p. 27]{clarke1983optimization}}] \label{th:cdd}
Let $f:\mathbb{R}^{n}\to\mathbb{R}$ be a locally Lipschitz function. Then, for all $x\in \mathbb{R}^{n}$, there hold:
\begin{itemize}
\item[(i)] $\partial _{c}f(x)$ is a non-empty, convex, compact subset of $\mathbb{R}^{n}$ and $\|v\|\leq K_{x},$ for all $v\in \partial _{c}f(x)$, where $K_{x}>0$ is the Lipschitz constant of $f$ around $x$;
\item[(ii)] $f^{\circ}(x;d) = \max \{ \langle v,d \rangle :\; v\in \partial _{c}f(x)\} $.
\end{itemize}
\end{theorem}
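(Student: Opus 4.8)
The plan is to isolate the map $d \mapsto p(d):=f^{\circ}(x;d)$, establish that it is a finite sublinear function, and then read off both parts from the standard duality between sublinear functions and their support sets. All of the analytic content sits in the properties of $p$; after that, (i) and (ii) become consequences of elementary convex analysis together with a single Hahn--Banach argument.

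First I would record three properties of $p$. Positive homogeneity, $p(\lambda d)=\lambda p(d)$ for $\lambda\ge 0$, follows from the substitution $s=\lambda t$ in the $\limsup$ defining $f^{\circ}$. Subadditivity, $p(d_1+d_2)\le p(d_1)+p(d_2)$, follows by writing $f(u+td_1+td_2)-f(u)$ as the telescoping sum $[f(u+td_1+td_2)-f(u+td_1)]+[f(u+td_1)-f(u)]$, bounding the $\limsup$ of a sum by the sum of $\limsup$s, and observing that the inner base point $u+td_1\to x$ as $u\to x$, $t\downarrow 0$, so the first term has $\limsup$ bounded by $p(d_2)$. Finally, since $f$ is $K_x$-Lipschitz on a neighborhood of $x$, for $u$ close to $x$ and $t$ small both $u$ and $u+td$ lie in that neighborhood, whence $|f(u+td)-f(u)|\le K_x t\|d\|$; dividing by $t$ and passing to the $\limsup$ gives $|p(d)|\le K_x\|d\|$. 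In particular $p$ is finite everywhere, and being sublinear it is therefore convex and continuous.

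With these in hand, $\partial_c f(x)=\{v:\langle v,d\rangle\le p(d),\ \forall d\}$ is an intersection of closed half-spaces, so it is closed and convex; and since every $v$ in it satisfies $\langle v,d\rangle\le K_x\|d\|$ for all $d$, taking $d=v$ yields $\|v\|^2\le K_x\|v\|$, hence $\|v\|\le K_x$. Thus the set is bounded, so compact, and the norm bound in (i) holds. For non-emptiness I would invoke Hahn--Banach: fix any $d_0$, define on the line $\mathbb{R}d_0$ the linear functional $\ell(\lambda d_0)=\lambda p(d_0)$, which is dominated by the sublinear $p$, and extend it to a linear $v$ on $\mathbb{R}^n$ with $\langle v,d\rangle\le p(d)$ for all $d$; this $v$ lies in $\partial_c f(x)$, and moreover $\langle v,d_0\rangle=p(d_0)$.

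Part (ii) then follows at once: by the definition of $\partial_c f(x)$ we have $\langle v,d\rangle\le p(d)$ for every $v$ in the set, so $\max\{\langle v,d\rangle:v\in\partial_c f(x)\}\le p(d)$; and the Hahn--Banach construction run with $d_0=d$ produces a point of $\partial_c f(x)$ attaining $\langle v,d\rangle=p(d)$, which gives the reverse inequality and shows the supremum is in fact attained. I expect the main obstacle to be precisely the non-emptiness and attainment, i.e. the Hahn--Banach step: it is what turns the purely outer description of $\partial_c f(x)$ as an intersection of half-spaces into a genuinely non-trivial support-realizing set. The remaining items are routine $\limsup$ estimates and standard consequences of sublinearity.
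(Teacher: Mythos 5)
This theorem is background material that the paper imports verbatim from Clarke (Proposition 2.1.2) and does not prove, so there is no in-paper argument to compare against; your proof is correct and is essentially the classical one from the cited source: show $p=f^{\circ}(x;\cdot)$ is finite, positively homogeneous, subadditive and bounded by $K_x\|\cdot\|$, then obtain (i) and (ii) from support-function duality, with Hahn--Banach supplying non-emptiness and attainment. The only step you leave implicit is the domination of your linear functional $\ell$ by $p$ for \emph{negative} multiples of $d_0$, which needs $p(d_0)+p(-d_0)\geq p(0)=0$; this follows at once from subadditivity, so the argument is complete.
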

\begin{theorem}[{\cite[Proposition 2.3.1, p. 38, and Corollary 1, p. 39]{clarke1983optimization}}]\label{subdif_DC}
Let $f_{1},f_{2}:\mathbb{R}^n \to \mathbb{R} $ be convex functions and $f_{1}$ be differentiable. Then, for every $x,d\in \mathbb{R}^{n},$ we have
\begin{itemize}
\item[(i)] $(f_{1}-f_{2})^{\circ}(x;d)=\langle \nabla f_1(x), d\rangle -f_{2}'(x;d)$;
\item[(ii)] $\partial _{c}(f_{1}-f_{2})(x) =\{\nabla f_1(x)\}-\partial f_{2}(x)$.
\end{itemize}
\end{theorem}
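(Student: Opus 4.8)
The plan is to prove the two items by separate routes, both resting on the observation that a finite convex function on $\mathbb{R}^n$ that is differentiable everywhere is automatically continuously differentiable, and hence strictly differentiable in Clarke's sense. Consequently $\partial_c f_1(x)=\{\nabla f_1(x)\}$ and $f_1^\circ(x;d)=\langle\nabla f_1(x),d\rangle$ for every $d$, while convexity of $f_2$ guarantees both that the one-sided directional derivative $f_2'(x;d)=\lim_{t\downarrow 0}\big(f_2(x+td)-f_2(x)\big)/t$ exists for every $d$ and that $\partial_c f_2(x)=\partial f_2(x)$.

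For item (i) I would work straight from the definition of the one-sided directional derivative of $\phi:=f_1-f_2$, splitting the difference quotient as
\begin{equation*}
\frac{\phi(x+td)-\phi(x)}{t}=\frac{f_1(x+td)-f_1(x)}{t}-\frac{f_2(x+td)-f_2(x)}{t}.
\end{equation*}
Letting $t\downarrow 0$, the first quotient tends to $\langle\nabla f_1(x),d\rangle$ by differentiability of $f_1$ and the second to $f_2'(x;d)$ by convexity of $f_2$; since both limits exist, the limit of the difference is the difference of the limits, which is exactly $\langle\nabla f_1(x),d\rangle-f_2'(x;d)$. The DC structure is what makes $\phi$ genuinely directionally differentiable here, so no subadditivity argument is needed for this part.

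For item (ii) I would build $\partial_c\phi(x)$, with $\phi=f_1+(-f_2)$, from two calculus rules. The first is the negation rule $\partial_c(-f_2)(x)=-\partial_c f_2(x)=-\partial f_2(x)$, which I would derive from the elementary identity $(-f_2)^\circ(x;d)=f_2^\circ(x;-d)$ (immediate from the $\limsup$ definition after replacing $d$ by $-d$) together with \ref{def:dd}: indeed $v\in\partial_c(-f_2)(x)$ iff $f_2^\circ(x;-d)\ge\langle v,d\rangle$ for all $d$, iff (substituting $d\mapsto -d$) $f_2^\circ(x;d)\ge\langle -v,d\rangle$ for all $d$, iff $-v\in\partial_c f_2(x)$. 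The second is the exact sum rule $\partial_c(f_1+(-f_2))(x)=\{\nabla f_1(x)\}+\partial_c(-f_2)(x)$; combining the two yields $\partial_c\phi(x)=\{\nabla f_1(x)\}-\partial f_2(x)$.

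The step I expect to be the crux is the reverse inclusion in that sum rule, since subadditivity of the generalized directional derivative only delivers $\phi^\circ(x;d)\le\langle\nabla f_1(x),d\rangle+(-f_2)^\circ(x;d)$ and hence one inclusion. To force equality I would use strict differentiability of $f_1$, which makes $(-f_1)^\circ(x;d)=f_1^\circ(x;-d)=-\langle\nabla f_1(x),d\rangle$ linear; applying subadditivity to $(-f_2)=\phi+(-f_1)$ then gives $(-f_2)^\circ(x;d)\le\phi^\circ(x;d)-\langle\nabla f_1(x),d\rangle$, the opposite inequality. Thus $\phi^\circ(x;d)=\langle\nabla f_1(x),d\rangle+(-f_2)^\circ(x;d)$ for every $d$. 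By \ref{th:cdd}(ii) the right-hand side is the support function of the compact convex set $\{\nabla f_1(x)\}+\partial_c(-f_2)(x)$ and the left-hand side that of $\partial_c\phi(x)$; since a compact convex set is determined by its support function, the two sets coincide, and the negation rule turns this into the asserted identity.
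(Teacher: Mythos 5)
The paper itself offers no proof of this theorem: it is quoted from Clarke's book as a preliminary, so your attempt can only be judged against the statement and the standard arguments. Your proof of item (ii) is correct and complete. The negation identity $(-f_2)^\circ(x;d)=f_2^\circ(x;-d)$, the inclusion $\partial_c(f_1-f_2)(x)\subseteq\{\nabla f_1(x)\}+\partial_c(-f_2)(x)$ from subadditivity, the reverse inequality obtained by applying subadditivity to the decomposition $-f_2=(f_1-f_2)+(-f_1)$ (exactly where the smoothness of $f_1$ enters), and the identification of the two compact convex sets through their support functions via item (ii) of Theorem~\ref{th:cdd} are all sound; this is essentially the textbook route to the exact sum rule behind the cited corollary.

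Item (i) is where there is a genuine gap, and it is not one you can close, because the statement as printed is false. Your difference-quotient splitting, anchored at the fixed point $x$, proves the identity for the ordinary one-sided directional derivative, $(f_1-f_2)'(x;d)=\langle\nabla f_1(x),d\rangle-f_2'(x;d)$; it says nothing about the $\limsup$ over moving base points $u\to x$ that defines $(f_1-f_2)^\circ(x;d)$ in Definition~\ref{def:dd}, and it is the latter that appears in (i). The two quantities genuinely differ here: take $n=1$, $f_1\equiv 0$, $f_2=|\cdot|$, $x=0$, $d=1$, so that $\phi=-|\cdot|$; choosing $u=-t$ with $t\downarrow 0$ gives $\phi^\circ(0;1)=1$, while $\langle\nabla f_1(0),1\rangle-f_2'(0;1)=-1$. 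In fact your own item (ii), combined with item (ii) of Theorem~\ref{th:cdd}, forces $\phi^\circ(x;d)=\max\{\langle\nabla f_1(x)-w,d\rangle:\,w\in\partial f_2(x)\}=\langle\nabla f_1(x),d\rangle+f_2'(x;-d)$, which coincides with the right-hand side of (i) only when $\langle\cdot,d\rangle$ is constant on $\partial f_2(x)$; so the two items you set out to prove contradict each other precisely when $f_2$ is nonsmooth at $x$. What you actually proved --- the version with the ordinary derivative $(f_1-f_2)'$ in place of $(f_1-f_2)^\circ$ --- is true and is presumably what the authors intended, and the paper's later use of (i) in Proposition~\ref{prop15u} survives either reading, since the estimate there is taken over all $v\in\partial h(y^k)$. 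But a proof attempt must flag that it establishes a different claim from the displayed one, rather than silently substituting $(f_1-f_2)'$ for $(f_1-f_2)^\circ$.
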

\begin{proposition}[{\cite[Proposition 6.2.1, p. 282]{Lemarechal}}]
Let $f:\mathbb{R}^{n}\rightarrow \mathbb{R}$ be convex. The mapping $\partial f$ is locally bounded, i.e. the image of $\partial f(B)$ of a bounded set $B\subset \mathbb{R}^{n}$ is a bounded set in $\mathbb{R}^{n}$.
\end{proposition}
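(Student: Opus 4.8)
The plan is to reduce the statement to a \emph{uniform} Lipschitz bound for $f$ on a bounded set containing $B$, after which the subgradient bound is read off directly from Theorem~\ref{th:cdd}(i). Since $B$ is bounded, fix $r>0$ such that $B$ is contained in the open ball $B(0,r)$ centered at the origin. As $f$ is convex on $\mathbb{R}^n$ it is continuous (being locally Lipschitz), hence bounded on the compact ball $\bar B(0,2r)$; let $M>0$ satisfy $|f(x)|\le M$ for all $x\in\bar B(0,2r)$.

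The first main step is to upgrade this pointwise bound into a Lipschitz estimate valid on all of $\bar B(0,r)$ with a single constant. Given distinct $x,y\in\bar B(0,r)$, I would set $u=(y-x)/\|y-x\|$ and $z=y+ru$, so that $\|z\|\le\|y\|+r\le 2r$, i.e.\ $z\in\bar B(0,2r)$; a short computation shows $y=(1-\lambda)x+\lambda z$ with $\lambda=\|y-x\|/(\|y-x\|+r)\in(0,1)$. Convexity then yields $f(y)-f(x)\le\lambda\big(f(z)-f(x)\big)\le 2M\lambda\le (2M/r)\,\|y-x\|$, and interchanging the roles of $x$ and $y$ gives $|f(x)-f(y)|\le (2M/r)\,\|x-y\|$. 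Hence $f$ is Lipschitz on $\bar B(0,r)$ with constant $K:=2M/r$, and this $K$ is a Lipschitz constant of $f$ on the neighborhood $B(0,r)$ of every point $x\in B$.

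The concluding step is then immediate. Because $f$ is convex, $\partial f(x)=\partial_c f(x)$ for every $x$, so Theorem~\ref{th:cdd}(i) gives $\|v\|\le K$ for all $v\in\partial f(x)$ and all $x\in B$. Consequently $\partial f(B)=\bigcup_{x\in B}\partial f(x)\subseteq\bar B(0,K)$ is bounded, as claimed.

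I expect the only genuine obstacle to be the convexity-to-Lipschitz estimate of the second paragraph: choosing the auxiliary point $z$ so that it remains in the ball where $f$ is controlled, and verifying the convex-combination coefficients; everything else is bookkeeping. A slicker alternative that sidesteps the explicit geometry is a contradiction argument: if $\partial f(B)$ were unbounded, pick $x^k\in B$ and $v^k\in\partial f(x^k)$ with $\|v^k\|\to\infty$ and $v^k\ne 0$; the subgradient inequality at $y^k:=x^k+v^k/\|v^k\|\in\bar B(0,r+1)$ gives $f(y^k)\ge f(x^k)+\|v^k\|$, which together with the boundedness of $f$ on the compact set $\bar B(0,r+1)$ forces $\|v^k\|$ to remain bounded, a contradiction. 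Either route reduces the proposition to the same fact that a convex function is bounded on compact sets.
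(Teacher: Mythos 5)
Your proof is correct, but there is nothing in the paper to compare it against: the paper states this proposition as a quoted result from the literature (Proposition 6.2.1 of Hiriart-Urruty and Lemar\'{e}chal) and never proves it, using it only as an ingredient, together with the closedness of the graph of $\partial f$, to deduce Proposition~\ref{cont_subdif}. Judged on its own merits, your main argument is complete: the convex-combination identity $y=(1-\lambda)x+\lambda z$ with $z=y+r(y-x)/\|y-x\|$ and $\lambda=\|y-x\|/(\|y-x\|+r)$ is verified by direct substitution, the bound $\lambda\le\|y-x\|/r$ yields the uniform Lipschitz constant $K=2M/r$ on $\bar{B}(0,r)$, and since $\partial f=\partial_c f$ for convex $f$, Theorem~\ref{th:cdd}(i) converts this Lipschitz constant into the bound $\|v\|\le K$ for every $v\in\partial f(x)$ with $x\in B$. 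Your ``slicker alternative'' is in fact essentially the proof given in the cited reference: for $v\in\partial f(x)$, $v\neq 0$, the subgradient inequality tested at $x+v/\|v\|$ gives $\|v\|\le f(x+v/\|v\|)-f(x)$, and boundedness of the convex function $f$ on the bounded set $B+\bar{B}(0,1)$ bounds the right-hand side. The contradiction route is shorter because it needs only that convex functions are bounded on bounded sets, not a uniform Lipschitz estimate, while your first route has the side benefit of re-proving the local Lipschitz property with an explicit constant and of connecting directly to the Clarke-subdifferential bound already available in the paper's preliminaries.
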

\begin{proposition}[{\cite[Proposition 6.2.2, p. 282]{Lemarechal}}]
Let $f:\mathbb{R}^{n}\rightarrow \mathbb{R}$ be convex. The graph of its subdifferential mapping $\partial f$ is closed in $\mathbb{R}^{n}\times\mathbb{R}^{n}$.
\end{proposition}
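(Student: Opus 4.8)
The plan is to establish closedness via the sequential characterization: since $\mathbb{R}^n\times\mathbb{R}^n$ is a metric space, it suffices to show that whenever a sequence $(x^k,v^k)$ lying in the graph of $\partial f$ converges to some $(\bar x,\bar v)$, the limit point $(\bar x,\bar v)$ again belongs to the graph, that is, $\bar v\in\partial f(\bar x)$. Recall that the graph is the set $\{(x,v)\in\mathbb{R}^n\times\mathbb{R}^n : v\in\partial f(x)\}$.

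First I would write out what membership in the graph means for each term of the sequence. Since $v^k\in\partial f(x^k)$ and $f$ is convex, the defining subgradient inequality gives
\[
f(y)\geq f(x^k)+\langle v^k,\,y-x^k\rangle \qquad \text{for all } y\in\mathbb{R}^n \text{ and all } k.
\]
The core of the argument is then to pass to the limit $k\to\infty$ in this inequality with $y$ held fixed. The right-hand side splits into the value $f(x^k)$ and the affine term $\langle v^k,\,y-x^k\rangle$. The affine term converges to $\langle \bar v,\,y-\bar x\rangle$, because $v^k\to\bar v$ and $x^k\to\bar x$ and the inner product is jointly continuous. For the term $f(x^k)$ I would invoke the continuity of $f$: a finite-valued convex function $f:\mathbb{R}^n\to\mathbb{R}$ defined on the whole space is locally Lipschitz, as recalled just after the definition of local Lipschitz continuity above, hence continuous, so $f(x^k)\to f(\bar x)$. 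Taking the limit therefore yields
\[
f(y)\geq f(\bar x)+\langle \bar v,\,y-\bar x\rangle \qquad \text{for all } y\in\mathbb{R}^n,
\]
which is precisely the statement $\bar v\in\partial f(\bar x)$, so $(\bar x,\bar v)$ lies in the graph and the proof is complete.

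I do not anticipate a serious obstacle, as the argument is a routine limiting procedure; the only point requiring attention is the legitimacy of passing $f(x^k)\to f(\bar x)$, which is guaranteed precisely because $f$ is finite-valued and convex on all of $\mathbb{R}^n$ and hence continuous. Were $f$ merely convex on a proper subset, one would have to argue carefully about continuity near the boundary, but that complication does not arise under the stated hypotheses.
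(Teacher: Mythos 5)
Your proof is correct: the paper states this proposition without proof (citing Hiriart-Urruty and Lemar\'echal), and your argument---passing to the limit in the subgradient inequality $f(y)\geq f(x^k)+\langle v^k, y-x^k\rangle$ using the continuity of the finite-valued convex function $f$---is precisely the standard argument given in that reference. The one delicate point, justifying $f(x^k)\to f(\bar x)$ via local Lipschitz continuity of finite convex functions on $\mathbb{R}^n$, is handled correctly.
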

\noindent As a consequence, of the two propositions above, we have the following useful result.
\begin{proposition}\label{cont_subdif} Let $f:\mathbb{R}^{n}\rightarrow \mathbb{R}$ be convex and $(x^{k})_{k\in\mathbb{N}}$ such that ~$\lim _{k\rightarrow\infty}x^{k}=x^{*}$. If $(y^{k})_{k\in\mathbb{N}}$ is a sequence such that $y^{k}\in \partial f(x^{k})$ for every $k\in \mathbb{N}$, then $(y^{k})_{k\in\mathbb{N}}$ is bounded and its cluster points belongs to $\partial f(x^{*}).$
\end{proposition}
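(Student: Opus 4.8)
The plan is to obtain both conclusions directly from the two propositions immediately preceding the statement. First I would prove that $(y^k)_{k\in\mathbb{N}}$ is bounded. Since $x^k\to x^*$, the set $B=\{x^k : k\in\mathbb{N}\}\cup\{x^*\}$ is bounded in $\mathbb{R}^n$. By the local boundedness of the subdifferential mapping (the first of the two preceding propositions), the image $\partial f(B)$ is a bounded subset of $\mathbb{R}^n$. As $y^k\in\partial f(x^k)\subseteq\partial f(B)$ for every $k$, the whole sequence $(y^k)$ lies in a bounded set and is therefore bounded.

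Next I would describe the cluster points. Being bounded, $(y^k)$ has at least one cluster point by Bolzano--Weierstrass; let $y^*$ be any such point and choose a subsequence $(y^{k_j})_{j\in\mathbb{N}}$ with $y^{k_j}\to y^*$. Along this subsequence we still have $x^{k_j}\to x^*$, because convergence of the full sequence $(x^k)$ passes to every subsequence, and $y^{k_j}\in\partial f(x^{k_j})$ for all $j$. Thus the pairs $(x^{k_j},y^{k_j})$ lie in the graph of $\partial f$ and converge to $(x^*,y^*)$. Invoking the closedness of the graph of $\partial f$ (the second of the two preceding propositions), we conclude that $(x^*,y^*)$ belongs to that graph, i.e. $y^*\in\partial f(x^*)$.

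There is no genuine difficulty in this argument, since the analytic content is carried entirely by the two cited propositions: local boundedness yields the first claim and graph-closedness yields the second. The only step that warrants a little attention is that the second claim concerns cluster points rather than a limit, so one must first extract an arbitrary convergent subsequence of $(y^k)$ and verify that the associated arguments $x^{k_j}$ still converge to $x^*$ before the closed-graph property can be applied.
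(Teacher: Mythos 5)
Your proof is correct and follows exactly the route the paper intends: the paper states this proposition as an immediate consequence of the two preceding results (local boundedness of $\partial f$ for the boundedness of $(y^{k})_{k\in\mathbb{N}}$, and closedness of the graph of $\partial f$ for the cluster points), which is precisely the argument you have written out in detail.
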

\begin{theorem}[{\cite[Theorem 5.25, p. 122 and Corollary 3.68, p. 76]{Amir}}] \label{th:umsc}
Let $f:\mathbb{R}^{n}\to \mathbb{R} $ be a differentiable and strongly convex function and $C \subset \mathbb{R}^n$ be a closed and convex. Then, $f$ has a unique minimizer $x^{*}\in C$ characterized by $\left\langle \nabla f(x^{*}),x-x^{*} \right\rangle \geq 0$, for all $x\in C$.
\end{theorem}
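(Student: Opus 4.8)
The plan is to establish the three assertions—existence, uniqueness, and the variational characterization—separately, in that order, since each rests on a different feature of the hypotheses.

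First I would prove existence by exploiting coercivity, which is the crux of the argument and the only place where strong convexity is used in an essential way. Fix any $\bar{x}\in C$. Since $f$ is differentiable and strongly convex with some modulus $\sigma>0$ (Definition~\ref{def:cssf}), applying the gradient inequality to the convex function $f-(\sigma/2)\|\cdot\|^2$ and rearranging yields
\begin{equation*}
f(x)\geq f(\bar{x})+\langle \nabla f(\bar{x}),x-\bar{x}\rangle+\frac{\sigma}{2}\|x-\bar{x}\|^2,\qquad \text{for all } x\in\mathbb{R}^n.
\end{equation*}
The right-hand side tends to $+\infty$ as $\|x\|\to\infty$, so the sublevel set $L:=\{x\in C:\ f(x)\leq f(\bar{x})\}$ is bounded; it is also closed, because $C$ is closed and $f$ is continuous, hence $L$ is nonempty and compact. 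By the Weierstrass theorem $f$ attains a minimum over $L$ at some $x^{*}\in C$, and since every $x\in C\setminus L$ satisfies $f(x)>f(\bar{x})\geq f(x^{*})$, the point $x^{*}$ in fact minimizes $f$ over all of $C$.

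Next I would settle uniqueness. Strong convexity implies strict convexity, so if $x^{*}$ and $y^{*}$ were two distinct minimizers, convexity of $C$ would place the midpoint $(x^{*}+y^{*})/2$ in $C$, while strict convexity would force $f\bigl((x^{*}+y^{*})/2\bigr)<\tfrac{1}{2}f(x^{*})+\tfrac{1}{2}f(y^{*})=\min_{C}f$, a contradiction; hence the minimizer is unique.

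Finally I would derive the characterization. For necessity, given the minimizer $x^{*}$ and any $x\in C$, convexity of $C$ ensures $x^{*}+t(x-x^{*})\in C$ for $t\in(0,1]$, so optimality gives $[f(x^{*}+t(x-x^{*}))-f(x^{*})]/t\geq 0$; letting $t\downarrow 0$ yields $\langle \nabla f(x^{*}),x-x^{*}\rangle=f'(x^{*};x-x^{*})\geq 0$. For sufficiency, if $x^{*}$ satisfies this inequality for every $x\in C$, then the gradient inequality for the convex function $f$ gives $f(x)\geq f(x^{*})+\langle \nabla f(x^{*}),x-x^{*}\rangle\geq f(x^{*})$, so $x^{*}$ is a minimizer. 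I expect existence to be the main obstacle, as it is the only step demanding a compactness argument and the coercivity estimate above; uniqueness and the characterization are then routine consequences of strict convexity and the first-order conditions.
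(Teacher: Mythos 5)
Your proof is correct: the coercivity estimate, the compactness/Weierstrass argument, the strict-convexity uniqueness step, and both directions of the variational characterization are all sound and complete. Note, however, that the paper itself offers no proof of this statement---it is imported as a background result from Beck's textbook (Theorem 5.25 and Corollary 3.68 of the cited reference), and your argument is essentially the standard one given there, so there is nothing in the paper to diverge from.
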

\begin{lemma}[{\cite[Lemma 5.20, p. 119]{Amir}}]\label{lema_soma}
Let $f:\mathbb{R}^{n}\to \mathbb{R} $ be a strongly convex function with modulus $\sigma>0$, and
let $\bar{f}:\mathbb{R}^{n}\to \mathbb{R}$ be convex. Then $f + \bar{f}$ is strongly convex function with modulus $\sigma>0$.
\end{lemma}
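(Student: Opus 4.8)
The plan is to argue directly from the definition of strong convexity recorded in Definition~\ref{def:cssf}, according to which a function $g:\mathbb{R}^n\to\mathbb{R}$ is strongly convex with modulus $\sigma>0$ exactly when $g-(\sigma/2)\|\cdot\|^2$ is convex. First I would translate both the hypothesis and the desired conclusion into this language. By assumption $f$ is strongly convex with modulus $\sigma$, so $f-(\sigma/2)\|\cdot\|^2$ is convex; and $\bar f$ is convex. The goal is to show that $f+\bar f$ is strongly convex with the \emph{same} modulus $\sigma$, i.e.\ that $(f+\bar f)-(\sigma/2)\|\cdot\|^2$ is convex.

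The key step is the trivial algebraic regrouping
\[
(f+\bar f)-\frac{\sigma}{2}\|\cdot\|^2=\Big(f-\frac{\sigma}{2}\|\cdot\|^2\Big)+\bar f,
\]
which isolates the strongly convex part of $f$ from the extra convex term. The first summand on the right-hand side is convex by the hypothesis on $f$, and the second summand $\bar f$ is convex by assumption.

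To finish, I would invoke the elementary stability of convexity under addition: the sum of two convex functions is convex. This itself follows in one line by writing the defining inequality of Definition~\ref{def:cssf} for each of the two convex summands at an arbitrary pair $x,y$ and parameter $\lambda\in[0,1]$ and adding them. Applying this to the two convex functions appearing on the right-hand side above shows that $(f+\bar f)-(\sigma/2)\|\cdot\|^2$ is convex, which is precisely the statement that $f+\bar f$ is strongly convex with modulus $\sigma$.

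I do not expect any genuine obstacle here: the result is an immediate consequence of the definition of strong convexity together with the fact that convexity is preserved under addition, and the only mildly delicate point is simply to note that the modulus is preserved exactly (and not merely bounded below) because the entire $(\sigma/2)\|\cdot\|^2$ term is absorbed into the already-strongly-convex function $f$.
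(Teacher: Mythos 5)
Your proof is correct, and it is exactly the standard argument: the paper itself gives no proof of Lemma~\ref{lema_soma} (it simply cites Beck's Lemma 5.20), and the intended justification is precisely your one-line regrouping $(f+\bar f)-\frac{\sigma}{2}\|\cdot\|^2=\bigl(f-\frac{\sigma}{2}\|\cdot\|^2\bigr)+\bar f$ combined with closedness of convexity under addition, using the characterization of strong convexity in Definition~\ref{def:cssf}. Nothing is missing; the modulus is preserved exactly for the reason you state.
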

\begin{theorem}[{\cite[Theorem 5.24, p. 119]{Amir}}]\label{teo2} The following statements are equivalent
\begin{enumerate}
\item[(i)] $f:\mathbb{R}^{n}\to \mathbb{R} $ is a strongly convex function with modulus $\sigma>0$.
\item[(ii)] $f(y)\geq f(x) + \langle v, y-x \rangle + ({\sigma}/{2}) \| y-x\Vert ^{2}$, for all $x,y\in \mathbb{R} ^{n} $ and all $v\in \partial f(x)$.
\item[(iii)] $\langle w-v,x-y \rangle \geq \sigma \| y-x\Vert ^{2}$, for all $x,y\in \mathbb{R} ^{n}$, all $w\in \partial f(x)$ and all $v\in \partial f(y).$
\end{enumerate}
\end{theorem}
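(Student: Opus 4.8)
The plan is to reduce all three statements to a single convexity statement about the shifted function $g:=f-(\sigma/2)\|\cdot\|^2$, which by Definition~\ref{def:cssf} is exactly what~(i) asserts. Since the quadratic $q(x):=(\sigma/2)\|x\|^2$ is differentiable with $\nabla q(x)=\sigma x$, the sum rule for subdifferentials yields the dictionary $\partial f(x)=\partial g(x)+\sigma x$; equivalently, $v\in\partial f(x)$ iff $v-\sigma x\in\partial g(x)$. I would then establish the cycle (i)~$\Rightarrow$~(ii)~$\Rightarrow$~(iii)~$\Rightarrow$~(i), translating each step through this dictionary.

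For (i)~$\Rightarrow$~(ii): I would take $v\in\partial f(x)$, so that $u:=v-\sigma x\in\partial g(x)$, and feed $u$ into the subgradient inequality $g(y)\ge g(x)+\langle u,y-x\rangle$ valid for the convex function $g$. Replacing $g$ by $f-q$ and expanding, the contribution of $q$ collapses by completing the square, via $-\sigma\langle x,y-x\rangle+(\sigma/2)\|y\|^2-(\sigma/2)\|x\|^2=(\sigma/2)\|y-x\|^2$, producing exactly the quadratic term of~(ii). For (ii)~$\Rightarrow$~(iii): I would apply~(ii) at the pair $(x,y)$ with $w\in\partial f(x)$ and at the pair $(y,x)$ with $v\in\partial f(y)$, giving $f(y)\ge f(x)+\langle w,y-x\rangle+(\sigma/2)\|y-x\|^2$ and $f(x)\ge f(y)+\langle v,x-y\rangle+(\sigma/2)\|x-y\|^2$. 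Adding the two cancels $f(x)+f(y)$ and leaves $0\ge\langle w-v,y-x\rangle+\sigma\|y-x\|^2$, which is a rearrangement of the desired inequality $\langle w-v,x-y\rangle\ge\sigma\|y-x\|^2$.

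The hard part is (iii)~$\Rightarrow$~(i). Setting $u_x:=w-\sigma x\in\partial g(x)$ and $u_y:=v-\sigma y\in\partial g(y)$, a one-line computation turns~(iii) into $\langle u_x-u_y,x-y\rangle\ge0$, i.e.\ $\partial g$ is monotone. What remains is the genuine converse to the elementary fact that subdifferentials of convex functions are monotone: I must deduce convexity of $g$ from monotonicity of $\partial g$. The route I would take is the one-dimensional reduction: for fixed $x,y$ set $\varphi(t):=g(x+t(y-x))$ on $[0,1]$ and use monotonicity of $\partial g$ to show that the one-sided derivative of $\varphi$ is nondecreasing, so $\varphi$ is convex; as this holds along every segment, $g$ is convex, which is~(i). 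This reduction is where the real content sits, and it is the step at which one invokes the convex-analysis characterization of convexity through monotonicity of the subdifferential.
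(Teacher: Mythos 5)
First, a point of reference: the paper does not prove this statement at all --- it is quoted from Beck \cite{Amir}, where it carries the standing hypothesis that $f$ is proper, closed and \emph{convex}, so the comparison below is with the standard textbook argument. Your first two legs are correct and standard: (i)$\Rightarrow$(ii) via the sum rule $\partial(g+q)(x)=\partial g(x)+\sigma x$ is legitimate there, because (i) itself supplies the convexity of $g$ needed for that rule, and (ii)$\Rightarrow$(iii) by writing (ii) at $(x,y)$ and at $(y,x)$ and adding is exactly the usual computation.

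The genuine gap is in (iii)$\Rightarrow$(i), and it is a circularity rather than a missing routine detail. The membership $u_x:=w-\sigma x\in\partial g(x)$ for $w\in\partial f(x)$ is not available in this leg: for the convex-analysis subdifferential, the inclusion $\partial f(x)\subseteq\partial g(x)+\sigma x$ is the nontrivial half of the sum rule and holds only when $g$ is already known to be convex, which is precisely the conclusion (i). In fact, completing the square shows that ``$w-\sigma x\in\partial g(x)$'' is \emph{literally equivalent} to inequality (ii) at the point $x$ with subgradient $w$ --- so this step smuggles in (ii), which you may not use when proving (iii)$\Rightarrow$(i) in a cycle that derived (ii) from (i). The subsequent step ``$\partial g$ monotone $\Rightarrow g$ convex'' also cannot be run for this subdifferential of a function not yet known to be convex: $\partial g$ may be empty (monotonicity is then vacuous), the one-sided derivatives of $\varphi(t)=g(x+t(y-x))$ need not exist, and for the global subdifferential nonemptiness of $\partial g$ at every point already implies convexity, so any argument presupposing nonempty $\partial g$ along segments is circular; the converse you allude to is a real theorem only for Clarke/limiting subdifferentials of lsc functions (Correa--Jofr\'e--Thibault) and is far from elementary. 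A decisive symptom: your argument for (iii)$\Rightarrow$(i) never uses convexity of $f$, yet without that hypothesis the implication is false --- for $f(x)=-\|x\|^{2}$ one has $\partial f(x)=\emptyset$ for every $x$, so (iii) holds vacuously while (i) fails. The repair is to invoke the standing convexity of $f$: then $\partial f$ is nonempty everywhere and $t\mapsto f(x+t(y-x))$ is convex with one-sided derivatives; (iii) along the segment says that $t\mapsto\langle v_t,y-x\rangle-\sigma t\|y-x\|^{2}$ is nondecreasing for any selection $v_t\in\partial f(x+t(y-x))$, and integrating this over $[0,1]$ yields (ii); finally (ii)$\Rightarrow$(i) follows by applying (ii) at $z=\lambda x+(1-\lambda)y$ with some $v\in\partial f(z)$ (nonempty by convexity of $f$) against both $x$ and $y$ and taking the convex combination, which produces the strong convexity inequality for $\phi=f$.
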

\begin{definition}[{\cite[p. 107]{Amir}}]\label{def.lipschtz}
A differentiable function $f:\mathbb{R}^n \to \mathbb{R} $ has Lipschitz continuous gradientwith constant $L >0$ whenever $\|\nabla f(x) -\nabla f(y)\|\leq L\|x-y\|$, for all $x,y \in \mathbb{R}^{n}$.
\end{definition}
\begin{lemma}[{\cite[Lemma 5.7, p. 109]{Amir}}]\label{eq:IneqLip}
Assume that $f$ satisfies Definition~\ref{def.lipschtz}. Then, for all $x, d\in \mathbb{R}^n$ and all $\lambda \in \mathbb{R}$, there holds $f\left(x+ \lambda d\right) \leq f(x) +\lambda \left\langle \nabla f(x), d \right\rangle + L\lambda^2 \|d\|^2/2$.
\end{lemma}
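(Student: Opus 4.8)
The plan is to reduce the inequality to a one-dimensional statement along the segment from $x$ to $x+\lambda d$, apply the fundamental theorem of calculus, and then control the resulting integrand using the Lipschitz condition from Definition~\ref{def.lipschtz}. First I would fix $x,d\in\mathbb{R}^n$ and $\lambda\in\mathbb{R}$ and introduce the auxiliary function $g:[0,1]\to\mathbb{R}$ defined by $g(t):=f(x+t\lambda d)$. Since $f$ is differentiable, $g$ is differentiable on $[0,1]$ with $g'(t)=\lambda\langle\nabla f(x+t\lambda d),d\rangle$ by the chain rule, and $g'$ is continuous because $\nabla f$ is (Lipschitz) continuous. Hence the fundamental theorem of calculus applies and gives
\begin{equation*}
f(x+\lambda d)-f(x)=g(1)-g(0)=\int_0^1 g'(t)\,dt=\int_0^1 \lambda\langle\nabla f(x+t\lambda d),d\rangle\,dt.
\end{equation*}

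Next I would subtract the linear term $\lambda\langle\nabla f(x),d\rangle=\int_0^1\lambda\langle\nabla f(x),d\rangle\,dt$ from both sides to isolate the gradient increment, obtaining
\begin{equation*}
f(x+\lambda d)-f(x)-\lambda\langle\nabla f(x),d\rangle=\int_0^1 \lambda\,\langle\nabla f(x+t\lambda d)-\nabla f(x),d\rangle\,dt.
\end{equation*}
The key estimate is then to bound the integrand. By the Cauchy--Schwarz inequality and Definition~\ref{def.lipschtz} applied to the pair $x+t\lambda d$ and $x$, one has
\begin{equation*}
\lambda\,\langle\nabla f(x+t\lambda d)-\nabla f(x),d\rangle\le |\lambda|\,\|\nabla f(x+t\lambda d)-\nabla f(x)\|\,\|d\|\le |\lambda|\,L\,\|t\lambda d\|\,\|d\|= L\,t\,\lambda^2\|d\|^2,
\end{equation*}
where the crucial point is that the displacement inside the Lipschitz bound is $t\lambda d$, so its norm carries the factor $t$.

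Finally I would integrate this pointwise bound over $t\in[0,1]$, using $\int_0^1 t\,dt=1/2$, which yields
\begin{equation*}
f(x+\lambda d)-f(x)-\lambda\langle\nabla f(x),d\rangle\le L\lambda^2\|d\|^2\int_0^1 t\,dt=\frac{L\lambda^2\|d\|^2}{2},
\end{equation*}
and rearranging gives exactly the claimed inequality. The argument is essentially routine, and the only step requiring any care is the estimate of the integrand: one must remember that the Lipschitz inequality is applied to the displacement $t\lambda d$ rather than $\lambda d$, since it is precisely the resulting factor $t$ that produces the $1/2$ after integration and hence the correct quadratic constant $L/2$. I expect this to be the main place where a sign or a missing factor could slip in, but no deeper obstacle arises.
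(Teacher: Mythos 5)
Your proof is correct and is the standard argument: the paper itself does not prove this lemma but cites it as Lemma~5.7 of Beck's \emph{First-Order Methods in Optimization}, and your reduction to the segment $t\mapsto f(x+t\lambda d)$, the fundamental theorem of calculus, and the Cauchy--Schwarz/Lipschitz bound on the integrand is precisely the proof given in that reference. The key point you flag — that the Lipschitz estimate is applied to the displacement $t\lambda d$, producing the factor $t$ whose integral yields the constant $L/2$ — is handled correctly.
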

%%%%%%%%%%%%%%%%%%%%%%%%%%%%%
\section{The DC problem and assumptions}\label{sec:probl}
In this section we deal with the problem of minimizing the difference of two functions over $\mathbb{R}^{n}$, i.e.,
\begin{equation}\label{eq:upr}
\begin{array}{c}
\min \phi(x):=g(x)-h(x)\\
\mbox{s.t. } x\in \mathbb{R}^{n}.
\end{array}
\end{equation}
Throughout our study we will consider Problem~\ref{eq:upr} under the following assumptions:
\begin{enumerate}
\item[(H1)]
 $g,h:\mathbb{R}^{n}\to \mathbb{R}$ are both strongly convex functions with modulus $\sigma>0$;
\item[(H2)] $\phi ^{*}:=\inf _{x\in \mathbb{R}^n} \{ \phi (x)=g(x)-h(x) \}>-\infty .$
\item[(H3)]
 $g$ is continuously differentiable and $\nabla g$ is Lipschitz continuous with constant $L>\sigma>0$.
\end{enumerate}
Before proceed with our study let us first discuss the assumptions (H1)-(H3) in next remark.
\begin{remark}\label{remark1} We first note that (H1) is not restrictive. Indeed, given two convex functions $f_{1}$ and $f_{2}$ we can add to both a strongly convex term $({\sigma}/{2})\| x \|^{2}$ to obtain $g(x)=f_{1}(x)+({\sigma}/{2})\| x \|^{2}$ and $h(x)=f_{2}(x)+({\sigma}/{2})\| x \|^{2}$, which are strongly convex functions with modulus $\sigma>0$, see Lemma~\ref{lema_soma}. Therefore, $\phi (x) = f_{1}(x)-f_{2}(x) = g(x) - h(x)$, for all $x\in \mathbb{R}^{n},$ which shows that solving Problem~\ref{eq:upr} under (H1) is equivalent to solve the original Problem \ref{P1}. (H2) is a usual assumption in the context of DC programming, see e.g \cite{ARAGON2017,ARAGON2019} and \cite{CruzNetoEtAl2018}. Assumption (H3) will be used to ensure the descent property of our method; such assumption is also used to analyze gradient method with constant step size, see \cite{Nesterov2018}. Finally, note that the combination of (H1) with (H3) and item~$(iii)$ of Theorem \ref{teo2} implies that $L\geq \sigma>0$. Since $g$ satisfies (H1) with $\sigma>0$, it also satisfies (H1) for any $0<\tilde{\sigma}<\sigma$. Therefore, we can assume without loss of generality that $L>\sigma>0.$
\end{remark}
\begin{example} \label{ex:dcffe}
Let $\phi: {\mathbb R}^2 \to {\mathbb R}$ be defined by $\phi(x,y):=f_1(x,y)-f_2(x,y)$, where $f_1(x,y)=\ln(0.2e^x+e^y)+x^2+y^2$ and $f_2(x, y)=|x|+|y|+|x-y|$. Thus, the functions $g(x)=f_{1}(x)+({\sigma}/{2})\| x \|^{2}$ and $h(x)=f_{2}(x)+({\sigma}/{2})\| x \|^{2}$ with $\sigma>0$ satisfy assumptions (H1)-(H3).
\end{example}
The Figure~\ref{fig:lsg} represents the level curves of $\phi$. As we see, the function $\phi$ is non-differentiable at points where $x=y$ and on the coordinate axes. We know that at points where $\phi$ is differentiable, the negative gradients are descent directions. Furthermore, the level curves in Figure~\ref{fig:lsg} suggest that, at the points where $\phi$ is nondifferentiable, all directions opposite to generalized subgradients are also descent directions. Indeed, this interesting fact will be proved for the class of all functions $\phi=g-h$ that satisfy conditions (H1) and (H3), see Proposition~\ref{prop15u} below.
\begin{figure}[H]
\centering
\includegraphics[scale=0.5]{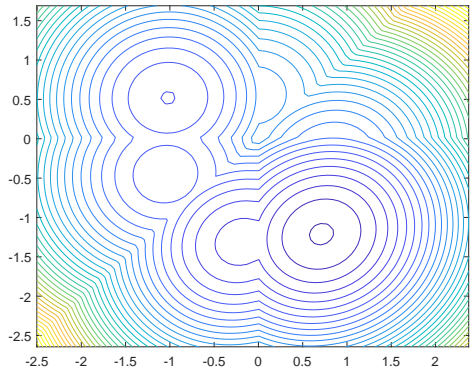}
\caption{Level curves of the function $\phi$ in Example~\ref{ex:dcffe}}
\label{fig:lsg}
\end{figure}

Next example the {\it single source location problem} is posed as a DC problem \eqref{eq:upr} that satisfies the assumptions (H1)-(H3), see for example\cite{BeckHallak2020}.
 
 \begin{example} \label{ex:slp}
 The single source location problem consists in locate an unknown source using the approximate distances $c_1, c_2, \ldots, c_m\in \mathbb{R}_{+}$ between the source and $m$ given sensors $b_1, b_2, \ldots, b_m\in \mathbb{R}^m$. This problem is stated as the following optimization problem: $\min_{x\in\mathbb{R}^n}\phi(x)=\sum_{i=1}^{m}(c_i-\|x-b_i\|)^2$. It can be stated as a DC problem \eqref{eq:upr} that satisfies the assumptions (H1), (H2) and (H3) by letting $g(x)=\sum_{i=1}^{m}(\|x-b_i\|^2+c^2_i)+({\sigma}/{2})\| x \|^{2}$ and $h(x)=\sum_{i=1}^{m}2c_i\|x-b_i\|+({\sigma}/{2})\| x \|^{2}$ with $\sigma>0$.
\end{example}
We end this section by presenting the definition of critical point in the context of DC programming, see \cite{ARAGON2019}.
\begin{definition}\label{critpoint} A point $x^{*}\in \mathbb{R}^{n}$ is critical of the Problem~\ref{sec:probl} if $\nabla g(x^{*}) \in \partial h(x^{*}) $.
\end{definition}
%%%%%%%%%%%%%%%%%%%%%%%%%%%%%%%%%%%%%%%%%%%%%%
\section{Boosted scaled subgradient method} \label{se:bssm}
In this section we present a boosted scaled subgradient method for DC programming. To this end, take a sequence $(H_{k})_{k\in\mathbb{N}}$ of $n\times n$ symmetric positive defined matrices such that there exist positive constants $\omega$ and $\varpi$ satisfying
\begin{equation}\label{assumptionHk}
{\omega} \| d\Vert ^{2} \leq \langle {H_{k}}d,d \rangle\leq {\varpi} \| d\Vert ^{2} , \qquad \forall d\in\mathbb{R}^{n}\backslash\{0\}. 
\end{equation} 
The conceptual algorithm is as follows: 

\begin{algorithm}
\caption{Boosted Scaled Subgradient Method (BSSM)}
\label{Alg:ASSPM}
\begin{algorithmic}[1]
\STATE{Fix ${\lambda _{max}}>0$, $\rho>0$ and $\zeta \in (0,1)$. Choose an initial point $x^0\in \mathbb{R}^{n}$ and positive numbers $\beta_{min},\beta _{max}$ such that $0<\beta_{min} <\beta _{max}< {\omega/(L-\sigma)} $. Set $k=0$.}
\STATE{Choose $w^{k}\in\partial h(x^{k})$, $\beta _{k}\in [\beta _{min},\beta _{max}]$, $H_{k}$ satisfying \eqref{assumptionHk} and compute $y^{k}$ the solution of the following quadratic problem
\begin{equation} \label{eq:ASSPMu}
\min _{x\in \mathbb{R}^{n}} \psi _{k}(x):=\left \langle \nabla g(x^{k})-w^{k},x-x^{k} \right\rangle + \frac{1}{2\beta _{k}} \left\langle {H_{k}}(x-x^{k}),x-x^{k} \right\rangle.
\end{equation}
}
\STATE{Set $ d^{k}:=y^{k}-x^{k}$. If $d^{k} =0$, then STOP and return $x^{k}$. Otherwise, go to Step~4.}
\STATE{Set $\lambda _{k}:= \zeta^{j_k}\lambda_{max}$, where
\begin{align} j_k&:=\min \left\{j\in {\mathbb N}: ~\phi( y^{k}+\zeta^{j} \lambda _{max}d^{k})\leq \phi (y^{k})-\rho \left(\zeta^{j}{\lambda _{max}}\right)^{2}\| d^{k}\| ^{2}\right\}. \label{eq:jku}
\end{align}
}
\STATE{Set $x^{k+1}:=y^{k}+\lambda _{k}d^{k}$; set $k \leftarrow k+1$ and go to Step~2.}
\end{algorithmic}
\end{algorithm}

Denote by $(x^k)_{k\in\mathbb{N}}$ the sequence generated by Algorithm~\ref{Alg:ASSPM}. 
First of all, note that for all $L>\sigma>0$ and $\omega>0$, one can choose numbers $\beta _{min}$ and $\beta _{max}$ such that $0<\beta_{min}<\beta _{max}< {\omega/(L-\sigma)}$. Note that \eqref{assumptionHk} is a usual assumption in variable metric methods; see \cite{bonettini2019recent}. It is verified taking $0<\omega\leq \inf_{k\in\mathbb{N}}\Gamma_k^{-}$ and $\varpi \geq \sup_{k\in\mathbb{N}}\Gamma_k^{+}$, where $\Gamma_k^{-}$ and $\Gamma_k^{+}$ are the smallest and largest eigenvalues of $H_k$, respectively.
Furthermore, due to the matrix ${H_{k}}$ be positive defined, the Problem~\ref{eq:ASSPMu} always has a unique solution $y^{k}$, which is characterized by 
\begin{equation}\label{eq:charyk}
\nabla g (x^{k})-w^{k}+ \frac{1}{\beta_{k}}{H_{k}}(y^{k}-x^{k})=0, 
\end{equation} 
see Theorem~\ref{th:umsc}. Algorithm~\ref{Alg:BPSM} can be seeing as a boosted scaled subgradient method for DC problems; see \cite{bonettini2019recent} for a review of scaled subgradient method and the references therein. Note that if $H_{k}$ is equal to the identity matrix in \eqref{eq:ASSPMu}, then it follows from \eqref{eq:charyk} that the solution of the quadratic subproblem has a closed formula $y^k:=x^k-\beta \big(\nabla g(x^k)-w^k\big)$.

\begin{remark}\label{re:hb}
Clearly, the choice of $H_k$, $\beta_k$ and $\lambda_k$ affects the computational performance of the method. In fact, the scale matrix $H_k$ and the step size $\beta_k$ are freely selected in order to give an extra flexibility to BSSM, but {\it without increasing the cost of each iteration \eqref{eq:xk1i}}. Strategies for choosing both $H_k$ and $\beta_k$ have its origin in the study of gradient-type methods and papers addressing this issue includes but not limited to \cite{BarzilaiBorwein1998, BonettiniPrato2015,DaiFletcher2005, DaiFletcher2006, Serafino2018}. In this work, we refrain from discussing the best strategy to take $H_k$, $\beta_k$ and $\lambda_k$.
\end{remark}

\begin{remark} In the sequel, we present some particular instances of Algorithm~\ref{Alg:ASSPM}.
\begin{enumerate}
\item[1.] If $h$ is differentiable, then Algorithm~\ref{Alg:ASSPM} becomes a boosted version of the scaled gradient method applied to $\phi$, see \cite{bonettini2019recent};
\item[2.] If in Problem~\ref{P1}, if $f_{2}=0$, then $\phi=f_1$ and Algorithm~\ref{Alg:ASSPM} becomes a boosted version of the scaled gradient method applied to $f_{1}$, see \cite{bonettini2019recent};
\item[3.] If $h$ is twice differentiable, $g=2h$ and $\nabla g$ is Lipschtz with constant $L<3\sigma$, then choosing $\beta _{k}$ such that $0<(\omega/2\sigma) \leq \beta _{k}\leq \omega/(L-\sigma)$ and letting ${H_{k}}=\beta _{k} h''(x^{k})$ Algorithm~\ref{Alg:ASSPM} becomes a boosted version of the damped Newton method applied to $h$; see \cite{LevitinPoljak1966}.
\end{enumerate}
\end{remark}
%%%%%%%%%%%%%%%%%%%%%%%%%%%%%%%%%%%%%%%%%%%%%
\subsection{ Well definition and partial asymptotic convergence analysis} \label{sec:wd}
The aim of this section is to present the well definition of our method and the asymptotic convergence for the sequence $(x^{k}) _{k\in \mathbb{N}}$ generated by Algorithm~\ref{Alg:ASSPM}. For that we define the following positive constants
\begin{equation}\label{eq:defalfa}
 0<\alpha:= \frac{\omega}{\beta_{max}}- \frac{1}{2}(L-\sigma), \qquad \qquad 0<\kappa := \alpha- \frac{1}{2}(L-\sigma).
\end{equation}
\begin{proposition}\label{prop15u} 
For each $k\in\mathbb{N}$, the following statements hold:
\begin{enumerate}
\item[(i)] If $d^{k}=0$, then $x^{k}$ is a critical point of Problem~\ref{eq:upr}.
\item[(ii)] There holds
\begin{equation}\label{eq:dscu}
\phi (y^{k})\leq \phi (x^{k})-\alpha \| d^{k}\| ^{2};
\end{equation}
\item[(iii)] $\langle \nabla g(y^{k})-v, d^{k} \rangle \leq -\kappa\Vert d^{k}\|^{2}$, for all $v\in\partial h(y^{k})$. And, $\phi ^{\circ}(y^{k};d^{k})\leq -\kappa \| d^{k}\|^{2}$; 
\item[(iv)] If $d^{k}\neq 0$, then there exists ${\delta}_{k}>0$ such that 
$\phi (y^{k}+\lambda d^{k})\leq \phi (y^{k}) -\lambda^2\rho \| d^{k}\| ^{2},
$ for all $\lambda \in (0,{\delta}_{k}].$ Consequently, $\lambda_k$ in Step 4 is well defined. 
\end{enumerate}
\end{proposition}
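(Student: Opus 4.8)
The plan is to handle the four items in sequence, using throughout the optimality characterization \eqref{eq:charyk}, which I read as $\tfrac{1}{\beta_{k}}H_{k}d^{k}=w^{k}-\nabla g(x^{k})$ with $w^{k}\in\partial h(x^{k})$. Item (i) is then immediate: putting $d^{k}=0$ into \eqref{eq:charyk} gives $\nabla g(x^{k})=w^{k}\in\partial h(x^{k})$, which is exactly the criticality condition of Definition~\ref{critpoint}. For item (ii) I would bound $\phi(y^{k})-\phi(x^{k})=[g(y^{k})-g(x^{k})]-[h(y^{k})-h(x^{k})]$ from above. The $g$-part is controlled by the descent Lemma~\ref{eq:IneqLip} (with step $\lambda=1$ along $d^{k}$), giving $g(y^{k})\le g(x^{k})+\langle\nabla g(x^{k}),d^{k}\rangle+\tfrac{L}{2}\|d^{k}\|^{2}$, while the $h$-part is bounded below by strong convexity via Theorem~\ref{teo2}(ii) with $w^{k}\in\partial h(x^{k})$. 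Adding these leaves $\langle\nabla g(x^{k})-w^{k},d^{k}\rangle+\tfrac{L-\sigma}{2}\|d^{k}\|^{2}$; then \eqref{eq:charyk} together with the lower bound in \eqref{assumptionHk} and $\beta_{k}\le\beta_{max}$ converts the inner product into $-\tfrac{1}{\beta_{k}}\langle H_{k}d^{k},d^{k}\rangle\le-\tfrac{\omega}{\beta_{max}}\|d^{k}\|^{2}$, and collecting terms gives precisely $-\alpha\|d^{k}\|^{2}$ by the definition of $\alpha$ in \eqref{eq:defalfa}.

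Item (iii) is the technical core. Fixing $v\in\partial h(y^{k})$, I would split
\[
\langle\nabla g(y^{k})-v,d^{k}\rangle=\langle\nabla g(y^{k})-\nabla g(x^{k}),d^{k}\rangle+\langle\nabla g(x^{k})-w^{k},d^{k}\rangle+\langle w^{k}-v,d^{k}\rangle,
\]
and estimate the three summands separately: Lipschitz continuity of $\nabla g$ (Definition~\ref{def.lipschtz}) with Cauchy--Schwarz bounds the first by $L\|d^{k}\|^{2}$; the characterization \eqref{eq:charyk} with \eqref{assumptionHk} and $\beta_{k}\le\beta_{max}$ bounds the second by $-\tfrac{\omega}{\beta_{max}}\|d^{k}\|^{2}$; and the monotonicity inequality Theorem~\ref{teo2}(iii) applied to $\partial h$ (with $x^{k}-y^{k}=-d^{k}$) bounds the third by $-\sigma\|d^{k}\|^{2}$. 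Summing yields $\big(L-\sigma-\tfrac{\omega}{\beta_{max}}\big)\|d^{k}\|^{2}=-\kappa\|d^{k}\|^{2}$ by \eqref{eq:defalfa}, uniformly in $v$. To pass to $\phi^{\circ}(y^{k};d^{k})$, I would use Theorem~\ref{subdif_DC}(i) to write $\phi^{\circ}(y^{k};d^{k})=\langle\nabla g(y^{k}),d^{k}\rangle-h'(y^{k};d^{k})$, and Theorem~\ref{th:cdd}(ii) together with compactness of $\partial h(y^{k})$ to realize $h'(y^{k};d^{k})=\langle v^{*},d^{k}\rangle$ for a maximizing $v^{*}\in\partial h(y^{k})$; the uniform bound applied at $v^{*}$ then gives $\phi^{\circ}(y^{k};d^{k})\le-\kappa\|d^{k}\|^{2}$.

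For item (iv) I prefer a direct quadratic estimate over a limiting argument. Fixing any $v\in\partial h(y^{k})$, the descent Lemma~\ref{eq:IneqLip} for $g$ along $d^{k}$ and convexity of $h$ give $\phi(y^{k}+\lambda d^{k})-\phi(y^{k})\le\lambda\langle\nabla g(y^{k})-v,d^{k}\rangle+\tfrac{L\lambda^{2}}{2}\|d^{k}\|^{2}$, and inserting the bound from (iii) yields $\le\lambda\big(-\kappa+\tfrac{L\lambda}{2}\big)\|d^{k}\|^{2}$. This is at most $-\rho\lambda^{2}\|d^{k}\|^{2}$ as soon as $\lambda\le\delta_{k}:=2\kappa/(L+2\rho)$, which is positive since $\kappa>0$. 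Because $\zeta\in(0,1)$ forces $\zeta^{j}\lambda_{max}\to0$, some index $j$ satisfies $\zeta^{j}\lambda_{max}\le\delta_{k}$, so the Armijo condition in \eqref{eq:jku} holds there; hence the set defining $j_{k}$ is nonempty, its minimum $j_{k}$ exists, and $\lambda_{k}$ is well defined.

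The main obstacle is the sign-and-constant bookkeeping in (iii): the three separate bounds must combine into exactly $-\kappa\|d^{k}\|^{2}$, which requires pairing the Lipschitz constant $L$, the strong-convexity modulus $\sigma$, and the metric bound $\omega/\beta_{max}$ correctly, and then reducing the Clarke directional derivative $\phi^{\circ}$ to a single worst-case subgradient $v^{*}$ so that the per-$v$ estimate transfers faithfully. Everything else is a routine combination of the descent lemma, strong convexity, and the optimality characterization.
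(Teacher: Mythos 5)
Your proposal is correct, and for items (i)--(iii) it is essentially the paper's own argument: the same use of the optimality characterization \eqref{eq:charyk}, the same pairing of the descent lemma on the $g$-part with strong convexity of $h$ (the paper merely packages this via an auxiliary function ${\hat f}_{k}(x)=g(x)-\langle w^{k},x-x^{k}\rangle$, which is the identical computation), and the same three-term split for $\langle \nabla g(y^{k})-v,d^{k}\rangle$ combined through Theorem~\ref{teo2}(iii) and \eqref{eq:iemtu}. Where you genuinely diverge is item (iv). The paper argues qualitatively: from $\phi^{\circ}(y^{k};d^{k})<-(\kappa/2)\|d^{k}\|^{2}$ and the $\limsup$ definition of the Clarke directional derivative it extracts a non-explicit $\tilde{\delta}_{k}>0$, then takes $\delta_{k}=\min\{\tilde{\delta}_{k},\kappa/(2\rho)\}$. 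You instead prove the quadratic estimate $\phi(y^{k}+\lambda d^{k})-\phi(y^{k})\leq \lambda(-\kappa+\tfrac{L}{2}\lambda)\|d^{k}\|^{2}$ directly from the descent lemma and convexity of $h$, yielding the explicit, $k$-independent threshold $\delta_{k}=2\kappa/(L+2\rho)$. Your route is stronger at this stage: it already contains the quantitative content of the paper's later Lemma~\ref{le:lambdamin} (which redoes exactly this computation, sharpened via strong convexity of $h$ to get the constant $2\kappa\zeta/(L-\sigma+2\rho)$) and of Remark~\ref{constantstep}; the paper's route is lighter here but must pay for the quantitative bound separately when establishing the iteration-complexity results. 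Both are valid, and your constant, though slightly weaker than the one in Lemma~\ref{le:lambdamin} because you use only convexity rather than strong convexity of $h$, is positive and fully sufficient for the well-definedness of $\lambda_{k}$.
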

\begin{proof} 
To prove item $(i)$ recall that due to $y^{k}$ be the solution of Problem~\ref{eq:ASSPMu}, it satisfies \eqref{eq:charyk}. Thus, if $d^{k}=0$, then $y^{k}=x^{k}$ satisfies Definition \ref{critpoint}. Consequently it is a critical point of Problem~\ref{eq:upr}. 
To prove item $(ii)$, take $w^{k}\in\partial h(x^{k})$ and consider the function ${\hat f}_{k}:\mathbb{R}^{n}\to \mathbb{R}$ defined by 
\begin{equation} \label{eq:dfk1u}
{\hat f}_{k}(x)=g(x)-\langle w^{k},x-x^{k}\rangle.
\end{equation}
Since $\nabla g$ is Lipschitz continuous with constant $L>0$, $\nabla {\hat f}_{k}$ is also Lipschitz continuous with constant $L>0$. Thus, using Lemma~\ref{eq:IneqLip} with $\lambda=1$, $x=x^k$ and $d=y^{k}-x^k$, we have
$$
{\hat f}_{k} (y^{k})-{\hat f}_{k}(x^{k}) \leq \langle \nabla g(x^{k})-w^{k}, y^{k}-x^{k}) \rangle + \frac{L}{2}\| y^{k}-x^{k}\| ^{2}.
$$
On the other hand, using \eqref{assumptionHk}, we obtain after some calculus that 
\begin{equation}\label{eq:iemtu}
\left\langle \nabla g (x^{k})-w^{k},y^{k}-x^{k}\right\rangle = -\frac{1}{\beta _{k} } \left\langle {H_{k}}d^{k},d^{k}\right\rangle \leq -\frac{\omega}{\beta _{k}} \|d^{k}\|^2 \leq -\frac{\omega}{\beta _{max}} \|y^{k}-x^{k}\|^2 .
\end{equation}
Combining two previous inequalities we have ${\hat f}_{k} (y^{k})-{\hat f}_{k}(x^{k}) \leq \left(L/2 -\omega/\beta _{max} \right)\| y^{k}-x^{k}\| ^{2}$.
Hence, using \eqref{eq:dfk1u} we conclude
$
g(y^{k}) -g(x^{k}) \leq \langle w^{k},y^{k}-x^{k} \rangle -(\omega/\beta-L/2) \| y^{k}-x^{k}\| ^{2}.
$ 
Since $h$ is strongly convex, using item $(ii)$ of Theorem~\ref{teo2} we obtain
$$
g(y^{k}) -g(x^{k}) \leq h(y^{k})-h(x^{k}) -\left( \frac{\sigma}{2}+ \frac{\omega}{\beta _{max}}-\frac{L}{2} \right) \| y^{k}-x^{k}\| ^{2}, 
$$
and taking into account that $\phi=g-h$ and \eqref{eq:defalfa}, last inequality is equivalent to \eqref{eq:dscu}. To prove item $(iii)$, first note that due to $\nabla g$ be Lipschitz. continuous with constant $L>0$ we have
$$
\langle \nabla g(y^{k})-\nabla g(x^{k}), y^{k}-x^{k} \rangle \leq \| \nabla g(y^{k})-\nabla g(x^{k})\| \| y^{k}-x^{k} \| \leq L\| y^{k}-x^{k}\| ^{2} .
$$
From (H1) and item~$(iii)$ of Theorem~\ref{teo2} we have $\langle w^{k}-v,y^{k}-x^{k} \rangle \leq -\sigma \| y^{k}-x^{k}\| ^{2}$, for all $v\in\partial h(y^{k}).$ Adding two last inequalities with \eqref{eq:iemtu} and using \eqref{eq:defalfa} we obtain 
$$\langle \nabla g(y^{k})-v,y^{k}-x^{k} \rangle\leq \left[ -\frac{\omega}{\beta _{max}}+L-\sigma \right]\|y^{k}-x^{k}\|^{2}=-\kappa\|y^{k}-x^{k}\|^{2}\leq 0,$$
for all $v\in \partial h(y^{k})$ which proves the first statement of item $(iii)$. Moreover, using item $(i)$ of Theorem~\ref{subdif_DC}, the convexity of $h$ and item $(ii)$ of Theorem~\ref{th:cdd} we obtain
$
\phi ^{\circ}(y^{k};d^{k}) = \langle \nabla g(y^{k}), d^{k}\rangle-h'(y^{k};d^{k}) \leq \langle \nabla g (y^{k})-v ,d^{k}\rangle, 
$
for all $v\in\partial h(y^{k})$, which together the first part of item $(iii)$ gives the second statement of item $(iii)$. To prove item $(iv)$, we first use that $d^{k}\neq 0$ together with the second statement of item $(iii)$ to obtain that $\phi ^{\circ}(y^{k};d^{k})< -(\kappa/2)\| d^{k}\| ^{2}.$ Thus, it follows from Definition~\ref{def:dd} that there is some $\tilde{\delta}_{k}>0$ such that$(\phi (y^{k}+\lambda d^{k})-\phi (y^{k}))/\lambda\leq -(\kappa/2) \| d^{k}\| ^{2},$
for all $\lambda \in (0,\tilde{\delta}_{k}]$. Moreover, by setting $\delta _{k}:=\min \{ \tilde{\delta}_{k}, \kappa /{2\rho} \}>0,$ we obtain that $\phi (y^{k}+\lambda d^{k})\leq \phi (y^{k})-\rho\lambda ^{2}\| d^{k}\|^{2}$, for all $\lambda \in (0,\delta _{k}]$ and the first part of item $(iv)$ is proved. Finally, due to $ \lim_{j\to \infty}(\zeta^{j}{\lambda _{max}})=0 $, it follows from the first part of item $(iv)$ that $\lambda_k$ in Step 4 is well defined, which concludes the proof.
\end{proof}
Note that Proposition~\ref{prop15u} ensures that the sequence $(x^k)_{k\in\mathbb{N}}$ generated by Algorithm~\ref{Alg:BPSM} is well defined and $\phi (x^{k+1})\leq \phi (y^{k})-\rho\lambda _{k}^{2}\|d^{k}\|^{2}$, for all $k\in \mathbb{N}$ which together \eqref{eq:dscu} implies that
\begin{equation} \label{eq:dscykcu}
\phi (x^{k+1})\leq \phi (x^{k}) -(\alpha + \rho\lambda _{k}^{2}) \| d^{k}\| ^{2}, \qquad \forall k\in \mathbb{N}.
\end{equation}
Next result establishes the partial asymptotic convergence for the sequence $(x^{k}) _{k\in \mathbb{N}}$ generated by Algorithm~\ref{Alg:ASSPM}.
\begin{proposition}\label{coroprop15u}
 The following statements hold:
\begin{enumerate}
\item[(i)] $ \lim _{k\to \infty } \|y^{k}-x^{k}\|= 0$ and $\lim _{k\to \infty }\| x^{k+1}-x^{k}\| = 0$; 
\item[(ii)] Every cluster point of $(x^k)_{k\in\mathbb{N}}$, if any, is a critical point of Problem~\ref{eq:upr}.
\end{enumerate}
\end{proposition}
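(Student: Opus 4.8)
The plan is to use the sufficient-decrease inequality \eqref{eq:dscykcu} to force $\|d^k\|\to 0$, and then to pass to the limit in the optimality characterization \eqref{eq:charyk} of $y^k$, invoking the graph-closedness of $\partial h$ recorded in Proposition~\ref{cont_subdif}. If the algorithm stops at some finite $k$ with $d^k=0$, both claims are immediate from Proposition~\ref{prop15u}(i), so I assume throughout that the sequence is infinite.

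For item (i), I would first observe that \eqref{eq:dscykcu} together with $\alpha>0$ and $\rho,\lambda_k^2\geq 0$ shows $(\phi(x^k))_{k\in\mathbb{N}}$ is non-increasing; since (H2) gives $\phi(x^k)\geq\phi^*>-\infty$, this sequence converges, so $\phi(x^k)-\phi(x^{k+1})\to 0$. Rearranging \eqref{eq:dscykcu} yields $\alpha\|d^k\|^2\leq\phi(x^k)-\phi(x^{k+1})$, whence $\|d^k\|\to 0$; since $d^k=y^k-x^k$ this is the first limit. For the second, I combine $x^{k+1}=y^k+\lambda_kd^k$ with $d^k=y^k-x^k$ to get $x^{k+1}-x^k=(1+\lambda_k)d^k$, so $\|x^{k+1}-x^k\|=(1+\lambda_k)\|d^k\|\leq(1+\lambda_{max})\|d^k\|$ using $\lambda_k=\zeta^{j_k}\lambda_{max}\leq\lambda_{max}$; letting $k\to\infty$ closes item (i).

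For item (ii), let $x^{k_j}\to x^*$ be a convergent subsequence. From $\|d^{k_j}\|\to 0$ I get $y^{k_j}\to x^*$ as well. The main step is to rewrite \eqref{eq:charyk} as $w^k=\nabla g(x^k)+\beta_k^{-1}H_kd^k$ and take $j\to\infty$: continuity of $\nabla g$ gives $\nabla g(x^{k_j})\to\nabla g(x^*)$, while \eqref{assumptionHk} (the eigenvalues of $H_k$ lie in $[\omega,\varpi]$, so $\|H_kd^k\|\leq\varpi\|d^k\|$) together with $\beta_k\geq\beta_{min}$ gives $\|\beta_{k_j}^{-1}H_{k_j}d^{k_j}\|\leq(\varpi/\beta_{min})\|d^{k_j}\|\to 0$. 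Hence $w^{k_j}\to\nabla g(x^*)$. Since $w^{k_j}\in\partial h(x^{k_j})$ and $x^{k_j}\to x^*$, Proposition~\ref{cont_subdif} forces $\nabla g(x^*)\in\partial h(x^*)$, which is exactly Definition~\ref{critpoint}.

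The only delicate point I anticipate is item (ii): one must check that $\|d^k\|\to 0$ is enough to annihilate the scaled term $\beta_k^{-1}H_kd^k$ \emph{uniformly} in $k$, which is precisely where the two-sided bound \eqref{assumptionHk} and the lower bound $\beta_k\geq\beta_{min}$ enter, and then apply the closedness of the graph of $\partial h$ correctly, bearing in mind that $w^{k_j}$ is a subgradient evaluated at $x^{k_j}$ rather than at the limit $x^*$.
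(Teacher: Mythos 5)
Your proof is correct and follows essentially the same route as the paper: sufficient decrease \eqref{eq:dscykcu} plus the lower bound (H2) and the telescoping argument for item (i), and passing to the limit in the optimality condition \eqref{eq:charyk} combined with Proposition~\ref{cont_subdif} for item (ii). The only (immaterial) difference is the order in item (ii): you show $w^{k_j}\to\nabla g(x^*)$ directly from \eqref{eq:charyk} and then invoke graph-closedness, whereas the paper first extracts a convergent subsequence $w^{k_\ell}\to\bar w\in\partial h(\bar x)$ via Proposition~\ref{cont_subdif} and then identifies $\bar w=\nabla g(\bar x)$ by taking the limit in the same equation.
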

\begin{proof}
Proof of item~$(i)$: Since in Step 3 we have $d^k=y^{k}-x^{k}$, \eqref{eq:dscykcu} implies 
\begin{equation}\label{eq:pptu}
0\leq \alpha \| y^{k}-x^{k}\| ^{2}\leq \phi (x^{k})-\phi (x^{k+1}), \qquad \forall k\in \mathbb{N}.
\end{equation}
Since $(\phi (x^{k}))_{k\in \mathbb{N}}$ is decreasing, (H2) implies that $(\phi (x^{k}))_{k\in \mathbb{N}}$ converges. Thus, letting $k$ goes to $\infty$ in \eqref{eq:pptu}, we obtain the first part of item $(i)$. To prove the second part of item~$(i)$, we first note that, by Step 5 we have $x^{k+1}=y^{k}+\lambda _{k}(y^{k}-x^{k})$, for all $k\in \mathbb{N}$. Hence, $\|x^{k+1}-x^{k}\|^{2}=(1+\lambda _{k})^{2}\|y^{k}-x^{k}\|^{2} \leq (1+ {\lambda _{max}})^{2}\|y^{k}-x^{k}\|^{2}$. Thus, by using the first part, the second one follows. 

Proof of item~$(ii)$: Assume that ${\bar x}$ is a cluster point of $(x^k)_{k\in\mathbb{N}}$, and let $(x^{k_{\ell}})_{\ell\in \mathbb{N}}$ be a subsequence of $(x^k)_{k\in\mathbb{N}}$ such that $\lim _{\ell\to \infty}x^{k_{\ell}}={\bar x}$. Let $(w^{k_{\ell}})_{\ell\in \mathbb{N}}$ and $(y^{k_{\ell}})_{\ell\in \mathbb{N}}$ be associated to $(x^{k_{\ell}})_{\ell\in \mathbb{N}}$, i.e., $w^{k_{\ell}}\in\partial h(x^{k_{\ell}})$ and $y^{k_{\ell}}$ satisfies 
\begin{equation}\label{eq:50aau}
\nabla g (x^{k_{\ell}})-w^{k_{\ell}}+\frac{1}{\beta _{k_{\ell}} }{H_{k_{\ell}}}(y^{k_{\ell}}-x^{k_{\ell}})=0.
\end{equation}
On the other hand, item~$(i)$ together with $\lim _{\ell\to \infty}x^{k_{\ell}}={\bar x}$ implies that $\lim _{\ell\to \infty}y^{k_{\ell}}={\bar x}$. Furthermore, since $w^{k_{\ell}}\in \partial h(x^{k_{\ell}})$, using convexity of $h$ and the Proposition \ref{cont_subdif} and taking into account $\lim _{\ell\to \infty}x^{k_{\ell}}={\bar x}$, we conclude without loss of generality that $\lim _{\ell\to \infty}w^{k_{\ell}}=\bar{w}\in \partial h({\bar x})$. Moreover, due to $\lim _{\ell\to \infty}x^{k_{\ell}}={\bar x}$ and $\lim _{\ell\to \infty}y^{k_{\ell}}={\bar x}$, \eqref{assumptionHk} implies $\lim _{\ell\to \infty}{H_{k_{\ell}}}(y^{k_{\ell}}-x^{k_{\ell}})=0$. Therefore, using that $(\beta _{k})_{k\in\mathbb{N}}$ is bounded and taking limit in \eqref{eq:50aau} we have $ \nabla g ({\bar x})={\bar w} \in \partial h(\bar{x})$, which proof item~$(ii)$. 
\end{proof}
We end this section showing that under mild assumptions we can take constant step sizes in Algorithm \ref{Alg:ASSPM}, which is particularly interesting for high dimensional problems. 
\begin{remark}\label{constantstep}
It is worth to note that $\sigma>L/2>0$ and $ 0<\kappa - (L/2) <\rho< \kappa$ means that the step size $\lambda_k$ in Step 4 of Algorithm \ref{Alg:ASSPM} can be taken as $\lambda _{k}:=2 (\kappa -\rho)/L$, for all $k\in \mathbb{N}$. Indeed, take $k\in \mathbb{N}$ and $v \in \partial h(y^{k})$. Consider the auxiliary convex function $s_{k}:\mathbb{R}^{n}\rightarrow \mathbb{R}$
given by $s_{k}(x)=g(x)-\langle v,x-x^{k} \rangle.$
Since $\nabla s_{k}(x)=\nabla g(x)-v$ and $\nabla g$ is Lipschitz continuous with constant $L>0$, then $\nabla s_{k}$ is also Lipschitz continuous with constant $L>0.$ Hence, using Lemma \ref{eq:IneqLip} we have 
$$
s_{k} (y^{k}+\lambda d^{k})-s_{k}(y^{k}) \leq \lambda \langle \nabla s_{k}(y^{k}), d^{k} \rangle + \frac{L}{2}\lambda ^{2}\| d^{k}\| ^{2}
 = \lambda \Big( \langle \nabla g(y^{k})-v, d^{k} \rangle + \frac{L}{2}\lambda \| d^{k}\| ^{2} \Big), 
$$
for all $\lambda \in {\mathbb R}$. Thus, for all $\lambda \in \left( 0 , 2 (\kappa -\rho)/L \right] \subset \left(0,1\right),$ the first statement in item~$(iii)$ of Proposition~\ref{prop15u} implies that $s_{k} (y^{k}+\lambda d^{k})-s_{k}(y^{k}) \leq \lambda (\frac{L}{2}\lambda -\kappa)\| d^{k}\| ^{2}\leq -\lambda^{2} \rho \| d^{k}\| ^{2}$.
Thus, since $g(x)=s_{k}(x)+\langle v,x-x^{k} \rangle$, together with the convexity of $h$ and item $(ii)$ of Theorem~\ref{teo2} we conclude
$$g(y^{k}+\lambda d^{k})-g(y^{k}) \leq \langle v,(y^{k}+\lambda d^{k})- y^{k} \rangle -\lambda^{2} \rho \| d^{k}\| ^{2} \leq h(y^{k}+\lambda d^{k})-h(y^{k})-\lambda ^{2}\rho \| d^{k}\| ^{2}.
$$
Since $ \phi=g-h$, the last inequality implies that $\phi (y^{k}+\lambda d^{k})\leq \phi (y^{k})-\lambda^{2}\rho \|d^{k}\|^{2}$, for all $\lambda \in ( 0 , 2 (\kappa -\rho)/L ]$.
The last inequality shows that under the assumption $ 0<\kappa - (L/2) <\rho< \kappa$ we can consider a version of the BSSM with constant step size $\lambda _{k}:=2 (\kappa -\rho)/L$ for all $k\in \mathbb{N}$. Finally, note that we can always assume that $\sigma>L/2>0$ and $ 0<\kappa - (L/2) <\rho< \kappa$. In fact, since parameters $\rho$ and $\sigma$ are arbitrary positive constants (see Remark \ref{remark1} and Step 1 of Algorithm \ref{Alg:ASSPM}), if $\nabla f_{1}$ is Lipschitz continuous with constant $L_{1}>0$, then choosing $\sigma > L_{1}>0$ we have $\sigma >L/2$ and then $\kappa> \kappa -(L/2)>0$. Therefore, the choice of $\rho$ satisfying $0<\kappa - (L/2) <\rho< \kappa$ is always possible.
\end{remark}
%%%%%%%%%%%%%%%%%%%%%%%%
\subsection{Iteration-complexity bounds} \label{sec:complexity}
The aim of this section is to present some iteration-complexity bounds for the sequence $(x^{k}) _{k\in \mathbb{N}}$ generated by Algorithm~\ref{Alg:ASSPM}. To simplify the statements of next results we define the following positive constant 
\begin{equation} \label{eq:thetamu}
\lambda _{min}:=\min\left\{\lambda _{max}, \frac{2\kappa\zeta}{(L-\sigma +2\rho)}\right\}>0.
\end{equation} 
\begin{lemma}\label{le:lambdamin} 
For all $k\in \mathbb{N}$, $\zeta^{j_k}{\lambda _{max}}=:\lambda _{k}\geq \lambda _{min},$ where $j_{k}$ is given by \eqref{eq:jku}.
\end{lemma}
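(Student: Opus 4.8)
The plan is to establish a uniform threshold $\bar{\lambda}>0$ below which the Armijo test in \eqref{eq:jku} is automatically satisfied, and then to exploit the minimality of $j_k$ to turn this into a lower bound on $\lambda_k$. Concretely, I would show that the sufficient-decrease inequality $\phi(y^{k}+\lambda d^{k})\leq \phi(y^{k})-\rho\lambda^{2}\|d^{k}\|^{2}$ holds for every $\lambda\in(0,\bar{\lambda}]$ with $\bar{\lambda}:=2\kappa/(L-\sigma+2\rho)$. Note that $\lambda_{min}$ in \eqref{eq:thetamu} is precisely $\min\{\lambda_{max},\zeta\bar{\lambda}\}$, so this threshold is exactly tailored to the claim.

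To obtain the threshold I would fix $k$, pick any $v\in\partial h(y^{k})$, and bound $\phi(y^{k}+\lambda d^{k})-\phi(y^{k})$ from above by treating $g$ and $h$ separately. For $g$ I would apply the descent lemma (Lemma~\ref{eq:IneqLip}) at the point $y^{k}$ along the direction $d^{k}$, yielding the upper bound $\lambda\langle\nabla g(y^{k}),d^{k}\rangle+(L/2)\lambda^{2}\|d^{k}\|^{2}$; for $h$ I would use its strong convexity with modulus $\sigma$ via item~(ii) of Theorem~\ref{teo2} to get $h(y^{k}+\lambda d^{k})-h(y^{k})\geq\lambda\langle v,d^{k}\rangle+(\sigma/2)\lambda^{2}\|d^{k}\|^{2}$. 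Subtracting, the linear terms combine into $\lambda\langle\nabla g(y^{k})-v,d^{k}\rangle$ and the quadratic terms into $\tfrac{L-\sigma}{2}\lambda^{2}\|d^{k}\|^{2}$. Then the first statement of item~(iii) of Proposition~\ref{prop15u} replaces $\langle\nabla g(y^{k})-v,d^{k}\rangle$ by $-\kappa\|d^{k}\|^{2}$, giving $\phi(y^{k}+\lambda d^{k})-\phi(y^{k})\leq\lambda\bigl(\tfrac{L-\sigma}{2}\lambda-\kappa\bigr)\|d^{k}\|^{2}$. Demanding the right-hand side to be at most $-\rho\lambda^{2}\|d^{k}\|^{2}$ reduces (after dividing by $\lambda>0$) to the linear inequality $\bigl(\tfrac{L-\sigma}{2}+\rho\bigr)\lambda\leq\kappa$, i.e. $\lambda\leq\bar{\lambda}$, which is exactly what I need.

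Finally I would run the standard backtracking argument. If $j_k=0$ then $\lambda_k=\lambda_{max}\geq\lambda_{min}$ and we are done. If $j_k\geq 1$, the minimality of $j_k$ in \eqref{eq:jku} says that $\zeta^{\,j_k-1}\lambda_{max}=\lambda_k/\zeta$ fails the Armijo test; since the previous step shows the test holds throughout $(0,\bar{\lambda}]$, the rejected trial step must satisfy $\lambda_k/\zeta>\bar{\lambda}$, whence $\lambda_k>\zeta\bar{\lambda}\geq\lambda_{min}$. In both cases $\lambda_k\geq\lambda_{min}$, as claimed. I expect the only delicate point to be the sign and division bookkeeping in deriving $\bar{\lambda}$, together with the correct use of the contrapositive: it is precisely the fact that the test holds on \emph{all} of $(0,\bar{\lambda}]$ that forces the rejected trial step to exceed $\bar{\lambda}$. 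Everything else is routine.
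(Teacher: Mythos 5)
Your proof is correct and follows essentially the same route as the paper: both arguments combine the descent lemma for the $g$-part (the paper applies it to the affinely shifted function $\check f_k(x)=g(x)-\langle v,x-x^k\rangle$, which is equivalent to your direct application to $g$), strong convexity of $h$ via Theorem~\ref{teo2}(ii), and Proposition~\ref{prop15u}(iii), arriving at the same threshold $2\kappa/(L-\sigma+2\rho)$. The only difference is organizational—you establish that the Armijo test holds on all of $(0,\bar\lambda]$ and then contrapose, whereas the paper plugs the rejected trial step $\lambda_k/\zeta$ directly into the failed Armijo inequality—but the inequalities and conclusion are identical.
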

\begin{proof}
First note that, if $j_{k}=0$ then $\lambda _{k}={\lambda _{max}}$, and by \eqref{eq:thetamu} we have $\lambda _{k}\geq \lambda _{min}$. Assume that $j_{k}>0$. Since $\lambda _{k}=\zeta^{j_k}{\lambda _{max}}$ with $j_{k}$ given by \eqref{eq:jku}, we conclude
\begin{equation}\label{eq:lmin1}
\phi \left(y^{k} + \frac{\lambda _{k}}{\zeta}d^{k} \right)> \phi (y^{k}) - \frac{\lambda_{k}^{2}\rho }{\zeta^{2}}\|d^{k}\|^{2}.
\end{equation}
Take $v \in \partial h(y^{k})$ and consider the auxiliar convex function ${\check f}_{k}:\mathbb{R}^{n}\rightarrow \mathbb{R}$
given by
\begin{equation}\label{eq:sku}
 {\check f}_{k}(x)=g(x)-\langle v,x-x^{k} \rangle.
\end{equation}
Since $\nabla {\check f}_{k}(x)=\nabla g(x)-v$ and $\nabla g$ is Lipschitz continuous with constant $L>0$, $\nabla {\check f}_{k}$ is also Lipschitz continuous with constant $L>0$. Thus, using Lemma~\ref{eq:IneqLip} with $x=y^k$, $\lambda = \lambda _{k}/\zeta$ and $d=y^{k}-x^k$, we have
$$
{\check f}_{k} \left( y^{k}+\frac{\lambda_{k}}{\zeta} d^{k} \right)-{\check f}_{k}(y^{k}) \leq \frac{\lambda_{k}}{\zeta} \langle \nabla g(y^{k})-v, d^{k} \rangle + \frac{L}{2}\frac{\lambda_{k}^{2}}{\zeta^{2}}\| d^{k}\| ^{2},
$$
which together with the first statement in item~$(iii)$ of Proposition~\ref{prop15u} implies that 
$$
{\check f}_{k} \left( y^{k}+\frac{\lambda_{k}}{\zeta} d^{k} \right)-{\check f}_{k}(y^{k}) \leq -\frac{\kappa \lambda_{k}}{\zeta} \|d^{k}\|^{2} + \frac{L}{2}\frac{\lambda_{k}^{2}}{\zeta^{2}}\| d^{k}\| ^{2}.
$$
Thus, using \eqref{eq:sku} together with the strong convexity of $h$ and owing to $v \in \partial h(y^{k})$ and item~$(ii)$ of Theorem \ref{teo2} we obtain
\begin{align*}
g(y^{k}+\frac{\lambda_{k}}{\zeta} d^{k})-g(y^{k}) & \leq \langle v,(y^{k}+\frac{\lambda_{k}}{\zeta} d^{k})- y^{k} \rangle -\frac{ \kappa \lambda_{k}}{\zeta} \|d^{k}\|^{2} + \frac{L}{2}\frac{\lambda_{k}^{2}}{\zeta^{2}}\| d^{k}\| ^{2}\\
& \leq h(y^{k}+\frac{\lambda_{k}}{\zeta} d^{k})-h(y^{k}) -\frac{\sigma\lambda_{k}^{2}}{2\zeta^{2}} \|d^{k}\|^{2} -\frac{ \kappa \lambda_{k}}{\zeta} \|d^{k}\|^{2} \\
&\qquad+ \frac{L}{2}\frac{\lambda_{k}^{2}}{\zeta^{2}}\| d^{k}\| ^{2}.
\end{align*}
Considering that $ \phi=g-h$, the last inequality implies that 
\begin{equation}\label{eq:lmin2}
\phi ( y^{k}+\frac{\lambda_{k}}{\zeta} d^{k} )\leq \phi (y^{k}) -\frac{\sigma\lambda _{k}^{2}}{2\zeta^{2}} \|d^{k}\|^{2} -\frac{\kappa\lambda _{k}}{\zeta} \|d^{k}\|^{2} + \frac{L}{2}\frac{\lambda_{k}^{2}}{\zeta^{2}}\| d^{k}\| ^{2}.
\end{equation}
Combining \eqref{eq:lmin1} and \eqref{eq:lmin2} we get $-\frac{\sigma\lambda _{k}^{2}}{2\zeta^{2}} \|d^{k}\|^{2} -\frac{\kappa\lambda_{k}}{\zeta} \|d^{k}\|^{2} + \frac{L}{2}\frac{\lambda_{k}^{2}}{\zeta^{2}}\| d^{k}\| ^{2} > - \frac{\lambda_{k}^{2}\rho }{\zeta^{2}}\|d^{k}\|^{2}$.
Therefore, as $\lambda _{k}\neq 0,$ $d^{k}\neq 0$ and $\rho>0$, it follows from the last inequality that $\lambda_{k} > 2\kappa\zeta/(L-\sigma +2\rho)\geq \lambda_{min}$, for all $j_{k}>0$ which concludes the proof.
\end{proof}
\begin{theorem}\label{th:comp}
 For each $N\in\mathbb{N},$ we have
\begin{equation}\label{47}
\min \left\{\| y^{k}-x^{k}\|: ~k=0,1,\cdots,N\right\}\leq \sqrt{\frac{\phi(x^{0})-\phi^{*}}{(N+1)(\alpha+\rho\lambda _{min}^{2})}}.
\end{equation}
 Consequently, for a given $\epsilon>0$, Algorithm \ref{Alg:BPSM} takes at most $ (\phi (x^{0})-\phi ^{*})/[(\alpha +\rho\lambda _{min}^{2}) \epsilon ^{2} ]-1$
iterations to compute $x^k$ and $y^k$ such that $\| y^{k}-x^{k}\|\leq \epsilon$.
\end{theorem}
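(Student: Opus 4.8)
The plan is to run a standard telescoping argument on the monotone functional decrease along the iterates, after first replacing the iteration-dependent step size $\lambda_k$ by its uniform lower bound $\lambda_{min}$. First I would start from the per-iteration decrease already recorded in \eqref{eq:dscykcu}, namely $\phi(x^{k+1})\leq \phi(x^{k})-(\alpha+\rho\lambda_{k}^{2})\|d^{k}\|^{2}$, and invoke Lemma~\ref{le:lambdamin}, which guarantees $\lambda_{k}\geq \lambda_{min}>0$ for every $k$. Since $\rho>0$, this yields $\alpha+\rho\lambda_{k}^{2}\geq \alpha+\rho\lambda_{min}^{2}$, and hence the uniform estimate
\begin{equation*}
(\alpha+\rho\lambda_{min}^{2})\,\|d^{k}\|^{2}\leq \phi(x^{k})-\phi(x^{k+1}),\qquad \forall k\in\mathbb{N}.
\end{equation*}

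Next I would sum this inequality over $k=0,1,\ldots,N$. The right-hand side telescopes to $\phi(x^{0})-\phi(x^{N+1})$, which by (H2) is bounded above by $\phi(x^{0})-\phi^{*}$, giving $(\alpha+\rho\lambda_{min}^{2})\sum_{k=0}^{N}\|d^{k}\|^{2}\leq \phi(x^{0})-\phi^{*}$. Bounding the sum below by $(N+1)$ times its smallest term and recalling $d^{k}=y^{k}-x^{k}$ from Step~3, I obtain $(N+1)(\alpha+\rho\lambda_{min}^{2})\min_{0\leq k\leq N}\|y^{k}-x^{k}\|^{2}\leq \phi(x^{0})-\phi^{*}$. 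Dividing by the positive constant $(N+1)(\alpha+\rho\lambda_{min}^{2})$ and taking square roots then delivers \eqref{47} directly.

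For the consequence, I would simply demand that the right-hand side of \eqref{47} be at most $\epsilon$. Squaring and solving the inequality $\sqrt{(\phi(x^{0})-\phi^{*})/[(N+1)(\alpha+\rho\lambda_{min}^{2})]}\leq \epsilon$ for $N$ rearranges to $N+1\geq (\phi(x^{0})-\phi^{*})/[(\alpha+\rho\lambda_{min}^{2})\epsilon^{2}]$, which gives precisely the claimed iteration count for producing $x^{k}$ and $y^{k}$ with $\|y^{k}-x^{k}\|\leq \epsilon$.

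I do not foresee a genuine obstacle here: the argument is entirely routine once Lemma~\ref{le:lambdamin} is available. The only points requiring care are to confirm that $\alpha+\rho\lambda_{min}^{2}>0$ (immediate from $\alpha>0$ in \eqref{eq:defalfa} together with $\rho>0$ and $\lambda_{min}>0$), so that the division and square root are legitimate, and to note that the telescoping uses only the summability of the per-step drops guaranteed by the finiteness of $\phi^{*}$.
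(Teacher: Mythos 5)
Your proposal is correct and follows essentially the same route as the paper's own proof: both combine \eqref{eq:dscykcu} with Lemma~\ref{le:lambdamin} to obtain a uniform per-iteration decrease, telescope the sum over $k=0,\dots,N$, bound $\phi(x^{N+1})$ below by $\phi^{*}$ via (H2), and estimate the minimum by the average. The explicit resolution of the iteration count from \eqref{47} is exactly how the paper's ``immediate consequence'' is meant to be read.
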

\begin{proof}
Using \eqref{eq:dscykcu} and Lemma \ref{le:lambdamin} we have $$\phi (x^{k+1})\leq \phi (x^{k}) -(\alpha + \rho\lambda_k^2)\| d^{k}\| ^{2}\leq \phi (x^{k}) -(\alpha +\rho\lambda _{min}^{2})\| d^{k}\| ^{2}, \qquad \forall k\in \mathbb{N}.$$ Since $\phi^{*}:=\inf _{x\in C} \phi(x)\leq \phi (x^{k})$ for all $k\in\mathbb{N}$, it follows from the last inequality that 
$$
\sum _{k=0}^{N}\|y^{k}-x^{k}\|^{2} \leq \frac{1}{(\alpha+\rho\lambda _{min}^{2})}[\phi(x^{0})-\phi(x^{N+1})]\leq \frac{1}{(\alpha +\rho\lambda _{min}^{2})} \left[\phi(x^{0})-\phi^{*}\right].
$$
Therefore, $ (N+1)\min \{\| y^{k}-x^{k}\| ^{2}: ~k=0,1,\cdots,N\}\leq [\phi(x^{0})-\phi^{*}]/{(\alpha+\rho\lambda _{min}^{2})},$
and \eqref{47} follows. The second statement is an immediate consequence of the first one. 
\end{proof}
\begin{lemma} \label{eq:nfeas}
Let $j_{k}\in \mathbb{N}$ be the integer defined in \eqref{eq:jku}, and $J_{k}$ be the number of function evaluations in \eqref{eq:jku} after $k\geq 1$ iterations of Algorithm \ref{Alg:ASSPM}. Then, $j_{k}\leq [{\log (\lambda_{\min})-\log ({\lambda _{max}}) }]/{\log (\zeta)}. $
As a consequence, 
$$
J_{k}\leq (k+1)\left[ \frac{\log \left( \lambda_{\min} \right) - \log ({\lambda _{max}})}{\log (\zeta)} +1\right].
$$
\end{lemma}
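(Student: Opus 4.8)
The plan is to read off both estimates directly from Lemma~\ref{le:lambdamin}, which already supplies the crucial lower bound $\lambda_k = \zeta^{j_k}\lambda_{max} \geq \lambda_{min}$ for every $k\in\mathbb{N}$; the rest is elementary manipulation of logarithms together with a careful count of the trial evaluations in the Armijo search \eqref{eq:jku}.

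First I would establish the bound on $j_k$. Dividing $\zeta^{j_k}\lambda_{max} \geq \lambda_{min}$ by $\lambda_{max}>0$ gives $\zeta^{j_k} \geq \lambda_{min}/\lambda_{max}$. Taking logarithms yields $j_k \log(\zeta) \geq \log(\lambda_{min}) - \log(\lambda_{max})$, and since $\zeta\in(0,1)$ we have $\log(\zeta)<0$, so dividing by $\log(\zeta)$ reverses the inequality and produces
$$
j_k \leq \frac{\log(\lambda_{min}) - \log(\lambda_{max})}{\log(\zeta)},
$$
which is the first assertion. Note that this bound is nonnegative, as it must be, because $\lambda_{min}\leq\lambda_{max}$ makes the numerator nonpositive while the denominator is negative.

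Next I would count the function evaluations. By the definition of $j_k$ as the \emph{minimum} index for which the test in \eqref{eq:jku} holds, the inequality fails for $j=0,1,\ldots,j_k-1$ and succeeds at $j=j_k$; hence during the $k$-th line search the left-hand side $\phi\big(y^{k}+\zeta^{j}\lambda_{max}d^{k}\big)$ is evaluated exactly for $j=0,1,\ldots,j_k$, i.e.\ $j_k+1$ times. Summing over the iterations $0,1,\ldots,k$, that is over $k+1$ line searches, gives $J_k = \sum_{i=0}^{k}(j_i+1)$, and inserting the per-iteration bound from the first part yields
$$
J_k \leq (k+1)\left[\frac{\log(\lambda_{min}) - \log(\lambda_{max})}{\log(\zeta)} + 1\right],
$$
as claimed. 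I do not anticipate any real obstacle here: once Lemma~\ref{le:lambdamin} is available both inequalities are mechanical. The only points demanding attention are the reversal of the inequality when dividing by the negative quantity $\log(\zeta)$, and the bookkeeping convention that each backtracking search costs $j_k+1$ evaluations (the failed trials $j=0,\ldots,j_k-1$ plus the accepted one) rather than $j_k$, which is precisely what produces the additive $+1$ inside the bracket.
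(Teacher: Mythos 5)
Your proof is correct and follows essentially the same route as the paper: both deduce the bound on $j_k$ from the lower bound $\lambda_k=\zeta^{j_k}\lambda_{max}\geq\lambda_{min}$ of Lemma~\ref{le:lambdamin} by taking logarithms and using $\log(\zeta)<0$, and then bound $J_k=\sum_{\ell=0}^{k}(j_\ell+1)$ termwise. The only cosmetic difference is that you divide by $\lambda_{max}$ before taking logarithms, while the paper works with $\log(\lambda_k)=j_k\log(\zeta)+\log(\lambda_{max})$ directly; you also spell out the evaluation-count convention that the paper leaves implicit in the definition of $J_k$.
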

\begin{proof} For all $k\in \mathbb{N}$, we have $\lambda _{k}= \zeta ^{j_{k}}{\lambda _{max}}.$ On other hand, using Lemma \ref{le:lambdamin} we obtain that $0< \lambda_{\min}\leq \lambda _{k},$ for all $k\in \mathbb{N}$. Hence, $ \log \left( \lambda_{\min} \right) \leq \log (\lambda _{k})$, for all k $\in \mathbb{N}.$ Thus, using that $\lambda _{k} = \zeta ^{j_{k}}{\lambda _{max}}\leq {\lambda _{max}}$ we obtain that $\log (\lambda _{k})=j_{k} \log (\zeta) + \log ({\lambda _{max}})$. Since $\zeta \in (0,1)$ then $\log \zeta <0$, and hence, we conclude 
$$
 j_{k}= \frac{\log (\lambda _{k})-\log ({\lambda _{max}})}{\log \zeta} \leq \frac{\log (\lambda_{\min})-\log ({\lambda _{max}}) }{\log (\zeta)},
 $$
 which is the first inequality. To prove the second inequality, we first note that definition of $J_k$ implies that $J_{k}=\sum _{\ell=0}^{k}(j_{\ell}+1)$. Therefore, using the first inequality, we obtain
$$
\sum _{\ell=0}^{k}(j_{\ell}+1)\leq \sum _{i=0}^{k} \left[ \frac{\log (\lambda_{\min})-\log ({\lambda _{max}}) }{\log (\zeta)} +1\right] = (k+1)\left[ \frac{\log \left( \lambda_{\min}\right) - \log ({\lambda _{max}})}{\log (\zeta)} +1 \right]
$$
which implies is the desired inequality. 
\end{proof}
\begin{theorem}. \label{th:cbf}
For a given $\epsilon>0$, the number of function evaluations in Algorithm \ref{Alg:ASSPM} is at most 
 $$
 \left[ \frac{(\phi (x^{0})-\phi ^{*})}{(\alpha+\rho\lambda _{min}^{2}) \epsilon ^{2}}\right]\left(\frac{\log \left( \lambda_{\min} \right) - \log ({\lambda _{max}})}{\log (\zeta)} +1\right), 
 $$
 to compute $x^k$ and $y^k$ such that $\| y^{k}-x^{k}\|\leq \epsilon$.
\end{theorem}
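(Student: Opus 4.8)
The plan is to derive the bound by a direct multiplication of the two complexity estimates already proved: the iteration count of Theorem~\ref{th:comp} and the cumulative cost of the Armijo search recorded in Lemma~\ref{eq:nfeas}. No new analysis of the algorithm is needed; the statement is a corollary obtained by chaining these results together.

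First I would invoke Theorem~\ref{th:comp}. Given $\epsilon>0$, let $k$ denote the number of iterations needed to produce $x^{k}$ and $y^{k}$ with $\|y^{k}-x^{k}\|\leq \epsilon$. The theorem guarantees that $k\leq (\phi(x^{0})-\phi^{*})/[(\alpha+\rho\lambda_{min}^{2})\epsilon^{2}]-1$, and therefore $k+1\leq (\phi(x^{0})-\phi^{*})/[(\alpha+\rho\lambda_{min}^{2})\epsilon^{2}]$. Next I would apply Lemma~\ref{eq:nfeas} with this same $k$: recalling that $J_{k}=\sum_{\ell=0}^{k}(j_{\ell}+1)$ counts all function evaluations carried out by the line search \eqref{eq:jku} through iteration $k$, the lemma yields $J_{k}\leq (k+1)\,C$, where $C:=[\log(\lambda_{\min})-\log(\lambda _{max})]/\log(\zeta)+1$.

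Finally, I would substitute the iteration bound into $J_{k}\leq (k+1)\,C$. The only point requiring care is that this substitution is monotone-legitimate, which I would justify by checking $C\geq 0$: since $\lambda_{\min}\leq \lambda_{max}$ forces $\log(\lambda_{\min})-\log(\lambda_{max})\leq 0$ while $\zeta\in(0,1)$ forces $\log(\zeta)<0$, the quotient is nonnegative and hence $C\geq 1>0$. Consequently $k+1\leq (\phi(x^{0})-\phi^{*})/[(\alpha+\rho\lambda_{min}^{2})\epsilon^{2}]$ may be inserted into $(k+1)\,C$ without reversing the inequality, giving exactly the asserted estimate. I do not expect a substantive obstacle here; the argument is essentially bookkeeping, and the mild subtlety is that the ``$+1$'' in the iteration count of Theorem~\ref{th:comp} cancels neatly against the ``$+1$'' built into $J_{k}=\sum_{\ell=0}^{k}(j_{\ell}+1)=(k+1)\,C$, so the two $\lambda_{min}$-dependent factors combine into the stated product.
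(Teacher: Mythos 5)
Your proposal is correct and matches the paper's own proof, which is exactly the one-line combination of Lemma~\ref{eq:nfeas} with Theorem~\ref{th:comp}; you have merely spelled out the bookkeeping (the bound $k+1\leq (\phi(x^{0})-\phi^{*})/[(\alpha+\rho\lambda_{min}^{2})\epsilon^{2}]$, the factor $C\geq 1$, and the monotone substitution) that the paper leaves implicit.
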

\begin{proof}
The proof follows by combining Lemma~\ref{eq:nfeas} with Theorem~\ref{th:comp}.
\end{proof}
Similar results to Lemma~\ref{eq:nfeas} and Theorem~\ref{th:cbf} with respect to line search algorithms were obtained \cite{GrapigliaSachs2017}.
%%%%%%%%%%%%%%%%%%%%%%%%%%%%%%%%%
\subsection{Convergence analysis under K\L-property}\label{Sec:KL}
%%%%%%%%%%%%%%%%%%%%%%%%%%%%%%%%%
The aim of this section is to establish the full convergence for the sequence $(x^{k}) _{k\in \mathbb{N}}$ generated by Algorithm~\ref{Alg:ASSPM} under the Kurdyka-\L{}ojasiewicz property, which will be named K\L{}-property, for $\phi$. Moreover, convergence rates for the functional values of the generated sequence $(x^{k}) _{k\in \mathbb{N}}$ will be also presented. In the following, we introduce the definition of K\L{}-property.
\begin{definition}\label{def5}
Let $C^{1}[(0,+\infty)]$ be the set of all continually differentiable functions defined in $(0,+\infty)$, $f:\mathbb{R}^{n}\to \mathbb{R}$ be a locally Lipschitz function and $ \partial _{c} f(\cdot)$ be the Clarke's subdifferential of $f$. The function $f$ is said to have the Kurdyka-\L{}ojasiewicz property at $x^{*}$ if there exist $\eta\in (0,+\infty]$, a neighborhood $U$ of $x^{*}$ and a continuous concave function $\gamma : [0,\eta)\to \mathbb{R}_{+}$ (called desingularizing function) such that: $\gamma (0)=0,\; \gamma \in C^{1}[(0,+\infty)]$ and $\gamma'(t)>0$ for all $t\in (0,\eta)$. In addition, satisfies $\gamma '(f (x)-f (x^{*})) \mbox{dist}(0, \partial _{c} f(x))\geq 1$, for all $x\in U\cap \{x\in\mathbb{R}^{n}\;|\; f (x^{*})<f (x)<f (x^{*})+\eta \}$.
\end{definition}
Next remarks show that there exists a huge number of functions satisfying the K\L{}-property.
\begin{remark} S. \L{}ojasiewicz proved in 1963 that real-analytic functions satisfy an inequality of the above type with $\gamma(t) =t^{1-\theta}$ where $\theta \in [(1/2),1)$; see \cite{Loj1963}. 
\end{remark}
\begin{remark}
Let $A\subset \mathbb{R}^{n}$ and $B\subset \mathbb{R}^{n}\times \mathbb{R}$. The set $B$ is called semianalytic if each point of $\mathbb{R}^{n}\times \mathbb{R}$ admits a neighborhood $V\subset \mathbb{R}^{n}\times \mathbb{R}$ for which $B\cap V$ assumes the form as follows
$$
\bigcup _{i=1}^{p}\bigcap _{j=1}^Q \{ (x,y)\in V\; :\; f_{ij}(x,y)=0, \;g_{ij}(x,y)>0 \}, 
$$
where the functions $f_{ij},g_{ij}:V\to \mathbb{R}$ are real-analytic, for all $i=1,\cdots,p$ and $j=1,\cdots,q$. Then, the set $A$ is called subanalytic if each point of $\mathbb{R}^{n}$ admits a neighborhood $V\subset \mathbb{R}^{n}\times \mathbb{R}$ and $B\subset \mathbb{R}^{n}\times \mathbb{R}$ a bounded semianalytic subset such that $A\cap V=\{ x\in\mathbb{R}^{n}\;:\; (x,y)\in B\}$. Finally, a function $f : \mathbb{R}^{n} \to \mathbb{R}$ is called subanalytic if its graph is a subanalytic subset of $\mathbb{R}^{n}\times \mathbb{R}$. It is worth to point out that subanalytic functions that is continuous when restricted to its closed domain satisfies the K\L{}-property with desingularising function $\gamma(t) = D t^{\theta}/\theta$ with $D > 0$ and $\theta \in (0, 1]$. for more details see \cite[Theorem~3.1]{BOLTE}. For examples of subanalytic functions see e.g. \cite{attouch2010}, \cite{BOLTE} and \cite{bolte2005clarke}.
\end{remark}
Before the proof of the main results of this section, we will state and proof a lemma in which is considered the following constant 
 $$
 {\widehat M}:= (\alpha\beta _{min})/(\varpi(1+{\lambda _{max}})) >0.
 $$
\begin{lemma}\label{le:kll_2u} 
Suppose that $(x^{k}) _{k\in \mathbb{N}}$ has a cluster point $x^{*}$ and that $\phi$ satisfies the K\L{}-property at $x^{*}$ with desingularizing function $\gamma : [0,\eta)\to \mathbb{R}_{+}$, where $\eta\in (0,+\infty]$ and $U$ is a neighborhood of $x^{*}$ as in Definition \ref{def5}. Then, the following statements hold:
\begin{enumerate} 
\item[(i)] $\mbox{dist}(0,\partial _{c}\phi (x^{k})) \leq (\varpi /\beta _{min} ) \|y^{k}-x^{k}\|$, for all $k\in {\mathbb N}$. 
\item[(ii)] There exist an integer $N\in {\mathbb N}$ and a positive number $\varepsilon>0$, such that $x^{k}\in B(x^{*},\varepsilon)\subset U$ and $ \gamma '(\phi (x^k)-\phi (x^{*}))\mbox{dist}(0,\partial _{c} \phi (x^k))\geq 1$, for all $k\geq N$. As a consequence, 
\begin{align}\label{eq:iel}
\gamma (\phi(x^{k})-\phi (x^{*})) -\gamma (\phi(x^{k+1})-\phi (x^{*})) \geq {\widehat M} \|x^{k+1}-x^{k}\|, \qquad k\geq N.
\end{align}
\end{enumerate}
\end{lemma}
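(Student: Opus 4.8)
The plan is to treat the two items separately, with item~(ii) building on item~(i).

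For item~(i), I would exhibit one convenient element of the Clarke subdifferential and estimate its norm. By Theorem~\ref{subdif_DC}(ii) we have $\partial_{c}\phi(x^{k})=\{\nabla g(x^{k})\}-\partial h(x^{k})$, and since $w^{k}\in\partial h(x^{k})$ the vector $\nabla g(x^{k})-w^{k}$ lies in $\partial_{c}\phi(x^{k})$; hence $\mbox{dist}(0,\partial_{c}\phi(x^{k}))\leq\|\nabla g(x^{k})-w^{k}\|$. The characterization \eqref{eq:charyk} of $y^{k}$ gives $\nabla g(x^{k})-w^{k}=-\beta_{k}^{-1}H_{k}(y^{k}-x^{k})$, so it remains to bound $\|H_{k}(y^{k}-x^{k})\|$. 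Because $H_{k}$ is symmetric and satisfies \eqref{assumptionHk}, its largest eigenvalue is at most $\varpi$, whence $\|H_{k}d\|\leq\varpi\|d\|$ for every $d$; together with $\beta_{k}\geq\beta_{min}$ this yields $\|\nabla g(x^{k})-w^{k}\|\leq(\varpi/\beta_{min})\|y^{k}-x^{k}\|$, which is the claimed bound. I expect this step to be routine.

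For item~(ii), I would first record the easy preliminaries: $(\phi(x^{k}))$ is nonincreasing by \eqref{eq:dscykcu} and, by continuity of $\phi$ along the subsequence converging to $x^{*}$, converges to $\phi(x^{*})$ from above. I may assume $\phi(x^{k})>\phi(x^{*})$ for all $k$, since otherwise monotonicity and \eqref{eq:dscykcu} force the sequence to be eventually constant and equal to a critical point, making the claim trivial. The central computation is inequality \eqref{eq:iel}, which I obtain whenever the K\L{} inequality is legitimately available at $x^{k}$: concavity of $\gamma$ gives $\gamma(\phi(x^{k})-\phi(x^{*}))-\gamma(\phi(x^{k+1})-\phi(x^{*}))\geq\gamma'(\phi(x^{k})-\phi(x^{*}))\big(\phi(x^{k})-\phi(x^{k+1})\big)$; the descent estimate \eqref{eq:dscykcu} gives $\phi(x^{k})-\phi(x^{k+1})\geq\alpha\|y^{k}-x^{k}\|^{2}$; the K\L{} inequality combined with item~(i) gives $\gamma'(\phi(x^{k})-\phi(x^{*}))\geq\beta_{min}/(\varpi\|y^{k}-x^{k}\|)$; and the relation $\|x^{k+1}-x^{k}\|=(1+\lambda_{k})\|y^{k}-x^{k}\|\leq(1+\lambda_{max})\|y^{k}-x^{k}\|$ from Step~5 converts $\|y^{k}-x^{k}\|$ into $\|x^{k+1}-x^{k}\|$. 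Assembling these four facts produces precisely $\widehat{M}\|x^{k+1}-x^{k}\|$ on the right-hand side.

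The genuine obstacle is showing that the whole tail of the sequence remains in a ball $B(x^{*},\varepsilon)\subset U$ on which the K\L{} inequality applies, since the derivation of \eqref{eq:iel} presupposes $x^{k}\in U$, yet keeping the iterates in $U$ is exactly what must be established. The hard part I would resolve by the standard finite-length bootstrap. Fix $\varepsilon$ with $B(x^{*},\varepsilon)\subset U$; using that $x^{*}$ is a cluster point and that $\gamma(\phi(x^{k})-\phi(x^{*}))\to\gamma(0^{+})=0$, choose an index $N$ (along the convergent subsequence) so large that $\|x^{N}-x^{*}\|<\varepsilon/2$, that $\phi(x^{N})-\phi(x^{*})<\eta$, and that $\widehat{M}^{-1}\gamma(\phi(x^{N})-\phi(x^{*}))<\varepsilon/2$. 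Then I would argue by induction on $k\geq N$ that $x^{k}\in B(x^{*},\varepsilon)$: assuming $x^{N},\dots,x^{k}$ all lie in the ball, the upper function-value bound holds by monotonicity and the lower one by the assumption $\phi(x^{j})>\phi(x^{*})$, so \eqref{eq:iel} is valid for $j=N,\dots,k$; telescoping the sum and discarding the nonnegative term $\gamma(\phi(x^{k+1})-\phi(x^{*}))$ gives $\sum_{j=N}^{k}\|x^{j+1}-x^{j}\|\leq\widehat{M}^{-1}\gamma(\phi(x^{N})-\phi(x^{*}))<\varepsilon/2$, whence $\|x^{k+1}-x^{*}\|\leq\|x^{N}-x^{*}\|+\sum_{j=N}^{k}\|x^{j+1}-x^{j}\|<\varepsilon$. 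This closes the induction, simultaneously certifying that $x^{k}\in B(x^{*},\varepsilon)\subset U$ and that $\gamma'(\phi(x^{k})-\phi(x^{*}))\,\mbox{dist}(0,\partial_{c}\phi(x^{k}))\geq1$ holds for every $k\geq N$, and thereby validating \eqref{eq:iel} for all such $k$.
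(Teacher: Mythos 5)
Your proposal is correct and follows essentially the same route as the paper's proof: item~(i) via the characterization \eqref{eq:charyk} together with the eigenvalue bound from \eqref{assumptionHk}, and item~(ii) via the concavity-plus-descent estimate combined with the finite-length induction keeping the tail inside $B(x^{*},\varepsilon)\subset U$. The only differences are cosmetic — you split the radius as $\varepsilon/2+\varepsilon/2$ where the paper uses the single condition \eqref{eq19_2}, and you explicitly dispose of the degenerate case $\phi(x^{k})=\phi(x^{*})$ (finite termination), which the paper leaves implicit.
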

\begin{proof}
We first use item $(ii)$ of Theorem~\ref{subdif_DC} to obtain $\partial _{c}\phi (x^{k}) = \{\nabla g(x^{k})\}-\partial _{c}h(x^{k})$. Thus, we have that $\mbox{dist}(0,\partial _{c}\phi (x^{k})) := \inf _{v\in \partial _{c} \phi (x^{k})} \|v\| \leq \|\nabla g(x^{k})-w^{k}\|$. Hence, taking into account that \eqref{eq:charyk} and \eqref{assumptionHk} implies 
$
\| \nabla g (x^{k})-w^{k} \| = \|{H_{k}}(y^{k}-x^{k})\|/\beta _{k}\leq (\varpi/\beta _{min} )\|y^{k}-x^{k}\|,
$
for all $k\in \mathbb{N}$, the item~$(i)$ follows from two previous inequalities. To prove item~$(ii)$, take $(x^{k_{i}})_{i\in \mathbb{N}}$ a subsequence of $(x^{k})_{k\in \mathbb{N}}$ such that $\lim _{i\to \infty}x^{k_{i}}=x^{*}$. It follows from \eqref{eq:dscykcu} that $\phi (x^{k})> \phi (x^{k+1})>\phi(x^{*})$, for all $k\in \mathbb{N}$. Since $\lim _{i\to \infty}\phi(x^{k_{i}})=\phi(x^{*})$, we conclude that $\lim_{k\to \infty}\phi(x^k)=\phi(x^*)$. Hence, take $\varepsilon>0$ and an integer $N \in \mathbb{N}$ such that $B(x^{*},\varepsilon)\subset U$, 
$\phi(x^{*})<\phi (x^{N+1})<\phi (x^{N})<\phi(x^{*})+\eta$ and 
\begin{equation}\label{eq19_2}
\| x^{N}-x^{*}\| + \frac{1}{{\widehat M}} \gamma (\phi(x^{N})-\phi (x^{*}))< \varepsilon.
\end{equation}
To proceed with the proof, we prove by induction that $x^{k}\in B(x^{*},\varepsilon)$ for all $k\geq N$. For $k=N,$ using \eqref{eq19_2} the statement is valid. Now, suppose that $x^{k}\in B(x^{*},\varepsilon)$ for all $k=N+1, \cdots,N+\ell-1$ for some $\ell \in \mathbb{N}$. Note that $\phi(x^{*})<\phi (x^{k+1})<\phi (x^{k})<\phi(x^{*})+\eta$, for all $k=N+1, \cdots,N+\ell-1$. Hence, since $\phi$ satisfies the K\L{}-property at $x^{*}$ with desingularizing function $\gamma$, $\eta\in (0,+\infty]$ and neighborhood $U$, we have 
\begin{equation}\label{eq:klfi_22}
\gamma '(\phi (x^k)-\phi (x^{*})) \mbox{dist}(0,\partial _{c} \phi (x^k))\geq 1, \qquad k=N+1, \cdots,N+\ell-1.
\end{equation}
Thus, due to $\gamma$ be concave, combining \eqref{eq:dscykcu} with \eqref{eq:klfi_22} yields
\begin{align*}
\gamma (\phi(x^{k})-\phi (x^{*})) -\gamma (\phi(x^{k+1})-\phi (x^{*})) &\geq \gamma '(\phi(x^{k})-\phi (x^{*}))(\phi (x^{k})-\phi (x^{k+1})) \\
&\geq \frac{\alpha \| y^{k}-x^{k}\|^{2}}{\mbox{dist}(0,\partial_{c} \phi (x^{k}))}. 
\end{align*}
Hence, using the item~$(i)$, $x^{k+1}-x^k=(1+\lambda_k)(y^k-x^k)$ and $ \lambda_k\leq \lambda _{max}$ we conclude 
\begin{align}\label{eq31_2}
\gamma (\phi(x^{k})-\phi (x^{*})) -\gamma (\phi(x^{k+1})-\phi (x^{*})) \geq \frac{\alpha \beta _{min}}{\varpi (1+\lambda _ {k}) }\|x^{k+1}-x^{k}\| \geq{\widehat M}\|x^{k+1}-x^{k}\|, 
\end{align}
for all $k=N+1, \cdots,N+\ell-1$. Finally, let us prove that $x^{k}\in B(x^{*},\varepsilon)$ for $k=N+\ell$. Using \eqref{eq19_2} and \eqref{eq31_2}, some calculations show that 
\begin{align*}
\| x^{N+\ell}-x^{*}\| &\leq \| x^{N}-x^{*} \| + \sum _{i=1}^{\ell}\| x^{N+i}-x^{N+i-1}\|\\
 &\leq \| x^{N}-x^{*} \| + \frac{1}{\widehat M} \sum _{i=1}^{\ell}\left[ \gamma (\phi(x^{N+i-1})- \phi (x^{*})) -\gamma (\phi(x^{N+i})-\phi (x^{*})) \right] \\
 &= \| x^{N}-x^{*} \| + \frac{1}{\widehat M} \gamma(\phi(x^{N})-\phi (x^{*}))- \frac{1}{{\widehat M}} \gamma(\phi(x^{N+\ell})-\phi (x^{*}))\\
 &\leq \| x^{N}-x^{*} \| + \frac{1}{\widehat M} \gamma(\phi(x^{N})-\phi (x^{*}) <\varepsilon,
 \end{align*}
 which concludes the induction. Hence, $x^{k}\in B(x^{*},\varepsilon)\subset U $, for all $k\geq N$. Therefore, due to $\phi(x^{*})<\phi (x^{k+1})<\phi (x^{k})<\phi(x^{*})+\eta$, for all $k\geq N$ and $\phi$ satisfies the K\L{}-property at $x^{*}$ with desingularizing function $\gamma$, $\eta\in (0,+\infty]$ and neighborhood $U$, the first inequality of item~$(ii)$ holds. Consequently, similar arguments as in the proof of \eqref{eq31_2} gives \eqref{eq:iel}. 
\end{proof}
\begin{theorem} \label{conv3}
Suppose that $(x^{k}) _{k\in \mathbb{N}}$ has a cluster point $x^{*}$ and that $\phi$ has the K\L{}-property at $x^{*}$. Then $(x^{k}) _{k\in \mathbb{N}}$ converges to $x^{*}$, which is a critical point of $\phi$.
\end{theorem}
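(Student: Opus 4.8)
The plan is to run the standard Kurdyka--\L{}ojasiewicz convergence argument: I would first show that the series of step lengths $\sum_{k}\|x^{k+1}-x^{k}\|$ is summable, which forces $(x^{k})_{k\in\mathbb{N}}$ to be a Cauchy sequence and hence convergent; I would then use the hypothesis that $x^{*}$ is a cluster point to identify the limit as $x^{*}$; and finally I would invoke Proposition~\ref{coroprop15u} to conclude that $x^{*}$ is a critical point. The heavy lifting has already been done in Lemma~\ref{le:kll_2u}, so the remaining work is essentially a telescoping estimate.

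Concretely, Lemma~\ref{le:kll_2u} furnishes an integer $N\in\mathbb{N}$ and the telescoping inequality \eqref{eq:iel}, valid for all $k\geq N$. Summing \eqref{eq:iel} over $k=N,\ldots,M$, the right-hand side accumulates $\widehat{M}\sum_{k=N}^{M}\|x^{k+1}-x^{k}\|$, while the left-hand side telescopes to
\begin{equation*}
\gamma(\phi(x^{N})-\phi(x^{*})) - \gamma(\phi(x^{M+1})-\phi(x^{*})).
\end{equation*}
Since $\gamma$ takes values in $\mathbb{R}_{+}$, discarding the nonnegative subtracted term yields the uniform bound $\sum_{k=N}^{M}\|x^{k+1}-x^{k}\| \leq \gamma(\phi(x^{N})-\phi(x^{*}))/\widehat{M}$ for every $M\geq N$. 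Letting $M\to\infty$, the series $\sum_{k}\|x^{k+1}-x^{k}\|$ converges, so $(x^{k})_{k\in\mathbb{N}}$ is Cauchy and therefore converges to some $\bar{x}\in\mathbb{R}^{n}$.

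To finish, I would observe that since $x^{*}$ is assumed to be a cluster point of $(x^{k})_{k\in\mathbb{N}}$ and the whole sequence converges to $\bar{x}$, the limit and the cluster point must coincide, i.e. $\bar{x}=x^{*}$. The criticality of $x^{*}$ then follows immediately from item~$(ii)$ of Proposition~\ref{coroprop15u}. I do not expect any genuine obstacle here: the delicate part---showing that the iterates remain in the neighborhood $U$ where the K\L{}-inequality applies and deriving \eqref{eq:iel}---was precisely the content of Lemma~\ref{le:kll_2u}. The only point requiring a brief justification is that the subtracted term $\gamma(\phi(x^{M+1})-\phi(x^{*}))$ stays bounded (in fact tends to $0$), which is transparent from the continuity of $\gamma$, the normalization $\gamma(0)=0$, and the fact established in the proof of Lemma~\ref{le:kll_2u} that $\phi(x^{k})$ decreases to $\phi(x^{*})$.
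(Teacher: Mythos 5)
Your proposal is correct and follows essentially the same route as the paper: telescoping the inequality \eqref{eq:iel} from Lemma~\ref{le:kll_2u} to get summability of $\sum_k\|x^{k+1}-x^k\|$, concluding that the sequence is Cauchy with limit equal to the cluster point $x^*$, and invoking Proposition~\ref{coroprop15u} for criticality. The only difference is cosmetic: the paper first re-derives $\lim_k\phi(x^k)=\phi(x^*)$ and selects an auxiliary index $N_1\geq N$ with $\gamma(\phi(x^{N_1})-\phi(x^*))/\widehat{M}<\varepsilon$ before summing, whereas you sum directly from $N$ and simply discard the nonnegative term $\gamma(\phi(x^{M+1})-\phi(x^*))$, which is a slightly leaner but equivalent bookkeeping.
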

\begin{proof} Take $(x^{k_{i}})_{i\in \mathbb{N}}$ a subsequence of $(x^{k})_{k\in \mathbb{N}}$ such that $\lim _{i\to \infty}x^{k_{i}}=x^{*}$. It follows from \eqref{eq:dscykcu} that $\phi (x^{k})> \phi (x^{k+1})>\phi(x^{*})$, for all $k\in \mathbb{N}$. Since $\lim _{i\to \infty}\phi(x^{k_{i}})=\phi(x^{*})$, we conclude that $\lim_{k\to \infty}\phi(x^k)=\phi(x^*)$. Let $N\in \mathbb{N}$, $\varepsilon >0$ and ${\widehat M}$ be as in item~$(ii)$ of Lemma \ref{le:kll_2u}. Thus, $x^{k}\in B(x^{*},\varepsilon)\subset U$ and $$ \gamma '(\phi (x^k)-\phi (x^{*}))\mbox{dist}(0,\partial _{c} \phi (x^k))\geq 1,\qquad \forall k\geq N.$$ Since $\lim _{i\to \infty}x^{k_{i}}=x^{*}$ and $\lim_{k\to \infty}\phi(x^k)=\phi(x^*)$, there exists $N_{1}\in \mathbb{N},$ $N_{1}\geq N,$ such that
\begin{equation}\label{eq49a}
 \frac{1}{{\widehat M}} \gamma (\phi(x^{N_{1}})-\phi (x^{*}))< \varepsilon.
\end{equation}
Thus, using \eqref{eq:iel} we have 
$$
\| x^{k}-x^{k+1}\| \leq \frac{1}{\widehat M} \left[ \gamma (\phi(x^{k})-\phi (x^{*})) -\gamma (\phi(x^{k+1})-\phi (x^{*}))\right],\qquad \forall k\geq N_{1}.
$$
Summing up the last inequality from $k=N_{1}$ to $\ell>N_1$ and using \eqref{eq49a} we obtain that $\sum _{k=N_{1}}^{\ell}\| x^{k}-x^{k+1}\| \leq \frac{1}{\widehat M} \gamma(\phi (x^{N_{1}})-\phi (x^{*})) <\varepsilon$.
Taking the limit in last inequality with $\ell$ goes to $\infty$ we conclude that $\sum _{k=N_{1}}^{\infty }\| x^{k}-x^{k+1} \| <\infty,$
which proves that $ (x^k)_{k\in\mathbb{N}}$ is a Cauchy sequence. Hence, due to $x^{*}$ be a cluster point of $ (x^k)_{k\in\mathbb{N}}$, the whole sequence $ (x^k)_{k\in\mathbb{N}}$ converges to $x^{*}$. Therefore, by using Proposition ~\ref{coroprop15u}, we conclude that $x^{*}$ is a critical point of $\phi$.
\end{proof}
\begin{theorem} 
Suppose that $(x^{k}) _{k\in \mathbb{N}}$ has a cluster point $x^{*}$ and $\phi$ satisfies the K\L{}-property at $x^{*}$ with desingularizing function $\gamma(t) = {(D/\theta)}t^{\theta},$ where $D > 0$ and $\theta \in (0, 1]$. Then $(x^{k}) _{k\in \mathbb{N}}$ converges to $x^{*}$ and the following estimations hold:
\begin{itemize}
\item[(i)] If $\theta =1$, then $y^{k} = x^{k}$, for some $k\in \mathbb{N}$. As a consequence $(x^{k}) _{k\in \mathbb{N}}$ is finite. 
\item[(ii)] If $(1/2)\leq \theta < 1$, then there exist positive constants $c$ and $\tilde{C}$, and $N\in \mathbb{N}$ such that $\phi (x^{k+1}) -\phi(x^{*}) \leq \tilde{C} \exp\left(-c(k +1 - N )\right)$, for all $k\geq N$;
\item[(iii)]If $0<\theta < (1/2)$, then there exist a positive constant $\hat{S} > 0$ and $N\in \mathbb{N}$ such that $\phi (x^{k+1})-\phi (x^{*})\leq \hat{S}(k-N)^{\frac{-1}{1-2\theta}}$, for all $k\geq N$.
\end{itemize}
\end{theorem}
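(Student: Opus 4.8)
The plan is to reduce all three estimates to a single scalar recurrence and then analyse it case by case. Convergence of $(x^{k})_{k\in\mathbb{N}}$ to the critical point $x^{*}$ is already guaranteed by Theorem~\ref{conv3}, so only the rates are at stake. Write $r_{k}:=\phi(x^{k})-\phi(x^{*})$; by \eqref{eq:dscykcu} this sequence is nonincreasing and, as in the proof of Theorem~\ref{conv3}, $r_{k}\downarrow 0$ with $r_{k}>0$ as long as the algorithm has not stopped. For the present $\gamma$ one has $\gamma'(t)=D\,t^{\theta-1}$, so the K\L{} inequality furnished by Lemma~\ref{le:kll_2u}(ii) reads $\mathrm{dist}(0,\partial_{c}\phi(x^{k}))\geq \tfrac{1}{D}\,r_{k}^{\,1-\theta}$ for $k\geq N$. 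Combining this with the bound $\mathrm{dist}(0,\partial_{c}\phi(x^{k}))\leq(\varpi/\beta_{min})\|y^{k}-x^{k}\|$ of Lemma~\ref{le:kll_2u}(i) yields $\|y^{k}-x^{k}\|\geq \tfrac{\beta_{min}}{D\varpi}\,r_{k}^{\,1-\theta}$, and inserting this into the descent estimate \eqref{eq:dscykcu} (with $d^{k}=y^{k}-x^{k}$ and discarding the nonnegative $\rho\lambda_{k}^{2}$ term) produces the master recurrence
\begin{equation*}
r_{k}-r_{k+1}\ \geq\ C_{0}\,r_{k}^{\,2(1-\theta)},\qquad C_{0}:=\alpha\Big(\tfrac{\beta_{min}}{D\varpi}\Big)^{2}>0,\qquad k\geq N.
\end{equation*}

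For item~(i), where $\theta=1$ and the exponent $2(1-\theta)$ vanishes, I would argue by contradiction: if $y^{k}\neq x^{k}$ for every $k$, then $\phi$ is strictly decreasing, hence $\phi(x^{*})<\phi(x^{k})<\phi(x^{*})+\eta$ for all large $k$ and the K\L{} inequality applies throughout. Since $\gamma'(t)=D$ is constant, the two bounds above give $\|y^{k}-x^{k}\|\geq \beta_{min}/(D\varpi)>0$ for all $k\geq N$, contradicting $\lim_{k}\|y^{k}-x^{k}\|=0$ from Proposition~\ref{coroprop15u}(i). Thus $d^{k}=y^{k}-x^{k}=0$ for some $k$, the stopping test in Step~3 triggers, and the sequence is finite.

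For item~(ii), with $p:=2(1-\theta)\in(0,1]$, I would use that $r_{k}\downarrow 0$ to pass to indices $k$ large enough that $r_{k}\leq 1$, whence $r_{k}^{\,p}\geq r_{k}$ and the master recurrence gives $r_{k+1}\leq(1-C_{0})r_{k}$; after replacing $C_{0}$ by $\min\{C_{0},\tfrac{1}{2}\}$ (which only weakens the recurrence) the contraction factor lies in $(0,1)$, so iterating yields $r_{k+1}\leq(1-C_{0})^{\,k+1-N}r_{N}$, i.e. the exponential bound $\phi(x^{k+1})-\phi(x^{*})\leq\tilde C\exp(-c(k+1-N))$ with $\tilde C=r_{N}$ and $c=-\log(1-C_{0})>0$.

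Item~(iii), with $p>1$, is the delicate case and the one I expect to cost the most care. Here I would exploit the monotonicity of $t\mapsto t^{-p}$ on $[r_{k+1},r_{k}]$: since $t^{-p}\geq r_{k}^{-p}$ there,
\begin{equation*}
r_{k+1}^{\,1-p}-r_{k}^{\,1-p}=\int_{r_{k+1}}^{r_{k}}(p-1)t^{-p}\,dt\ \geq\ (p-1)r_{k}^{-p}(r_{k}-r_{k+1})\ \geq\ (p-1)C_{0},
\end{equation*}
where the last step uses the master recurrence. Telescoping from $N$ to $k$ gives $r_{k}^{\,1-p}\geq r_{N}^{\,1-p}+(p-1)C_{0}(k-N)\geq (p-1)C_{0}(k-N)$, and raising to the negative power $1/(1-p)$ reverses the inequality to $r_{k}\leq[(p-1)C_{0}(k-N)]^{1/(1-p)}$. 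Since $p-1=1-2\theta$ and $1/(1-p)=-1/(1-2\theta)$, and $r_{k+1}\leq r_{k}$, this is exactly the claimed bound $\phi(x^{k+1})-\phi(x^{*})\leq\hat S(k-N)^{-1/(1-2\theta)}$ with $\hat S=[(1-2\theta)C_{0}]^{-1/(1-2\theta)}$. The main obstacle is less any single inequality than the bookkeeping around the neighbourhood $U$ and threshold $\eta$ supplied by Lemma~\ref{le:kll_2u}: one must be sure the K\L{} inequality is legitimately invoked at every index entering the telescoping sums, i.e. that all relevant $x^{k}$ stay in $B(x^{*},\varepsilon)\subset U$ with $\phi(x^{*})<\phi(x^{k})<\phi(x^{*})+\eta$, which is precisely what the induction in Lemma~\ref{le:kll_2u}(ii) secures.
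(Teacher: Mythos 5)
Your proposal is correct and follows essentially the same route as the paper: the same master recurrence $r_{k}-r_{k+1}\geq C_{0}\,r_{k}^{2(1-\theta)}$ obtained from Lemma~\ref{le:kll_2u} and \eqref{eq:dscykcu}, a contradiction argument for $\theta=1$, a geometric contraction for $\theta\in[1/2,1)$, and the telescoped integral bound on $r_{k}^{2\theta-1}$ for $\theta\in(0,1/2)$. The only cosmetic differences (contradicting $\|y^{k}-x^{k}\|\to 0$ rather than $r_{k}\to 0$ in item (i), and writing the contraction as $1-C_{0}$ with capping instead of the paper's $1/(1+d)$ in item (ii)) do not change the argument.
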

\begin{proof} 
It follows from Theorem~\ref{conv3} that $(x^{k}) _{k\in \mathbb{N}}$ converges to $x^{*}$ and first statement is proved. To simplify the notation, set $r_{k}:=\phi (x^{k})-\phi (x^{*})$, for all $k\in\mathbb{N}$. Thus, \eqref{eq:dscykcu} implies that $(\phi(x^{k})) _{k\in \mathbb{N}}$ is non-increasing and $ r_{k} \geq \alpha \|y^k-x^k\|^2$. Hence, $\phi (x^{k})\geq \phi(x^{*})$ and $r_{k}\geq 0$, for all $k\in\mathbb{N}$. Moreover, since $(x^{k}) _{k\in \mathbb{N}}$ converges to $x^{*}$, we also have $\lim_{k\to \infty}r_{k}=0$. To prove item $(i)$, assume that $\lambda =1$. In addition assume by absurd that $r_{k}> 0$, for all $k\in\mathbb{N}$. Since $\phi$ has the K\L{}-property at $x^{*}$, it follows from item $(ii)$ of Lemma~\ref{le:kll_2u} that there exists an integer $N\in \mathbb{N}$ such that 
\begin{equation}\label{eq:50a}
\gamma '(r_{k})\mbox{dist} (0,\partial _{c} \phi (x^{k}))\geq 1, \qquad \forall k\geq N.
\end{equation}
Furthermore, from item $(i)$ of Lemma~\ref{le:kll_2u} we have 
\begin{equation}\label{eq:51}
\mbox{dist}(0,\partial _{c}\phi (x^{k})) \leq (\varpi /\beta _{min} ) \|y^{k}-x^{k}\|, \qquad \forall k\in \mathbb{N}.
\end{equation}
On the other hand, \eqref{eq:dscykcu} and definitions of $r_{k}$ and $d^k$ imply that $ r_{k}-r_{k+1}\geq \alpha \|y^k-x^k\|^2$, for all $ k\geq N$. Thus, using \eqref{eq:50a} and \eqref{eq:51} we obtain
\begin{align} \label{eq:52}
\gamma '(r_{k})^{2} (r_{k}-r_{k+1}) &\geq \gamma '(r_{k})^{2} \alpha \| y^{k}-x^{k}\| ^{2} \geq \gamma '(r_{k})^{2} \alpha \left[ \frac{\beta _{min}}{ \varpi } \mbox{dist }(0,\partial _{c}\phi (x^{k})\right] ^{2}\\
&\geq \frac{\beta _{min} ^{2}}{\alpha \varpi^2 } >0, \nonumber
\end{align}
for all $ k\geq N$. In this case, from \eqref{eq:52} we conclude 
$$D^{2}(r_{k}-r_{k+1}) \geq (\beta _{min}^{2}/(\alpha { \varpi^2 })>0,$$
for all $ k\geq N$, which contradicts the fact that $\lim_{k\to \infty}r_{k}=0$. Therefore, there exists $\bar{k} \geq N$ such that $r_{\bar{k}}=0$. Thus, taking into account that $ r_{k} \geq \alpha \|y^k-x^k\|^2$, the item~$(i)$ follows. To proceed with the prove of items~$(ii)$ and $(iii)$, we assume without loss of generality that $r_{k}> 0$, for all $k\in\mathbb{N}$. Thus, for $\theta \in (0,1)$, it follows from \eqref{eq:52} and $\gamma(t) = {(D/\theta)}t^{\theta},$ that
\begin{equation}\label{eq:53}
D^{2}(r_{k}-r_{k+1})\geq r_{k}^{(2-2\theta)} \frac{\beta _{min} ^{2}}{\alpha \varpi^2 }, \qquad \forall k\geq N.
\end{equation}
Let us prove item $(ii)$. Since $\theta \in [1/2, 1)$ and $\lim_{k\to \infty}r_{k} = 0$ (increasing $N$ if necessary), we have that $0 < 2 - 2\theta \leq 1$ and $r_{k}^{(2-2\theta)}\geq r_{k}\geq r_{k+1}$, for all $k\geq N$, respectively. Thus, \eqref{eq:53} together with last inequality implies that $r_{k+1}\leq r_{k}/(1+ d)$ for all $k\geq N,$ where $d=\beta _{min}^{2} /(\alpha \varpi^2 D^{2})>0$. Hence, one has
$$
r_{k+1}\leq r_{N}\left(\frac{1}{1+d}\right)^{(k-N+1)}=r_{N}\exp \left[ -(k+1-N)\log (1+d) \right].
$$
Therefore, letting $c=\log (1+d)$ and $\tilde{C}=r_{N}$, the statement $(ii)$ is proved. To prove item $(iii)$, assume that $\theta\in (0, 1/2)$. Set $m(t):=(D/(1-2\theta))t^{2\theta-1}.$ Then, due to $r_{k}\geq r_{k+1}$ we have 
\begin{equation}\label{eq:54}
m(r_{k+1})-m(r_{k})=\int _{r_{k}}^{r_{k+1}}m'(t)dt = D\int _{r_{k+1}}^{r_{k}}t^{2\theta-2}dt\geq D(r_{k}-r_{k+1})r_{k}^{(2\theta-2)}.
\end{equation}
Combining \eqref{eq:53} with \eqref{eq:54} we obtain that $m(r_{k+1})-m(r_{k})\geq \hat{c}>0$ for all $k\geq N,$ where $\hat{c}=\beta _{min}^{2}/(\alpha \varpi^2 D )>0$. This implies
$$
m(r_{k+1})\geq m(r_{k+1})-m(r_{N})= \sum _{i=N}^{k}m(r_{i+1})-m(r_{i})\geq \hat{c}(k-N).
$$	
Therefore, we have $(r_{k+1})^{2\theta -1}\geq(1-2\theta) \hat{c}(k-N)/D$, which is equivalent to 
$$
r_{k+1}\leq \left[ \frac{(1-2\theta)}{D} \hat{c}(k-N)\right] ^{\frac{1}{2\theta-1}}=\hat{S}(k-N)^{{\frac{1}{2\theta-1}}}, \qquad \quad \hat{S}= \left[ \frac{(1-2\theta)}{D} \hat{c}\right] ^{\frac{1}{2\theta-1}}, 
$$
and statement $(iii)$ is proved.
\end{proof}

\begin{theorem}
Suppose that $(x^{k}) _{k\in \mathbb{N}}$ has a cluster point $x^{*}$ and $\phi$ has the K\L{}-property at $x^{*}$ with desingularizing function $\gamma(t) = {(D/\theta)}t^{\theta},$ where $D > 0$ and $\theta \in (0, 1]$. Then $(x^{k}) _{k\in \mathbb{N}}$ converges linearly to $x^{*}$ with rate $(1-(1/p))$, where $p:=D^{2}(\phi (x^{0})-\phi (x^{*})){\varpi^2(1+{\lambda _{max}})}/{(\theta \beta _{min}^2\alpha)}$. 
\end{theorem}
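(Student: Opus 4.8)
The plan is to establish a one-step geometric contraction for the residuals $r_k:=\phi(x^k)-\phi(x^*)$ and then transfer it to the iterates. First I would collect the tools already proven. By Theorem~\ref{conv3} the whole sequence converges, $x^k\to x^*$; by \eqref{eq:dscykcu}, $r_k-r_{k+1}\ge\alpha\|y^k-x^k\|^2$, so $(r_k)$ is nonincreasing, nonnegative, $r_k\to 0$, and in particular $r_k\le r_0:=\phi(x^0)-\phi(x^*)$ for every $k$. From Lemma~\ref{le:kll_2u} I would use item~$(i)$, $\mbox{dist}(0,\partial_c\phi(x^k))\le(\varpi/\beta_{min})\|y^k-x^k\|$, the K\L{} inequality $\gamma'(r_k)\,\mbox{dist}(0,\partial_c\phi(x^k))\ge1$ (valid for $k\ge N$), and the telescoping bound \eqref{eq:iel}, $\gamma(r_k)-\gamma(r_{k+1})\ge\widehat{M}\|x^{k+1}-x^k\|$.

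Next I would exploit the explicit desingularizing function $\gamma(t)=(D/\theta)t^{\theta}$, for which $\gamma'(t)=Dt^{\theta-1}$ and the homogeneity identity $t\gamma'(t)=\theta\gamma(t)$ holds. Combining the K\L{} inequality with item~$(i)$ of Lemma~\ref{le:kll_2u} yields $\|y^k-x^k\|\ge \beta_{min}/(\varpi\gamma'(r_k))=(\beta_{min}/\varpi)\,r_k/(\theta\gamma(r_k))$, and the monotonicity $\gamma(r_k)\le\gamma(r_0)=(D/\theta)r_0^{\theta}$ then upgrades this to a bound that is \emph{linear} in the residual, $\|y^k-x^k\|\ge(\beta_{min}/(\varpi Dr_0^{\theta}))\,r_k$. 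Inserting this lower bound into \eqref{eq:iel} (using $\|x^{k+1}-x^k\|=(1+\lambda_k)\|y^k-x^k\|\ge\|y^k-x^k\|$ and $\widehat{M}=\alpha\beta_{min}/(\varpi(1+\lambda_{max}))$) should produce a contraction of the form $r_{k+1}\le(1-1/p)r_k$ for all $k\ge N$, where the constants $\alpha$, $\beta_{min}^2/\varpi^2$, $D^2$, $\theta$, $r_0$, and $(1+\lambda_{max})$ assemble into exactly $p=D^2r_0\varpi^2(1+\lambda_{max})/(\theta\beta_{min}^2\alpha)$.

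Finally I would iterate the contraction to get $r_{k}\le(1-1/p)^{k-N}r_N$, i.e.\ Q-linear decay of the function values, and transfer this to the iterates: summing \eqref{eq:iel} from $k$ to $\infty$ and telescoping gives $\|x^k-x^*\|\le\sum_{j\ge k}\|x^{j+1}-x^j\|\le\widehat{M}^{-1}\gamma(r_k)$, and since $\gamma(r_k)=(D/\theta)r_k^{\theta}$ inherits the geometric decay of $r_k$, the whole sequence $(x^k)$ converges to $x^*$ with the stated linear rate.

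The main obstacle I expect is the linearization step. The raw K\L{}/descent combination natively controls the decrease only through the power $r_k^{2-2\theta}$ (equivalently, $\gamma(r_k)-\gamma(r_{k+1})$ is bounded below only by a multiple of $r_k^{1-\theta}$), so turning it into a genuine geometric contraction uniformly over $\theta\in(0,1]$ is delicate: it hinges on deploying the a priori bound $r_k\le r_0$ to absorb the residual power $r_k^{1-\theta}$ into the constant $r_0^{\theta}$, and on carefully tracking every metric constant from \eqref{assumptionHk} together with the K\L{} data $(D,\theta)$ so that they collapse precisely onto $p$. Keeping the direction of each inequality correct through this collapse—rather than accumulating harmless but rate-destroying looseness (or, for small $\theta$, losing the contraction altogether)—is where the real care is required.
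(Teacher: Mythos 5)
Your preparatory steps are sound: the lower bound $\|y^k-x^k\|\ge(\beta_{min}/(\varpi D))\,r_k^{1-\theta}\ge\bigl(\beta_{min}/(\varpi D r_0^{\theta})\bigr)r_k$ (with $r_k:=\phi(x^k)-\phi(x^*)$, $r_0\ge r_k$) is a correct combination of the K\L{} inequality with item~(i) of Lemma~\ref{le:kll_2u}, and the transfer $\|x^k-x^*\|\le\widehat{M}^{-1}\gamma(r_k)$ is exactly the paper's telescoping step. The gap is the middle step. Feeding your linearized bound into \eqref{eq:iel} does \emph{not} give $r_{k+1}\le(1-1/p)r_k$; it gives
\[
\frac{D}{\theta}\left(r_k^{\theta}-r_{k+1}^{\theta}\right)\;\ge\;\widehat{M}\,\frac{\beta_{min}}{\varpi D r_0^{\theta}}\,r_k,
\qquad\text{i.e.}\qquad
r_k^{\theta}-r_{k+1}^{\theta}\;\ge\;\frac{r_0^{1-\theta}}{p}\,r_k .
\]
This lower-bounds the decrease of $r_k^{\theta}$, not of $r_k$, and for $\theta<1$ these live on different scales: by concavity of $t\mapsto t^{\theta}$ (tangent at $r_{k+1}$) the inequality only forces $r_k-r_{k+1}\ge \bigl(r_0^{1-\theta}/(\theta p)\bigr)\,r_k\,r_{k+1}^{1-\theta}$, a required decrease that is $o(r_k)$ as $r_k\to 0$. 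Concretely, the sequence $r_k=k^{-1/(1-\theta)}$ satisfies $r_k^{\theta}-r_{k+1}^{\theta}\ge c\,r_k$ for a positive constant $c$ and all large $k$, yet $r_{k+1}/r_k\to 1$; so the implication you need is false for every $\theta\in(0,1)$, and the $r_0$-absorption cannot rescue it, because the obstruction sits on the left-hand side of \eqref{eq:iel} (the $\gamma$-scale), not in the step-norm bound. This is consistent with the preceding theorem in the paper, whose item~(iii) shows that for $0<\theta<1/2$ the function values are only guaranteed to decay like $k^{-1/(1-2\theta)}$: no argument can deliver a function-value contraction uniformly over $\theta\in(0,1]$, and your scheme closes only at $\theta=1$. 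A secondary mismatch: even granting your contraction, your transfer yields $\|x^k-x^*\|\le\widehat{M}^{-1}(D/\theta)r_k^{\theta}$, whose geometric ratio would be $(1-1/p)^{\theta}$, not the stated $(1-1/p)$.

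The paper never claims linear decay of the residuals; its contraction is run on the tail sums $a_K:=\sum_{k\ge K}\|x^{k+1}-x^k\|$. Summing \eqref{eq:iel} gives $\|x^K-x^*\|\le a_K\le\bigl(D/(\theta\widehat{M})\bigr)r_K^{\theta}$ for $K\ge N$; separately, the K\L{} inequality together with the a priori bound $r_K\le r_0$ is used to dominate $r_K^{\theta}$ by a constant multiple of $\mbox{dist}(0,\partial_c\phi(x^K))$, and item~(i) of Lemma~\ref{le:kll_2u} bounds this distance by $(\varpi/\beta_{min})\|x^{K+1}-x^K\|=(\varpi/\beta_{min})(a_K-a_{K+1})$. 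Chaining the two estimates yields $a_K\le p\,(a_K-a_{K+1})$, hence $a_{K+1}\le(1-1/p)a_K$ and $\|x^K-x^*\|\le(1-1/p)^{K-N}a_N$. Contracting the summable tail of the step norms, rather than the function values, is the idea your proposal is missing: it is what lets the iterates converge linearly with the stated rate even though $\phi(x^k)-\phi(x^*)$ need not contract. (Be aware that the exponent bookkeeping in that K\L{} step is itself delicate: with $\gamma'(t)=Dt^{\theta-1}$ the K\L{} inequality reads $r_K^{1-\theta}\le D\,\mbox{dist}(0,\partial_c\phi(x^K))$, and converting it into a bound on $r_K^{\theta}$ by multiplying with $r_K^{2\theta-1}\le r_0^{2\theta-1}$ is clean precisely when $\theta\ge 1/2$ --- the same threshold that separates the linear and sublinear regimes in the preceding theorem.)
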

\begin{proof} By Theorem \ref{conv3} we know that $(x^{k}) _{k\in \mathbb{N}}$ converges to $x^{*}$. Now, consider $N$ as in item~$(ii)$ of Lemma \ref{le:kll_2u}. For $P\geq K\geq N$, summing up \eqref{eq:iel} from $k=K$ to $k=P$ and taking into account that $\gamma (t)=(D/\theta)t^{\theta}$ and $ \phi (x^{k+1})>\phi(x^{*})$, for all $k\in \mathbb{N}$, we have
\begin{align*}
\sum _{k=K}^{P}\|x^{k+1}-x^{k}\| &\leq \frac{D}{\theta \widehat{M}}\sum _{k=K}^{P} [\phi (x^{k})-\phi (x^{*}))^{\theta}]-[(\phi (x^{k+1})-\phi (x^{*}))^{\theta}]\\
&\leq \frac{D}{\theta \widehat{M}}(\phi (x^{K})-\phi (x^{*}))^{\theta}.
\end{align*}
Letting $P$ goes to $\infty$ in the last inequality we conclude 
\begin{equation}\label{tx:eq1}
\sum _{k=K}^{\infty}\|x^{k+1}-x^{k}\| \leq \frac{D}{\theta \widehat{M}}(\phi (x^{K})-\phi (x^{*}))^{\theta},\qquad \forall K\geq N.
\end{equation}
On the other hand, from triangle inequality we have
$\|x^{K}-x^{*}\|\leq \sum _{k=K}^{\ell-1}\|x^{k+1}-x^{k}\|+\|x^{\ell}-x^{*}\|.$
Due to $\lim _{k\to \infty} x^{k}=x^{*}$ and letting $\ell$ goes to $\infty$ in the last inequality, we obtain 
\begin{equation}\label{tx:eq2}
\|x^{K}-x^{*}\|\leq \sum _{k=K}^{\infty}\|x^{k+1}-x^{k}\|,\qquad \forall K\in \mathbb{N}.
\end{equation}
Now, define the nonnegative sequence $(a_{K})_{K\in \mathbb{N}}$ as $a_{K}:=\sum _{k=K}^{\infty}\|x^{k+1}-x^{k}\|$. Since $(x^{k}) _{k\in \mathbb{N}}$ converges to $x^{*}$, it follows from \eqref{tx:eq1} that $\lim _{K\to \infty} a_{K}= 0.$ Hence, \eqref{tx:eq2} shows that the rate of convergence of $(x^{k})_{k\in \mathbb{N}}$ to $x^{*}$ can be deduced from the convergence rate of $a_{k}$ to 0. Since $\gamma (t)=(D/\theta)t^{\theta}$, it follows from item~$(ii)$ of Lemma \ref{le:kll_2u} that 
\begin{align*}
D(\phi (x^{K})-\phi (x^{*}))^{1-\theta}\mbox{dist}(0, \partial _{c}\phi (x^{K}))\geq 1, \qquad \forall K\geq N,
\end{align*}
which implies that $D(\phi (x^{K})-\phi (x^{*})) \mbox{dist}(0, \partial _{c}\phi (x^{K}))\geq (\phi (x^{K})-\phi (x^{*}))^{\theta}$, for all $K\geq N$. 
Since $\phi (x^{K})\leq \phi (x^{0})$, for all $K\in \mathbb{N},$ the last inequality implies
$$
(\phi (x^{K})-\phi (x^{*}))^{\theta} \leq D(\phi (x^{0})-\phi (x^{*})) \mbox{dist}(0, \partial _{c}\phi (x^{K})),\qquad \forall K\geq N.
$$
Since from Step 5 we have $x^{k+1}=y^{k}+\lambda _{k}(y^{k}-x^{k})$, for all $k\in \mathbb{N}$, and using item~$(i)$ of Lemma \ref{le:kll_2u} we have
$$
\mbox{dist}(0, \partial _{c}\phi (x^{K}))\leq \frac{\varpi}{\beta _{min}}\|y^{K}-x^{K}\|\leq \frac{\varpi}{\beta _{min}}\|x^{K+1}-x^{K}\|,\qquad \forall K\in \mathbb{N}.
$$
The two previous inequalities yield
$$
(\phi (x^{K})-\phi (x^{*}))^{\theta} \leq D(\phi (x^{0})-\phi (x^{*}))\frac{\varpi}{\beta _{min}} \|x^{K+1}-x^{K}\|, \qquad \forall K\geq N.
$$
Hence, due to $\widehat M= (\alpha\beta _{min})/(\varpi(1+{\lambda _{max}}))$, combining \eqref{tx:eq1} and \eqref{tx:eq2} with the last inequality leads to
\begin{align*}
\|x^{K}-x^{*}\|&\leq a_{K} \leq \frac{D^{2}\varpi^2(1+{\lambda _{max}})}{\theta \beta _{min}^{2}\alpha}(\phi (x^{0})-\phi (x^{*}))(a_{K}-a_{K+1})\\
&=p(a_{K}-a_{K+1}),\quad \forall K\geq N.
\end{align*}
Therefore, $a_{K+1}\leq (1-1/p)a_K$, for all $K\geq N$, which implies the desired result.
\end{proof}

%%%%%%%%%%%%%%%%%%%%%%%%%%%%%%%%%%%%%%%%%%%%%%%%%%%%
\section{BSSM for DC programming with linear constraints} \label{Sec:BLCP}
In this section we deal with a linearly constrained DC problem of the following form 
\begin{equation}\label{eq:problcp}
\begin{array}{c}
\min \phi(x):=g(x)-h(x)\\
\mbox{s.t. } x\in C, 
\end{array}
\end{equation}
where the linear constrained set $C$ is stated as follows 
\begin{equation}\label{linear_const}
C:=\left\{x\in \mathbb{R}^{n}: ~ \langle a_{i},x \rangle \leq b_{i},\;i=1,\cdots,p. \right\},
\end{equation}
with $a_{i}\in\mathbb{R}^{n}$, $b_{i}\in\mathbb{R}$ for all $i=1,\cdots,p$. We also assume the assumptions (H1)-(H3). Inspired by definitions given in \cite{ARAGON2019} we define the notion of critical point to problem~\eqref{eq:problcp} as follows:
\begin{definition}\label{critpointc} A point $x^{*}\in C$ is critical of the Problem~\ref{eq:problcp} if there is $w^{*}\in\partial h(x^{*})$ such that
\begin{equation*}
\left\langle \nabla g(x^{*})-w^{*},x-x^{*} \right\rangle \geq 0, \qquad \forall x\in C.
\end{equation*}
\end{definition}
To introduce the algorithm to solve Problem~\ref{eq:problcp} we need of the following definition.
\begin{definition} Let $C:=\{x\in \mathbb{R}^{n}: ~ \langle a_{i},x \rangle \leq b_{i},\;i=1,\cdots,p. \},$
where $a_{i}\in\mathbb{R}^{n}$, $b_{i}\in\mathbb{R}$ for all $i\in {\cal I}:= \{1,2,\cdots,p\} $. The set of active constraints at the point $x\in C$ is given by
$$
{\cal I}(x):=\{i\in {\cal I}: ~ \langle a_{i},x \rangle = b_{i}\}.
$$
\end{definition} 
Next we present a boosted scaled subgradient projection algorithm for linearly constrained DC programming. The conceptual algorithm is as follows:

\begin{algorithm}
\caption{BSSM for DC programming with linear constraints}
\label{Alg:BPSM}
\begin{algorithmic}[1]
\STATE{Fix ${\lambda _{max}}>0$, $\rho>0$ and $\zeta \in (0,1)$. Choose an initial point $x_0\in C$ and a positive numbers $\beta _{min},\beta _{max}$ such that $0<\beta _{min}<\beta _{max} < \omega/(L-\sigma) $. Set $k=0$.}
\STATE{Choose $w^{k}\in\partial h(x^{k})$, $\beta _{k}\in [\beta _{min}, \beta _{max}]$, $H_{k}$ satisfying \eqref{assumptionHk} and compute $y^{k}$ the solution of the constrained quadratic problem
\begin{equation} \label{eq:BPSM}
\min _{x\in C} \psi _{k}(x):=\left \langle \nabla g(x^{k})-w^{k},x-x^{k} \right\rangle + \frac{1}{2\beta _{k}} \left\langle {H_{k}}(x-x^{k}),x-x^{k} \right\rangle.
\end{equation}}
\STATE{Set $ d^{k}:=y^{k}-x^{k}$. If $d^{k} =0$, then STOP and return $x^{k}$. }
\STATE{If ${\cal I}(y^{k})\subseteq {\cal I}(x^{k})$, then set $\xi _{k} ^{max}:=\min\{\epsilon_k, \lambda _{\max} \}>0$, where 
\begin{equation}\label{eq:aj1}
\epsilon_k:=
\left\{
\begin{array}{lcc}
\min \left\{ ( b_{i}-\langle a_{i},y^{k} \rangle)/|\langle a_{i}, d^{k} \rangle|: ~ i\in {\cal I}/ {\cal I}(x^{k}), \langle a_{i}, d^{k} \rangle\neq 0 \right\}, {\cal I}/ {\cal I}(x^{k})\neq \emptyset;\\
+\infty, \text{else}. 
\end{array}
\right.
\end{equation}
Set $\lambda _{k}= \zeta ^{j_{k}}\xi ^{max}_{k},$ where
\begin{align} 
j_k:=\min \left\{j\in {\mathbb N}: ~\phi( y^{k}+\zeta^{j} \xi ^{max}_{k}d^{k})\leq \phi (y^{k})-\rho \left(\zeta^{j}{\xi _{k} ^{max}}\right)^{2}\| d^{k}\| ^{2}\right\}. \label{eq:aj2}
\end{align}
Otherwise, if ${\cal I}(y^{k}) \not\subseteq {\cal I}(x^{k})$, then set $\lambda _{k}:=0$.}
\STATE{Set $x^{k+1}:=y^{k}+\lambda _{k}d^{k}$; set $k \leftarrow k+1$ and go to Step~2.}
\end{algorithmic}
\end{algorithm}
Denote by $(x^k)_{k\in\mathbb{N}}$ the sequence generated by Algorithm~\ref{Alg:BPSM}. Due to the set $C$ be convex and closed, and the matrix ${H_{k}}$ be positive defined, the Problem~\ref{eq:BPSM} has a unique solution $y^{k}$, which is characterized by 
\begin{equation}\label{eq:charxk+1}
\langle \nabla g (x^{k})-w^{k}+ \frac{1}{\beta _{k}}{H_{k}}(y^{k}-x^{k}),x-y^{k}\rangle \geq 0, \qquad \forall x\in C. 
\end{equation} 
Algorithm~\ref{Alg:BPSM} can be seeing as a boosted projected scaled subgradient method for DC constrained problems. Indeed, for each $k\in \mathbb{N}$, let $\| \cdot \| _{{H_{k}};\beta _{k} }: \mathbb{R}^{n}\rightarrow \mathbb{R}$ be the norm given by 
$$
\|d\|_{{H_{k}};\beta _{k}}=\sqrt{\frac{1}{\beta _{k}}\left\langle {H_{k}} d,d\right\rangle},\quad \forall d\in \mathbb{R}^{n}.
$$
Denote by $P_{C; k}(z):=\arg \min _{x\in C}\|x-z\|^2_{{H_{k}} ;\beta _{k}}$ the projection with respect to the norm $\| \cdot \| _{{H_{k}} ;\beta _{k} }$ of the point $z\in \mathbb{R}^{n}$ onto $C$. For each $k\in \mathbb{N}$ denote by
$ z^k:=x^{k}-\beta _{k} {H_{k}} ^{-1}\left(\nabla g(x^{k})-w^{k}\right). $
Consider the constrained optimization problem
\begin{equation}\label{problem_projsubg}
\min _{x\in C} \|x-z^k \| ^{2}_{ {H_{k}};\beta _{k}}. 
\end{equation}
Since $P_{C; k}(z^k)$ is the solution of Problem \ref{problem_projsubg}, we have $\|P_{C; k}(z^k)-z^k\| ^{2}_{{H_{k}};\beta _{k}}\leq \|x-z^k\| ^{2}_{{H_{k}} ;\beta _{k}}$ for all $x\in C$. Thus, after some calculus, we conclude that the last inequality is equivalent to $\psi _{k}(P_{C; k}(z^k))\leq\psi _{k} (x),$ for all $x\in C$, which shows that $y^{k}=P_{C; k}(z^k)$. Therefore, $y^{k} \in C$ is a projection onto $C$ with respect to the metric given by ${H_{k}}.$ In the sequel, we will prove that if $d^{k}=y^{k}- x^{k}\neq 0$, then $\phi(y^k)< \phi(x^k)$. In addition, if ${\cal I}(y_{k})\subseteq {\cal I}(x_{k})$, then $d^{k}$ is a descent direction of $\phi$ at $y^{k}$. Hence, one can achieve a larger decrease in the value of $\phi$ by given a suitable step in such direction from $y^{k}$, as in Step 4. Therefore, Algorithm~\ref{Alg:BPSM} can be seen as a boosted projected scaled subgradient method; see \cite{bonettini2019recent}, and the references therein, for a review of projected scaled subgradient method. 
\begin{remark}
If we have $a_{i}=b_{i}=0$ for all $i=1,\cdots,p$ in \eqref{linear_const}, then $C=\mathbb{R}^{n}$ and Algorithm \ref{Alg:BPSM} becomes Algorithm \ref{Alg:ASSPM}.
\end{remark} 
\subsection{Partial asymptotic convergence analysis}
The following result show that if $d^{k}\neq 0$ and ${\cal I}(y_{k})\subseteq {\cal I}(x_{k})$, then $d^{k}$ is a feasible direction to $C$ at $y^{k}$. 

\begin{lemma}\label{le:lema10}
Let $x^k\in C$, $y^k$ the solution of \eqref{eq:BPSM} and $\epsilon_k$ be defined by \eqref{eq:aj1}.
The following statements hold:
\begin{enumerate}
\item[(i)] If $d^{k}=0$, then $x^{k}$ is a critical point of Problem~\ref{eq:problcp}.
\item[(ii)] If $d^{k}\neq 0$ and ${\cal I}(y_{k})\subseteq {\cal I}(x_{k}),$ then $y^{k}+td^{k}\in C$, for all $t\in [0,\varepsilon_k]$.
\end{enumerate}
\end{lemma}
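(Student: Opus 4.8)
The plan is to verify membership in $C$ constraint by constraint in both items, relying on the variational characterization \eqref{eq:charxk+1} of $y^k$ and on the feasibility $y^k\in C$. For item~(i), I would substitute $y^k=x^k$ into \eqref{eq:charxk+1}: the term $\frac{1}{\beta_k}H_k(y^k-x^k)$ vanishes, and what remains is $\langle \nabla g(x^k)-w^k, x-x^k\rangle\geq 0$ for all $x\in C$, with $w^k\in\partial h(x^k)$. This is exactly the condition in Definition~\ref{critpointc}, so $x^k$ is critical; this item is immediate.

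For item~(ii), since $C$ is the intersection of the half-spaces $\langle a_i,\cdot\rangle\leq b_i$, it suffices to show $\langle a_i, y^k+td^k\rangle\leq b_i$ for every $i\in{\cal I}$ and every $t\in[0,\epsilon_k]$. I would split the index set according to whether $i$ is active at $x^k$. For $i\in{\cal I}(x^k)$, the equality $\langle a_i,x^k\rangle=b_i$ together with $y^k\in C$ gives $\langle a_i,d^k\rangle=\langle a_i,y^k\rangle-b_i\leq 0$, so $\langle a_i,y^k+td^k\rangle=\langle a_i,y^k\rangle+t\langle a_i,d^k\rangle\leq b_i$ for every $t\geq 0$. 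For $i\in{\cal I}\setminus{\cal I}(x^k)$ with $\langle a_i,d^k\rangle\leq 0$ the same estimate applies, while for $\langle a_i,d^k\rangle>0$ the definition \eqref{eq:aj1} of $\epsilon_k$ enforces $t\leq\epsilon_k\leq (b_i-\langle a_i,y^k\rangle)/\langle a_i,d^k\rangle$, again yielding $\langle a_i,y^k+td^k\rangle\leq b_i$.

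The step I expect to require the most care is confirming that $\epsilon_k>0$, which is where the hypothesis ${\cal I}(y^k)\subseteq{\cal I}(x^k)$ is indispensable: it guarantees that every $i\in{\cal I}\setminus{\cal I}(x^k)$ also lies outside ${\cal I}(y^k)$, whence $b_i-\langle a_i,y^k\rangle>0$ and each ratio appearing in \eqref{eq:aj1} is strictly positive. Without this subset condition an index active at $y^k$ but inactive at $x^k$ would force $\langle a_i,d^k\rangle>0$ with vanishing numerator, collapsing $\epsilon_k$ to $0$ and making $[0,\epsilon_k]$ degenerate; the subset condition thus certifies that $d^k$ is a genuine feasible direction from $y^k$. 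The rest is routine bookkeeping over the cases above.
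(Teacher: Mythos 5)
Your proposal is correct and follows essentially the same route as the paper's proof: item (i) by substituting $d^k=0$ into the variational characterization \eqref{eq:charxk+1} to recover Definition~\ref{critpointc}, and item (ii) by checking each half-space constraint, splitting indices into ${\cal I}(x^k)$ (where $y^k\in C$ forces $\langle a_i,d^k\rangle\leq 0$, so feasibility holds for all $t\geq 0$) and ${\cal I}\setminus{\cal I}(x^k)$ (where the definition of $\epsilon_k$ controls $t$), with the subset condition ${\cal I}(y^k)\subseteq{\cal I}(x^k)$ guaranteeing $\epsilon_k>0$ exactly as the paper argues. The only cosmetic difference is that you treat ${\cal I}(x^k)$ in one stroke, whereas the paper sub-splits it into indices active and inactive at $y^k$; this changes nothing of substance.
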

\begin{proof}To prove item $(i)$ recall that due to $y^{k}$ be the solution of Problem~\ref{eq:BPSM}, it satisfies \eqref{eq:charxk+1}. Thus, if $d^{k}=0$, then $y^{k}=x^{k}$ satisfies Definition \ref{critpointc}. Consequently, it is a critical point of Problem~\ref{eq:problcp}. We proceed with the proof of item $(ii)$. Take $i\in {\cal I}(x^{k})$. For any $i\in {\cal I}(y^{k})$, it holds that $\langle a_{i},d^{k}\rangle = \langle a_{i},y^{k}\rangle - \langle a_{i},x^{k}\rangle = b_{i}-b_{i}= 0.$ Otherwise, if $i\in {\cal I}(x^{k})/{\cal I}(y^{k})$ then $\langle a_{i},d^{k}\rangle = \langle a_{i},y^{k}\rangle - \langle a_{i},x^{k}\rangle = \langle a_{i},y^{k}\rangle-b_{i} <0.$ Thus, $ \langle a_{i},d^{k}\rangle \leq 0$, for all $i\in {\cal I}(x^{k})$. Since $y^{k}\in C$, it holds that $
\langle a_{i},y^{k}\rangle -b_{i}\leq 0$, for all $i\in {\cal I}$. Hence, it follows from two last inequalities that
\begin{equation}\label{eq:b1}
\langle a_{i},y^{k}+td^{k}\rangle -b_{i}\leq 0, \qquad \forall t \geq 0,\;\forall ~ i\in {\cal I}(x^{k}).
\end{equation} 
Take $i\in {\cal I}/ {\cal I}(x^{k})$. Since $y^k\in C$ and ${\cal I}(y_{k})\subseteq {\cal I}(x_{k})$, for each $i\in {\cal I}/ {\cal I}(x^{k})\subseteq {\cal I}/ {\cal I}(y^{k}) $ we have $\langle a_{i},y^{k} \rangle<b_{i}$. First note that, if $\langle a_{i}, d^{k} \rangle=0$ then, since $y^k\in C$, we have 
$$
\langle a_{i},y^{k}+t d^{k} \rangle -b_{i}=\langle a_{i},y^{k} \rangle -b_{i} \leq 0, \quad \forall t\geq 0. 
$$
On the other hand, if $\langle a_{i}, d^{k} \rangle\neq 0$ then, letting $\varepsilon _{i}:=( b_{i}-\langle a_{i},y^{k} \rangle)/|\langle a_{i}, d^{k} \rangle|>0$ we have 
\begin{equation}\label{eq:b2}
\langle a_{i},y^{k}+t d^{k} \rangle -b_{i}=\langle a_{i},y^{k} \rangle -b_{i} + t\langle a_{i}, d^{k} \rangle \leq 0, \qquad \;\forall \;t\in [0,\varepsilon _{i}], ~\forall i\in {\cal I}/ {\cal I}(x^{k}).
\end{equation} 
Therefore, the definition of $\varepsilon_k $ together with \eqref{eq:b1} and \eqref{eq:b2} yields to item $(ii)$. \end{proof}
Now we are in position to present the main result of this section, which constitutes the base of the boosted scaled subgradient projection method. 
\begin{proposition}\label{prop15} 
Let be $\alpha >0$ and $\kappa >0$ given by \eqref{eq:defalfa}. For each $k\in\mathbb{N}$, the following statements hold:
\begin{enumerate}
\item[(i)] There holds
\begin{equation}\label{eq:dsc}
\phi (y^{k})\leq \phi (x^{k})-\alpha \| d^{k}\| ^{2};
\end{equation}
\item[(ii)] $\langle \nabla g(y^{k})-v, d^{k} \rangle \leq -\kappa\Vert d^{k}\| ^{2}$, for all $v\in\partial h(y^{k})$, and $\phi ^{\circ}(y^{k};d^{k})\leq -\kappa \| d^{k}\|^{2}$; 
\item[(iii)] 
If $d^{k}\neq 0$, then $\lambda_k$ in Step 4 is well defined, i.e., there exists ${\delta}_{k}>0$ such that
$$
\phi (y^{k}+\lambda d^{k})\leq \phi (y^{k}) -\lambda^2\rho \| d^{k}\| ^{2},\qquad \forall~\lambda \in (0,{\delta}_{k}].
$$
\end{enumerate}
\end{proposition}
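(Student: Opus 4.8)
The plan is to follow the template of the unconstrained Proposition~\ref{prop15u}, since the three claims here are precisely items~$(ii)$--$(iv)$ of that result transplanted to the feasible set $C$. The only structural change is that $y^k$ is no longer characterized by the equality \eqref{eq:charyk} but by the variational inequality \eqref{eq:charxk+1}; hence the first task is to recover from this inequality an estimate playing the role of \eqref{eq:iemtu}. Testing \eqref{eq:charxk+1} with the feasible point $x=x^k\in C$ and writing $x^k-y^k=-d^k$ gives, after expanding the inner product,
\[
\langle \nabla g(x^k)-w^k,d^k\rangle \leq -\frac{1}{\beta_k}\langle H_k d^k,d^k\rangle \leq -\frac{\omega}{\beta_{max}}\|d^k\|^2,
\]
where the last step uses \eqref{assumptionHk} together with $\beta_k\leq\beta_{max}$. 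This is exactly the bound \eqref{eq:iemtu}, now obtained as an inequality rather than an equality; but only the upper estimate enters the subsequent arguments, so nothing is lost.

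With this in hand, item~$(i)$ follows the computation used for item~$(ii)$ of Proposition~\ref{prop15u}: I would introduce $\hat f_k(x)=g(x)-\langle w^k,x-x^k\rangle$, whose gradient is $L$-Lipschitz, apply Lemma~\ref{eq:IneqLip} with $\lambda=1$, $x=x^k$, $d=y^k-x^k$, substitute the displayed inequality, invoke the strong convexity of $h$ via item~$(ii)$ of Theorem~\ref{teo2}, and collect terms using $\phi=g-h$ and the definition of $\alpha$ in \eqref{eq:defalfa}.

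For item~$(ii)$, the idea is to combine three estimates along $d^k=y^k-x^k$: the Lipschitz bound $\langle \nabla g(y^k)-\nabla g(x^k),d^k\rangle\leq L\|d^k\|^2$; the strong-monotonicity bound $\langle w^k-v,d^k\rangle\leq -\sigma\|d^k\|^2$ for $v\in\partial h(y^k)$, from item~$(iii)$ of Theorem~\ref{teo2}; and the displayed inequality above. Adding these to the identity $\langle \nabla g(y^k)-v,d^k\rangle=\langle \nabla g(y^k)-\nabla g(x^k),d^k\rangle+\langle \nabla g(x^k)-w^k,d^k\rangle+\langle w^k-v,d^k\rangle$ produces $\langle \nabla g(y^k)-v,d^k\rangle\leq(L-\sigma-\omega/\beta_{max})\|d^k\|^2=-\kappa\|d^k\|^2$, using \eqref{eq:defalfa}. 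The directional-derivative statement then comes from writing $\phi^\circ(y^k;d^k)=\langle \nabla g(y^k),d^k\rangle-h'(y^k;d^k)$ through item~$(i)$ of Theorem~\ref{subdif_DC} and bounding $h'(y^k;d^k)\geq\langle v,d^k\rangle$ by item~$(ii)$ of Theorem~\ref{th:cdd}.

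Finally, item~$(iii)$ uses $d^k\neq0$ to sharpen item~$(ii)$ to $\phi^\circ(y^k;d^k)<-(\kappa/2)\|d^k\|^2$; the $\limsup$ definition of $\phi^\circ$ (Definition~\ref{def:dd}) then supplies some $\tilde\delta_k>0$ with $(\phi(y^k+\lambda d^k)-\phi(y^k))/\lambda\leq-(\kappa/2)\|d^k\|^2$ for $\lambda\in(0,\tilde\delta_k]$, and setting $\delta_k:=\min\{\tilde\delta_k,\kappa/(2\rho)\}$ converts this into the quadratic decrease; since $\zeta^j\xi_k^{max}\to0$, a finite $j_k$ satisfying \eqref{eq:aj2} exists, as in item~$(iv)$ of Proposition~\ref{prop15u}. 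I expect the only genuine subtlety to be bookkeeping rather than conceptual: one must check that the one-sided estimate extracted from \eqref{eq:charxk+1} is indeed the direction needed in every place where \eqref{eq:iemtu} was previously an equality, and — for feasibility of the generated iterate, though not for the inequality asserted in item~$(iii)$ itself — note that Lemma~\ref{le:lema10}$(ii)$ keeps $y^k+\lambda d^k\in C$ for $\lambda\leq\xi_k^{max}$ whenever ${\cal I}(y^k)\subseteq{\cal I}(x^k)$.
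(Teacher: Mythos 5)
Your proposal is correct and follows essentially the same route as the paper: the paper's (omitted) proof likewise tests the variational inequality \eqref{eq:charxk+1} at $x=x^k$ to obtain $\langle \nabla g(x^{k})-w^{k},y^{k}-x^{k}\rangle \leq -\tfrac{1}{\beta_k}\langle H_k d^k,d^k\rangle$ and then repeats the arguments of items~$(ii)$--$(iv)$ of Proposition~\ref{prop15u}, which is exactly what you spell out. Your observation that only the one-sided estimate (not the equality \eqref{eq:iemtu}) is ever used is precisely the point that makes the transplantation work, and your bookkeeping of the constants via \eqref{eq:defalfa} is accurate.
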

\begin{proof} 
Since \eqref{eq:charxk+1} implies that 
$$
\left\langle \nabla g (x^{k})-w^{k},y^{k}-x^{k}\right\rangle \leq -\frac{1}{\beta _{k} } \left\langle {H_{k}}(x^{k}-y^{k}),x^{k}-y^{k}\right\rangle, 
$$
the proof follows similar arguments as in the proof of items $(ii)$, $(iii)$ and $(iv)$ of Proposition \ref{prop15u}. Therefore will be omited.
\end{proof}
Note that Lemma~\ref{le:lema10} and Proposition~\ref{prop15} ensure that the sequence $(x^k)_{k\in\mathbb{N}}$ generated by Algorithm~\ref{Alg:BPSM} is well defined and 
$
\phi (x^{k+1})\leq \phi (y^{k}) -\lambda _{k}^{2}\|d^{k}\|^{2}$, for all $k\in \mathbb{N},
$
which together \eqref{eq:dsc} implies that $\phi (x^{k+1})\leq \phi (x^{k}) -(\alpha + \lambda _{k}^{2}) \| d^{k}\| ^{2}, \qquad \forall k\in \mathbb{N}$.
Next result establishes partial asymptotic convergence for the sequence $(x^{k}) _{k\in \mathbb{N}}$ generated by Algorithm~\ref{Alg:BPSM}.

\begin{proposition}\label{coroprop15}
 The following statements hold:
\begin{enumerate}
\item[(i)] $ \lim _{k\to \infty } \|y^{k}-x^{k}\|= 0$ and $\lim _{k\to \infty }\| x^{k+1}-x^{k}\| = 0$; 
\item[(ii)] Every cluster point of $(x^k)_{k\in\mathbb{N}}$, if any, is a critical point of Problem~\ref{eq:problcp}.
\end{enumerate}
\end{proposition}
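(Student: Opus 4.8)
The plan is to follow the structure of the proof of Proposition~\ref{coroprop15u}, adapting only the final limiting argument to the variational characterization \eqref{eq:charxk+1} and to the constrained notion of criticality in Definition~\ref{critpointc}. The two ingredients I will lean on are the descent inequality recorded just before the statement, which yields $\alpha\|y^{k}-x^{k}\|^{2}\leq \phi(x^{k})-\phi(x^{k+1})$ for every $k$ (since the extra $\lambda_{k}^{2}\|d^{k}\|^{2}$ term is nonnegative, this holds whether Step~4 sets $\lambda_{k}=0$ or runs the line search), and the feasibility/continuity machinery from Lemma~\ref{le:lema10} and Proposition~\ref{cont_subdif}.

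For item~$(i)$, I would argue exactly as in the unconstrained case. Because $(\phi(x^{k}))_{k\in\mathbb{N}}$ is non-increasing and bounded below by (H2), it converges; telescoping the inequality $\alpha\|y^{k}-x^{k}\|^{2}\leq \phi(x^{k})-\phi(x^{k+1})$ then forces $\lim_{k\to\infty}\|y^{k}-x^{k}\|=0$. Since $x^{k+1}-x^{k}=(1+\lambda_{k})(y^{k}-x^{k})$ and $\lambda_{k}\leq\lambda_{max}$, we get $\|x^{k+1}-x^{k}\|\leq(1+\lambda_{max})\|y^{k}-x^{k}\|$, so the first limit gives the second.

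For item~$(ii)$, let $\bar{x}$ be a cluster point and $(x^{k_{\ell}})_{\ell\in\mathbb{N}}$ a subsequence with $\lim_{\ell\to\infty}x^{k_{\ell}}=\bar{x}$. I would first check that every iterate lies in $C$: we have $y^{k}=P_{C;k}(z^{k})\in C$, and either $x^{k+1}=y^{k}\in C$ when $\lambda_{k}=0$, or, when $\lambda_{k}>0$, the bound $\lambda_{k}\leq\xi_{k}^{max}\leq\epsilon_{k}$ together with item~$(ii)$ of Lemma~\ref{le:lema10} keeps $x^{k+1}=y^{k}+\lambda_{k}d^{k}\in C$; closedness of $C$ then gives $\bar{x}\in C$. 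Item~$(i)$ yields $\lim_{\ell\to\infty}y^{k_{\ell}}=\bar{x}$, while Proposition~\ref{cont_subdif} applied to $w^{k_{\ell}}\in\partial h(x^{k_{\ell}})$ provides, along a further subsequence, $w^{k_{\ell}}\to\bar{w}\in\partial h(\bar{x})$. Finally, the uniform bound \eqref{assumptionHk} combined with $y^{k_{\ell}}-x^{k_{\ell}}\to 0$ and $\beta_{k_{\ell}}\geq\beta_{min}>0$ forces $\frac{1}{\beta_{k_{\ell}}}H_{k_{\ell}}(y^{k_{\ell}}-x^{k_{\ell}})\to 0$.

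The decisive step is to pass to the limit in \eqref{eq:charxk+1}. Fixing an arbitrary $x\in C$ and letting $\ell\to\infty$ in $\langle \nabla g(x^{k_{\ell}})-w^{k_{\ell}}+\frac{1}{\beta_{k_{\ell}}}H_{k_{\ell}}(y^{k_{\ell}}-x^{k_{\ell}}),\,x-y^{k_{\ell}}\rangle\geq 0$, and invoking continuity of $\nabla g$ together with the three convergences just established, I obtain $\langle \nabla g(\bar{x})-\bar{w},\,x-\bar{x}\rangle\geq 0$. Since $x\in C$ was arbitrary and $\bar{w}\in\partial h(\bar{x})$, this is exactly Definition~\ref{critpointc}, so $\bar{x}$ is critical for Problem~\ref{eq:problcp}. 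The main departure from Proposition~\ref{coroprop15u}, and the only point needing care, is that criticality is now an inequality quantified over all $x\in C$ rather than the inclusion $\nabla g(\bar{x})\in\partial h(\bar{x})$; this causes no difficulty because \eqref{eq:charxk+1} holds simultaneously for each fixed test point $x$, so the limit can be taken pointwise in $x$ with no uniformity required.
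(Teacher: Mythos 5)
Your proof is correct and takes essentially the same approach as the paper's: item $(i)$ by telescoping the descent inequality, and item $(ii)$ by passing to the limit, along a convergent subsequence, in the variational characterization \eqref{eq:charxk+1}, using Proposition~\ref{cont_subdif} for the subgradients and \eqref{assumptionHk} together with $\beta_{k}\geq\beta_{min}$ to kill the matrix term, exactly as the paper does by reusing the arguments of Proposition~\ref{coroprop15u}. Your explicit check that all iterates, and hence the cluster point, lie in $C$ (needed for Definition~\ref{critpointc}) is a detail the paper leaves implicit, but it does not constitute a different route.
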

\begin{proof}
The proof of item~$(i)$ is similar to the proof of item $(i)$ of Proposition~\ref{coroprop15u}. To prove item~$(ii)$ assume that ${\bar x}$ is a cluster point of $(x^k)_{k\in\mathbb{N}}$, and let $(x^{k_{\ell}})_{\ell\in \mathbb{N}}$ be a subsequence of $(x^k)_{k\in\mathbb{N}}$ such that $\lim _{\ell\to \infty}x^{k_{\ell}}={\bar x}$. Let $(w^{k_{\ell}})_{\ell\in \mathbb{N}}$ and $(y^{k_{\ell}})_{\ell\in \mathbb{N}}$ be associated subsequences to $(x^{k_{\ell}})_{\ell\in \mathbb{N}}$, i.e., $w^{k_{\ell}}\in\partial h(x^{k_{\ell}})$. Thus, it follows from \eqref{eq:charxk+1} that $y^{k_{\ell}}$ satisfies 
$$
\langle \nabla g (x^{k_{\ell}})-w^{k_{\ell}}+\frac{1}{\beta _{k_{\ell}} }{H_{k_{\ell}}}(y^{k_{\ell}}-x^{k_{\ell}}),x-y^{k_{\ell}}\rangle \geq 0,\qquad \forall x\in C.
$$
Now, using the same arguments as in the proof of item $(ii)$ of Proposition~\ref{coroprop15u} the desired result follows. 
\end{proof}
%%%%%%%%%%%%%%%%%%%%%%%%%%%%%%%%%
\subsection{Full convergence for quadratic objective functions} \label{sec:qpp}
The aim of this section is to present the full convergence of $(x^{k}) _{k\in \mathbb{N}}$ generated by Algorithm~\ref{Alg:BPSM} when the objective function $\phi$ is a quadratic function given by $\phi (x)= (1/2)\langle Qx,x \rangle +\langle q,x\rangle ,$ where $Q$ is a $n \times n$ symmetric matrix and $q\in \mathbb{R}^{n}$. Hence, our problem takes the following form:
\begin{eqnarray}\label{eq:qof}
\begin{array}{l}
\min \phi (x):=\frac{1}{2} \langle Qx,x \rangle +\langle q,x\rangle \\
\mbox{s.t.} \quad x\in C =\{x\in \mathbb{R}^{n}\;|\; \langle a_{i},x \rangle \leq b_{i},\;i=1,\cdots,p. \},
\end{array}
\end{eqnarray}
where $a_{i}\in\mathbb{R}^{n}$, $b_{i}\in\mathbb{R}$ for all $i\in \mathcal{I} =\{ 1,\cdots,p\}$. Since $Q$ is not required to be positive semidefinite, Problem \ref{eq:qof} is in general a non-convex problem. However, the matrix $Q$ can be decomposed as $Q = Q_{1}- Q_{2}$, with $Q_{1}$ and $Q_{2}$ positive definite. Indeed, if $\lambda _{Q}$ is the largest eigenvalue of $Q$, we can take $\nu > \max \{0,\lambda _{Q}\}$ and then we can define 
$Q_{1}:=\nu I$ and $ Q_{2}:=(\nu I-Q),$ where $I$ denotes the identity matrix of rank $n$. Defining $f_1(x):=(\nu/2)\|x\|^{2}+\langle q,x\rangle$ and $f_2(x):=(1/2)\langle (\nu I-Q)x,x \rangle = (\nu/2)\|x\|^{2}-(1/2)\langle Qx,x \rangle$ we have $\phi=f_1-f_2$. Now, take a constant $\sigma >0$ and define $g(x)= f_1(x)+ (\sigma/2)\|x\|^{2}$ and $h(x)= f_2(x)+(\sigma/2)\|x\|^{2}$. Hence, $g$ and $h$ are strongly convex functions with modulus $\sigma$. 
Moreover, $g$ satisfies (H3) with $L=\nu+\sigma >\sigma $. Therefore, Problem \ref{eq:qof} can be equivalently written in the form of Problem \ref{eq:problcp} with $g$ and $h$ defined above. In this case, the unique solution of the subproblem~\eqref{eq:BPSM} is given by $y^{k} = P_{C; k}(x^{k}-\beta _{k} {H_{k}} ^{-1}(\nabla \phi (x^{k})))$. On the other hand, taking in Algorithm~\ref{Alg:BPSM} the following parameters $\omega=\sigma$, $\beta _{k}=1 $ and scale matrix $H_{k}\equiv {\sigma I}$, we have $y^{k} = P_{C}(x^{k}- ({1}/{\sigma})\nabla \phi (x^{k}))$, which is the unique solution of BDCA's subproblem given in \cite{ARAGON2020}. Therefore, denoting by $\mathcal{C}^{*}$ the set of critical points of Problem \ref{eq:qof} we have the following result.
\begin{theorem}[{\cite[Theorem 4.1]{ARAGON2020}}]
Assume that $\mathcal{C}^{*}\neq \emptyset$. 
The sequence $(x^{k})_{k\in\mathbb{N}}$ generated by algorithm \ref{Alg:BPSM} converges geometrically to a critical point $x^{*}$ of \eqref{eq:qof}; that is, there exist constants $\tilde{M} > 0$ and $\mu \in (0, 1)$ such that $\|x^{k}-x^{*}\|\leq \tilde{M}\mu ^{k},$ for all large $k$.
\end{theorem}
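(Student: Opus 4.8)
The plan is to prove this by reduction: I would show that, with the stated parameter choices, the sequence $(x^{k})_{k\in\mathbb{N}}$ produced by Algorithm~\ref{Alg:BPSM} coincides with the one generated by the boosted DC algorithm analyzed in \cite{ARAGON2020}, and then invoke \cite[Theorem~4.1]{ARAGON2020} directly. This is the natural route since the statement is itself quoted from that reference; the substantive work is the identification of the two schemes.

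First I would carry out the subproblem identification. Since $\phi$ is quadratic, the function $h(x)=f_2(x)+(\sigma/2)\|x\|^{2}$ is differentiable, so $\partial h(x^{k})=\{\nabla h(x^{k})\}$ and the only admissible choice is $w^{k}=\nabla h(x^{k})=(\nu I-Q+\sigma I)x^{k}$. A direct computation then gives $\nabla g(x^{k})-w^{k}=(\nu+\sigma)x^{k}+q-(\nu I-Q+\sigma I)x^{k}=Qx^{k}+q=\nabla\phi(x^{k})$, so the scaled generalized subgradient reduces to the genuine gradient of $\phi$. With $H_{k}\equiv\sigma I$ and $\beta_{k}=1$, the objective in \eqref{eq:BPSM} becomes $\langle\nabla\phi(x^{k}),x-x^{k}\rangle+(\sigma/2)\|x-x^{k}\|^{2}$, whose unique minimizer over $C$ is exactly $y^{k}=P_{C}(x^{k}-(1/\sigma)\nabla\phi(x^{k}))$, matching the BDCA subproblem recalled in the paragraph preceding the statement.

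Next I would match the boosting step. By Proposition~\ref{prop15}, valid because the constructed $g,h$ satisfy (H1)--(H3) with $L=\nu+\sigma>\sigma$, the direction $d^{k}=y^{k}-x^{k}$ is a descent direction for $\phi$ at $y^{k}$ whenever $d^{k}\neq 0$, and by Lemma~\ref{le:lema10} it stays feasible along $[0,\varepsilon_{k}]$ when ${\cal I}(y^{k})\subseteq{\cal I}(x^{k})$. I would verify that the active-set feasibility test, the step cap $\xi_{k}^{max}=\min\{\epsilon_{k},\lambda_{max}\}$ given by \eqref{eq:aj1}, and the backtracking Armijo rule \eqref{eq:aj2} reproduce the line search employed in \cite{ARAGON2020}, so that the boosted iterate $x^{k+1}=y^{k}+\lambda_{k}d^{k}$ agrees with theirs. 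Once the two iterative schemes are shown to generate the same sequence, the geometric convergence to a critical point $x^{*}\in\mathcal{C}^{*}$ is immediate from \cite[Theorem~4.1]{ARAGON2020}, under the standing hypothesis $\mathcal{C}^{*}\neq\emptyset$.

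I expect the main obstacle to lie not in the algebraic subproblem identification, which is routine, but in the precise matching of the line search. One must check that the feasibility-preserving step cap and the particular Armijo constant $\rho$ are consistent with the exact hypotheses under which \cite[Theorem~4.1]{ARAGON2020} is established, so that the cited convergence result applies verbatim rather than requiring a re-derivation of its geometric-rate estimate in the present notation.
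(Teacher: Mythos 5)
Your proposal is correct and follows essentially the same route as the paper: the paper likewise establishes the theorem by observing that, with $w^{k}=\nabla h(x^{k})$, $\beta_{k}=1$, $\omega=\sigma$ and $H_{k}\equiv\sigma I$, the subproblem \eqref{eq:BPSM} reduces to $y^{k}=P_{C}\bigl(x^{k}-(1/\sigma)\nabla\phi(x^{k})\bigr)$, i.e.\ the BDCA subproblem of \cite{ARAGON2020}, and then invokes \cite[Theorem~4.1]{ARAGON2020} directly. Your explicit verification that $\nabla g(x^{k})-w^{k}=Qx^{k}+q=\nabla\phi(x^{k})$ spells out a computation the paper leaves implicit, and your caution about matching the Armijo line search is reasonable but is exactly the identification the paper also takes for granted.
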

%%%%%%%%%%%%%%%%%%%%%%%%%%%
\section{Numerical Experiments} \label{Sec:NumExp}
To evaluate the performance of the proposed algorithm BSSM and to compare it with others algorithms for DC functions we consider some preliminaries test problems. The algorithms were coded in MATLAB R2020b on a notebook 8 GB RAM Core i7 and the results are presented in the next figures. We compare our method with the classical DC Algorithm (DCA~\cite{Pham1986}), a boosted DCA with backtracking (BDCA~\cite{ARAGON2017, ARAGON2019, SunSampaio2003}) and a proximal linearized method for DC functions (PLM~\cite{Moudafi2006,SOUZA2016}). In the quadratic subproblems in BSSM, we take $\beta_k\equiv \beta$ fixed and $H_k$ the identity matrix in all iterations, and hence, \eqref{eq:charyk} provides the solution of each subproblem. In the other methods, the subproblems were solved using ``fminsearch" with the optionset(`\,TolX\,',1e-7,`\,TolFun\,',1e-7). The stopping rule of the outer loop in all methods is $||x^{k+1}-x^k||< 10^{-7}$. We set $\lambda_{max}=0.8$, $\zeta=0.1$ and $\rho=0.001$ in the Armijo search and $\lambda_k=0.01$, for all $k\in\mathbb{N}$, in the PLM. 
%%%%%%%%%%%%%%%%%%%%%%%%%%%%
\subsection{Academical examples}
In this section we consider two academical examples from the literature on DC programming. The first is \cite[Example 2.1]{ARAGON2019} and the second one is \cite[Problem 10]{KaisaBagirovKarmitsa2017}.
\begin{example}\label{ex1}
Let $\phi:\mathbb{R}^n\to \mathbb{R}$ be a non-differentiable DC function given by $$\phi(x)=||x||^2+\sum_{i=1}^{n}x_i- \sum_{i=1}^{n}|x_i|,$$ where $g(x)=\frac{3}{2}||x||^2+\sum_{i=1}^{n}x_i$ and $h(x)=\sum_{i=1}^{n}|x_i|+\frac{1}{2}||x||^2$. The DC component $g$ is a differentiable and strongly convex function with Lipschitz continuous gradient and $h$ is a non-differentiable and strongly convex function. It is not difficult to check that $\phi$ has $2^n$ critical points, namely, any $x\in\{-1,0\}^n$, and only $x^*=(-1,-1,\ldots,-1)$ is the global minimum of $\phi$. We denote the minimal value of $\phi$ by $\phi^*=\phi(x^*)$. 
\end{example}
In Example~\ref{ex1}, we run the algorithms BSSM, DCA, BDCA and PLM 100 times using random initial point in $[-10,10]^n$ for dimension $n=2$, $n=10$, $n=50$ and $n=100$. In BSSM, we take $\beta_k=0.3$. Figures~\ref{fig1} and \ref{fig2} show the value of $||\phi(x^k)-\phi^*||$ and $||x^{k+1}-x^k||$ for one particular random instance, respectively. It is worth to mention that the value $||\phi(x^k)-\phi^*||$ does not converge to zero in Figures~\ref{fig1:a}, \ref{fig1:b} and \ref{fig1:d} for DCA and/or PLM because sometimes these methods converge to stationaries points which are not minimal points. In this example, the methods BSSM and BDCA always converge to global minimum. Our numerical experiments indicate that BSSM outperforms the other methods. To demonstrate that the advantage shown in Figures~\ref{fig1} and \ref{fig2} are not unusual, we ran all the algorithms from 100 different random starting points computing the shortest, longest and median of CPU time (in seconds) and number of iterations as indicated in Figure~\ref{fig3}. In this error bar graph is presented the variation of minimum and maximum value with the mark denoting the median. As we can see, the method BSSM underperforms the other methods in CPU time for dimension $n=2$ but still needs fewer iterates until the stopping rule is satisfied. For dimension $n=10$, $n=50$ and $n=100$ BSSM takes less time and iterates than the other methods. An important novelty of BSSM is that it remains stable in both CPU time and number of iterates for different dimension unlike the other methods. 
\begin{figure}[h!]
\centering
\subfloat[$n=2$]{\label{fig1:a}\includegraphics[width=0.25\linewidth]{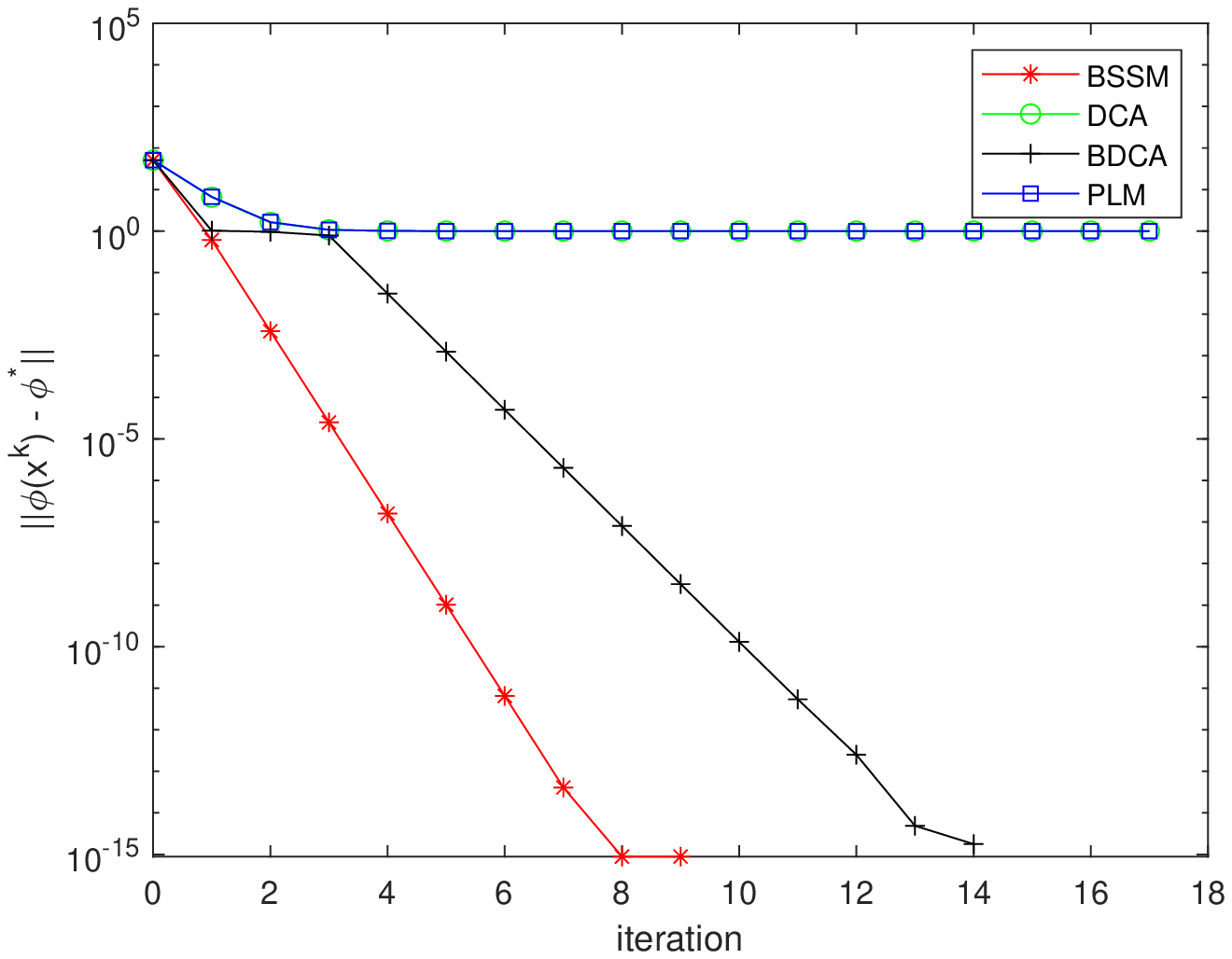}}
\subfloat[$n=10$]{\label{fig1:b}\includegraphics[width=0.25\linewidth]{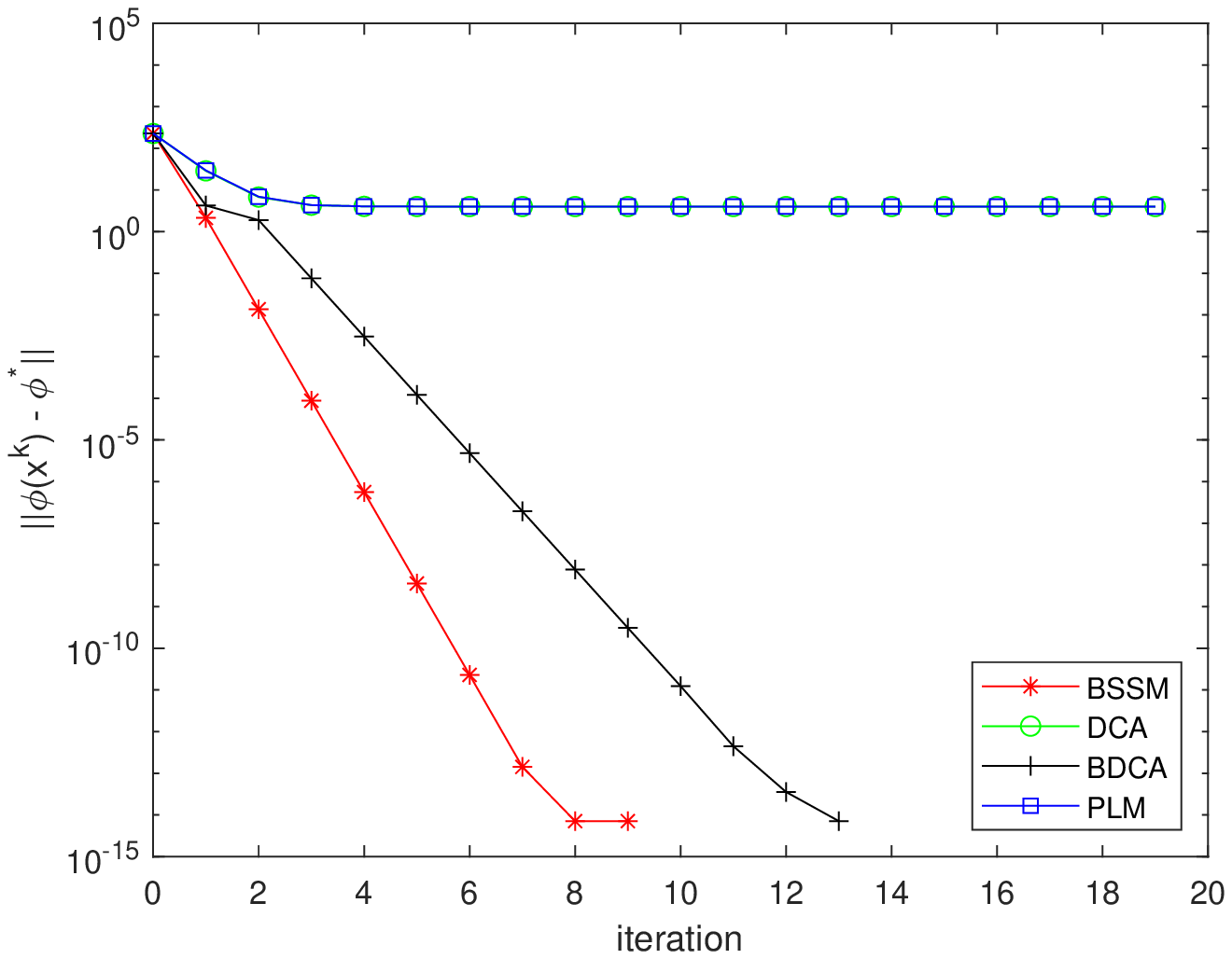}}
\subfloat[$n=50$]{\label{fig1:c}\includegraphics[width=0.25\textwidth]{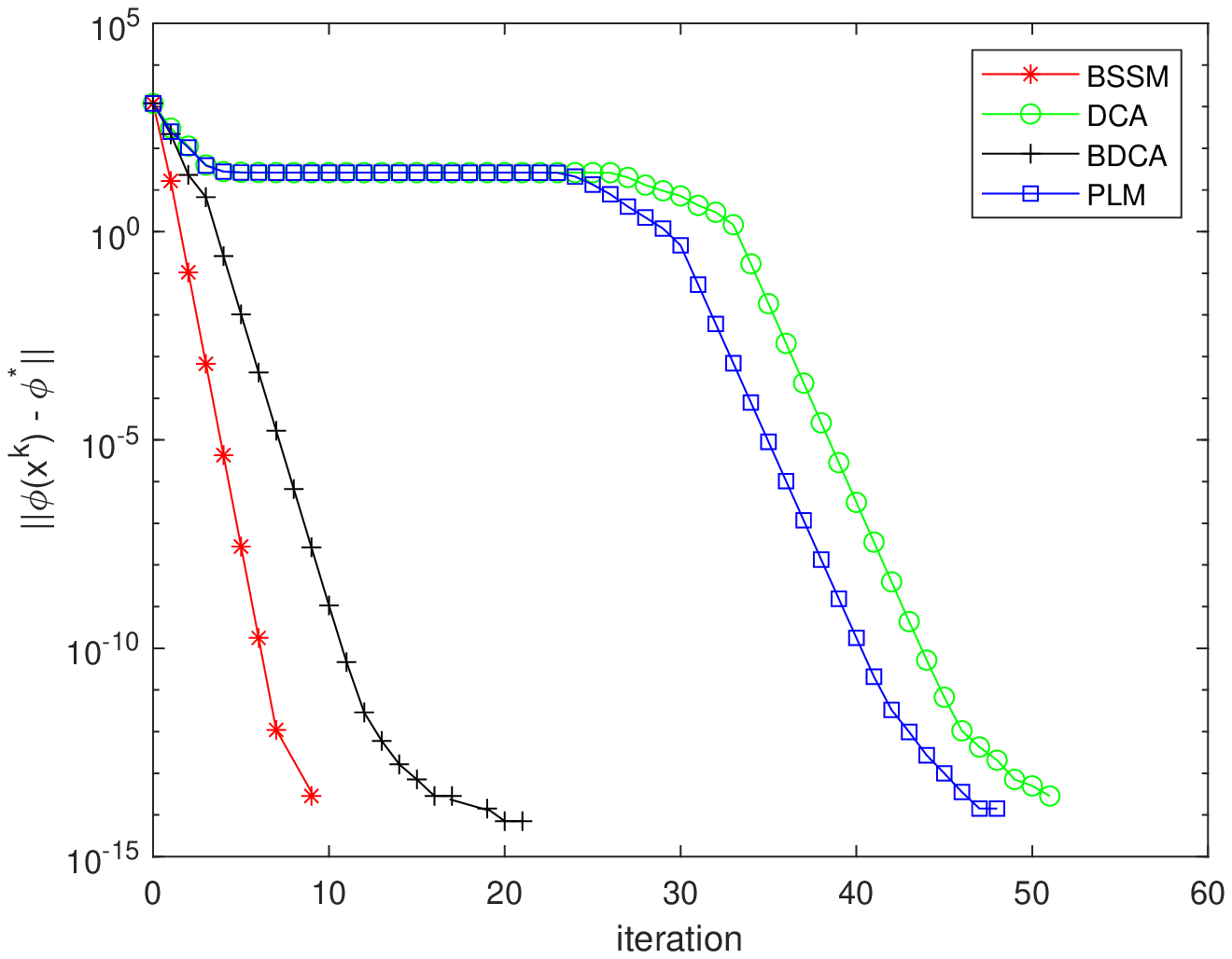}}
\subfloat[$n=100$]{\label{fig1:d}\includegraphics[width=0.25\textwidth]{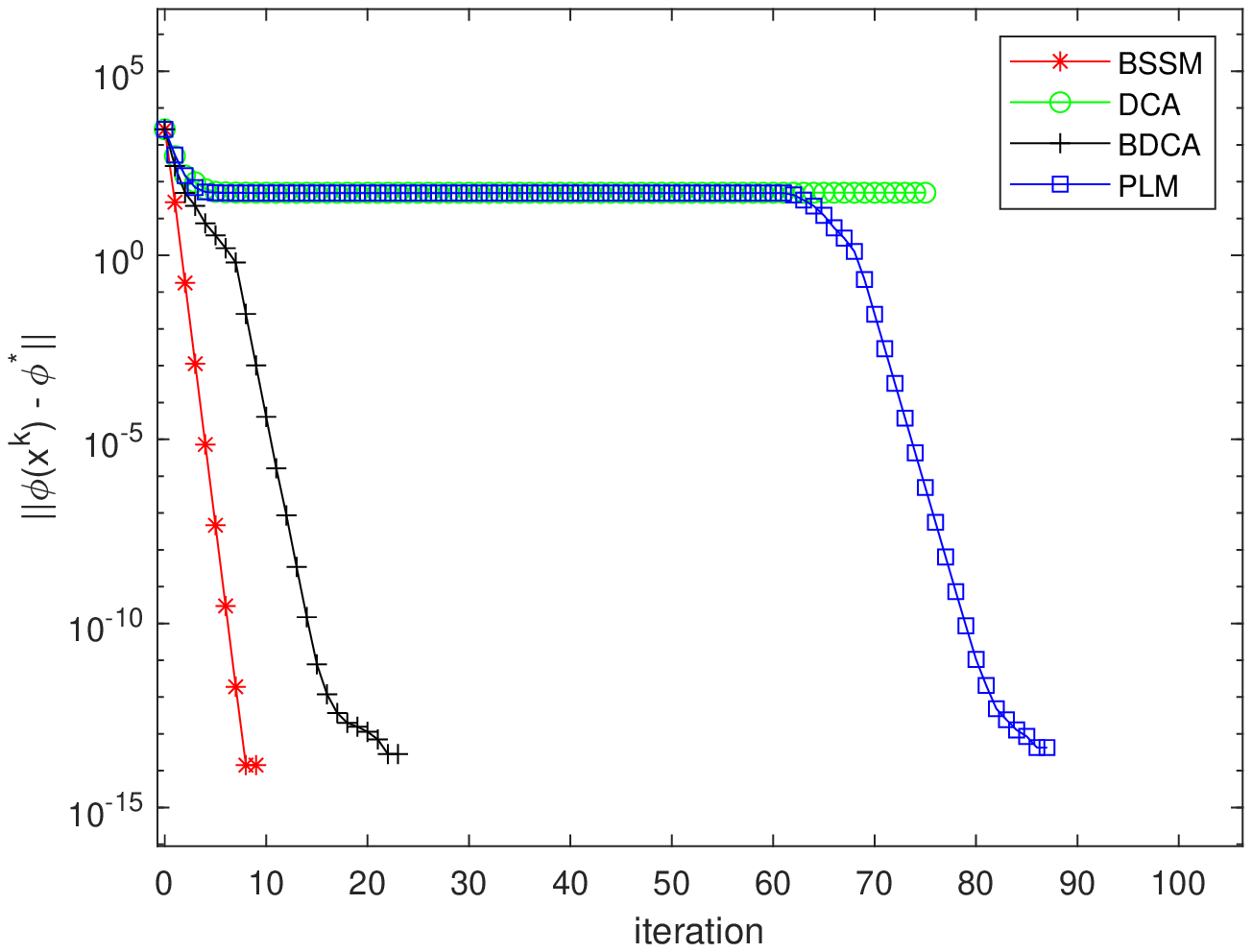}}%
\caption{Value of $||\phi(x^k)-\phi^*||$ (using logarithmic scale) for different dimensions in Example~\ref{ex1}.}
\label{fig1}
\end{figure}
\begin{figure}[h!]
\centering
\subfloat[$n=2$]{\label{fig2:a}\includegraphics[width=0.25\linewidth]{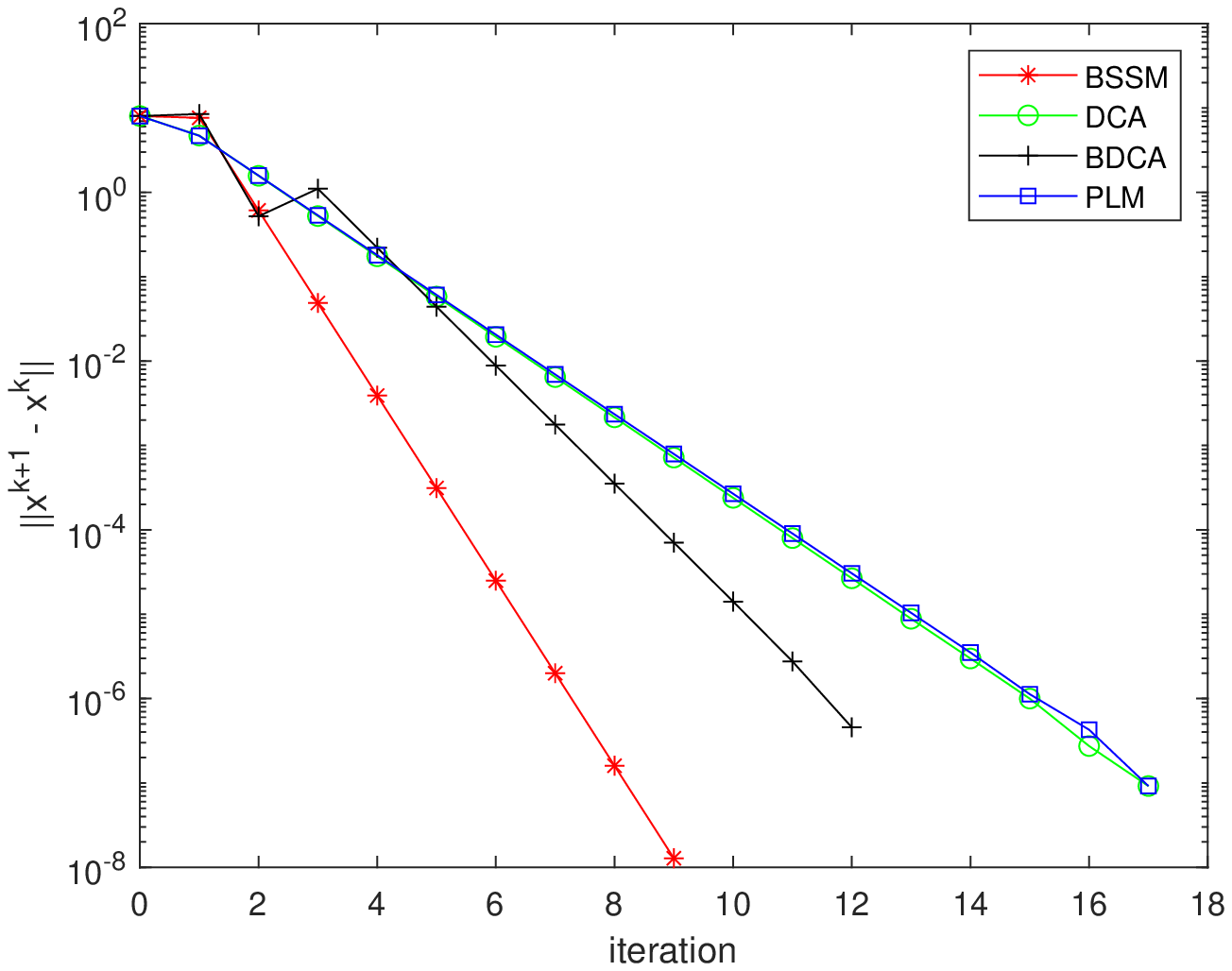}}
\subfloat[$n=10$]{\label{fig2:b}\includegraphics[width=0.25\linewidth]{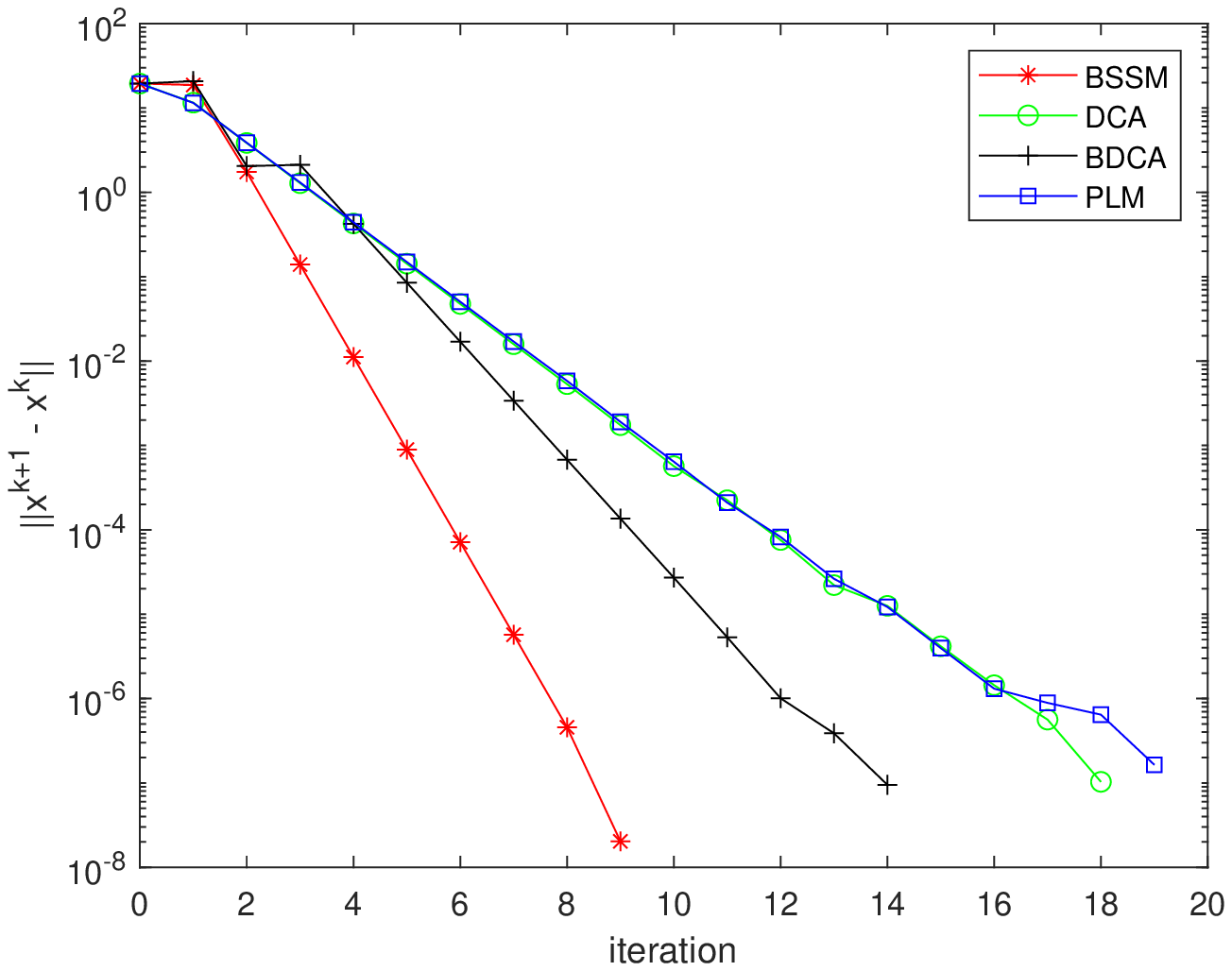}}
\subfloat[$n=50$]{\label{fig2:c}\includegraphics[width=0.25\textwidth]{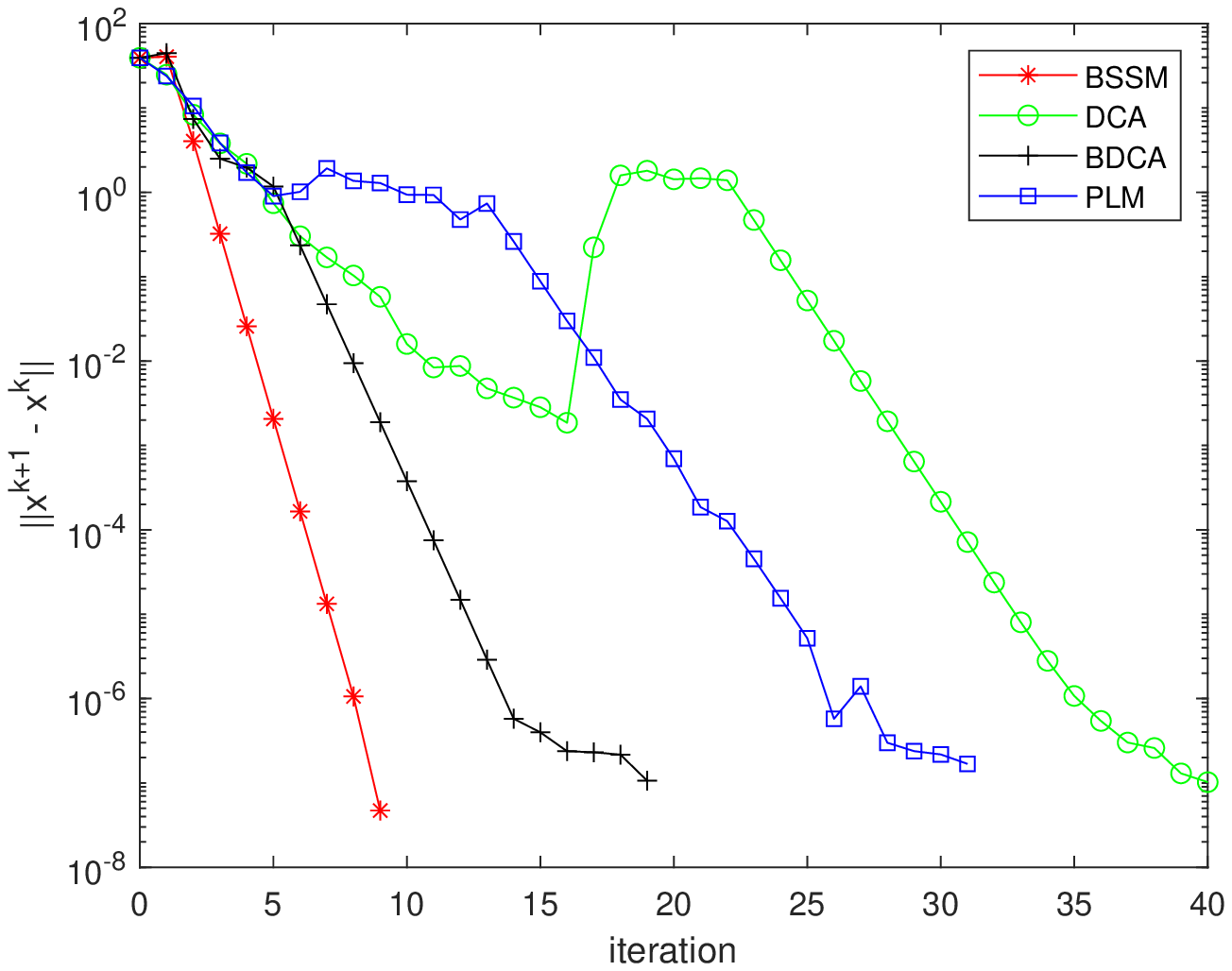}}
\subfloat[$n=100$]{\label{fig2:d}\includegraphics[width=0.25\textwidth]{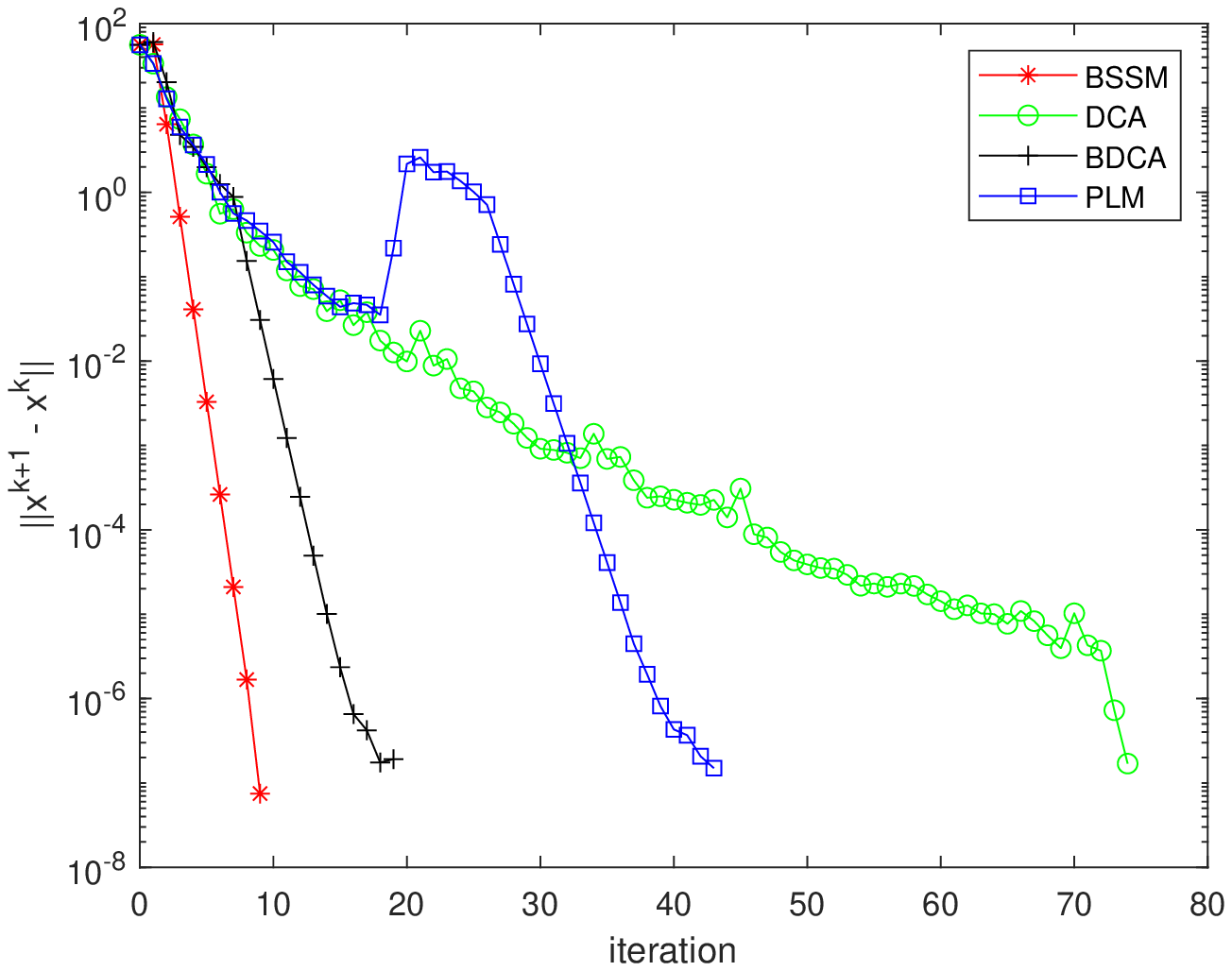}}%
\caption{Value of $||x^{k+1}-x^k||$ (using logarithmic scale) for different dimensions in Example~\ref{ex1}.}
\label{fig2}
\end{figure}
\begin{figure}[h!]
\centering
\subfloat[$n=2$ and $10$]{\label{fig3:a}\includegraphics[width=0.25\linewidth]{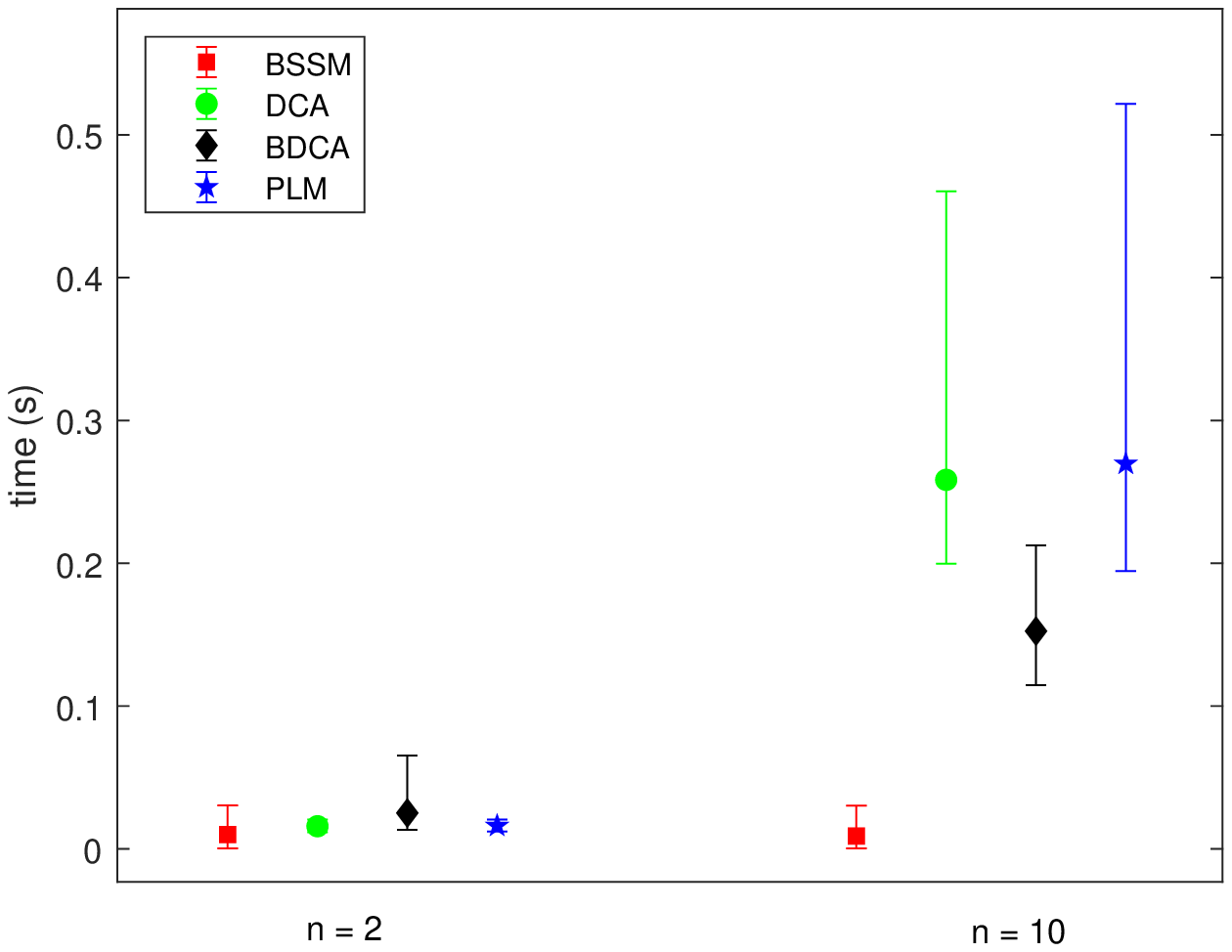}}
\subfloat[$n=50$]{\label{fig3:b}\includegraphics[width=0.25\linewidth]{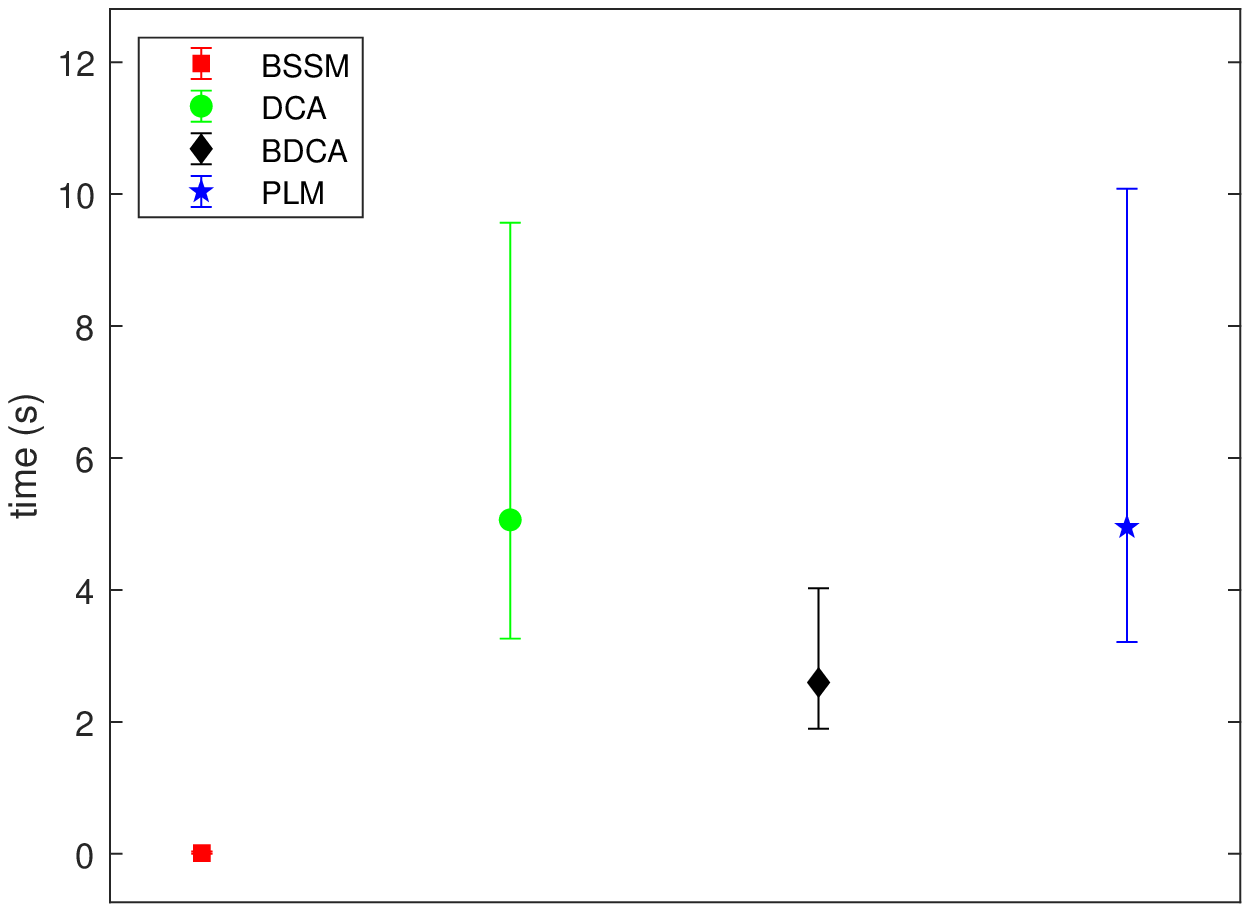}}
\subfloat[$n=100$]{\label{fig3:c}\includegraphics[width=0.25\textwidth]{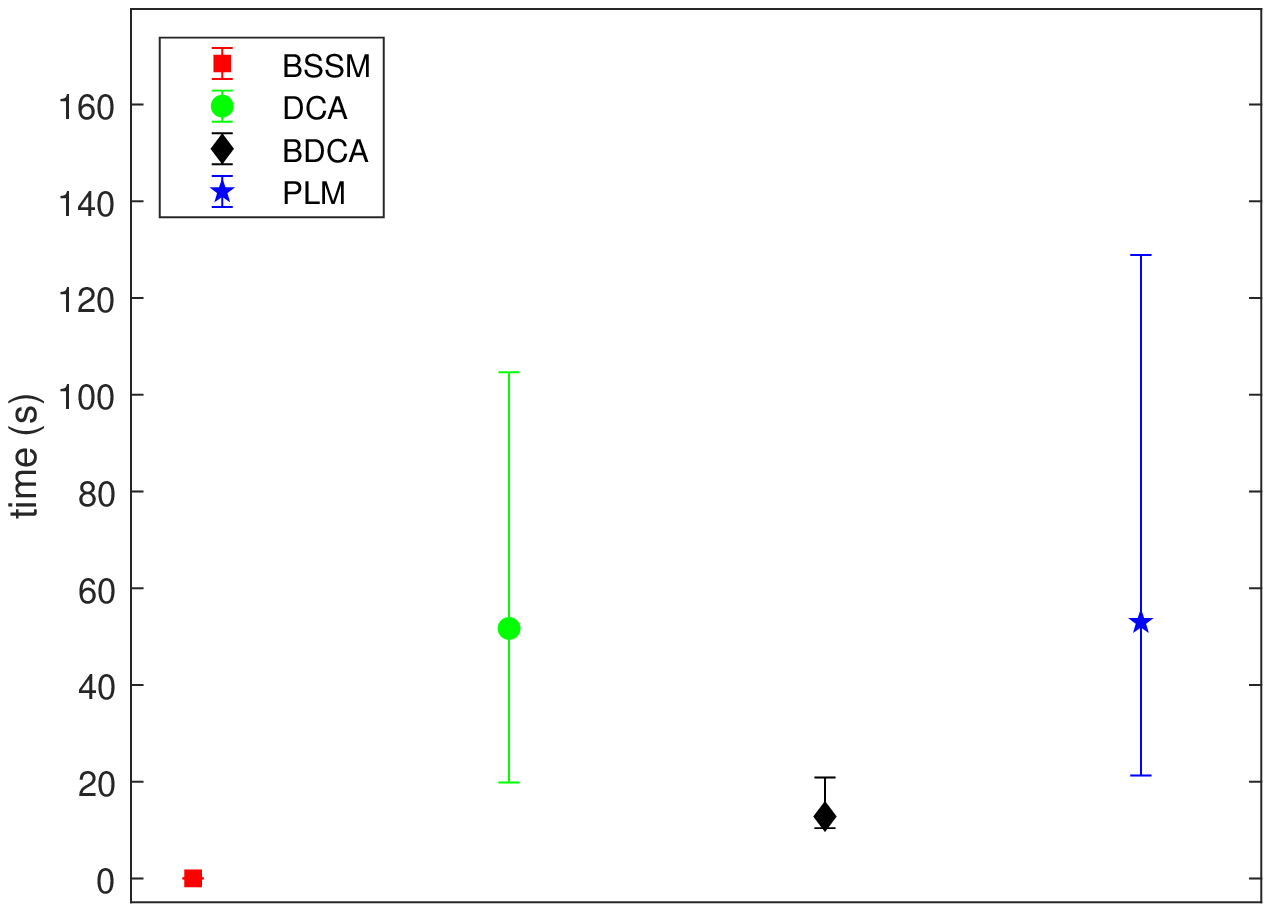}}
\subfloat[$n=2,10,50, and~100$]{\label{fig3:d}\includegraphics[width=0.25\textwidth]{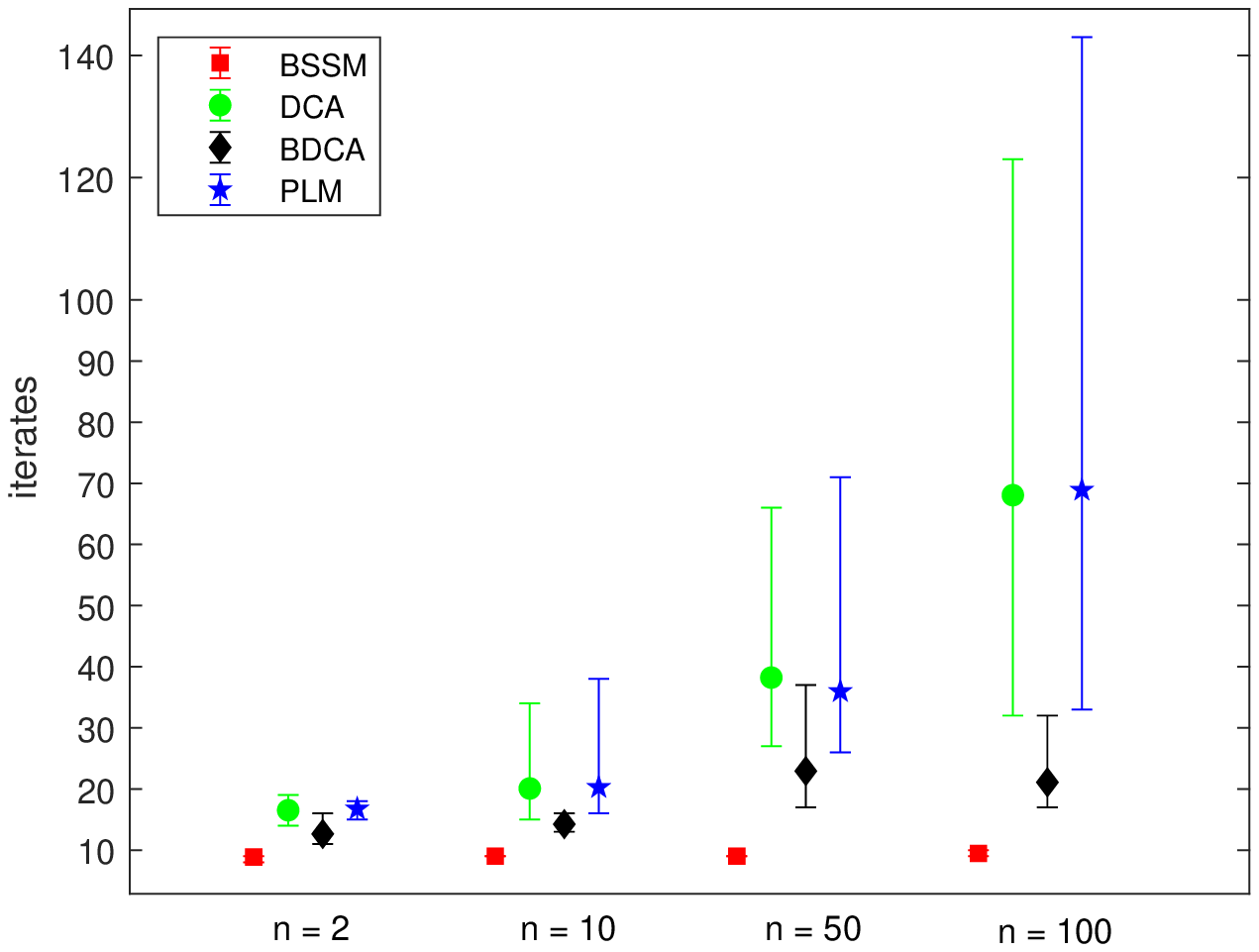}}%
\caption{Variation of CPU time and number of iterations for different dimensions in Example~\ref{ex1}.}
\label{fig3}
\end{figure}

\begin{example}\label{ex2}
Let $\phi:\mathbb{R}^n\to \mathbb{R}$ be a non-differentiable DC function given by $$\phi(x)=||x||^{2}-\sum_{i=2}^{n}|x_{i}-x_{i-1}|,$$ where the DC components are $g(x)=\frac{3}{2}||x||^2$ and $h(x)=\sum_{i=2}^{n}|x_{i}-x_{i-1}|+\frac{1}{2}||x||^2$. 
\end{example}

In Example~\ref{ex2}, we perform the four methods with the same methodology of Example~\ref{ex1} with $\beta_k=0.33$ in BSSM. The results are presented in figures~\ref{fig4}, \ref{fig5} and \ref{fig6}. In this example, we can see that the BSSM maintains its good performance.
\begin{figure}[h!]
\centering
\subfloat[$n=2$]{\label{fig4:a}\includegraphics[width=0.25\linewidth]{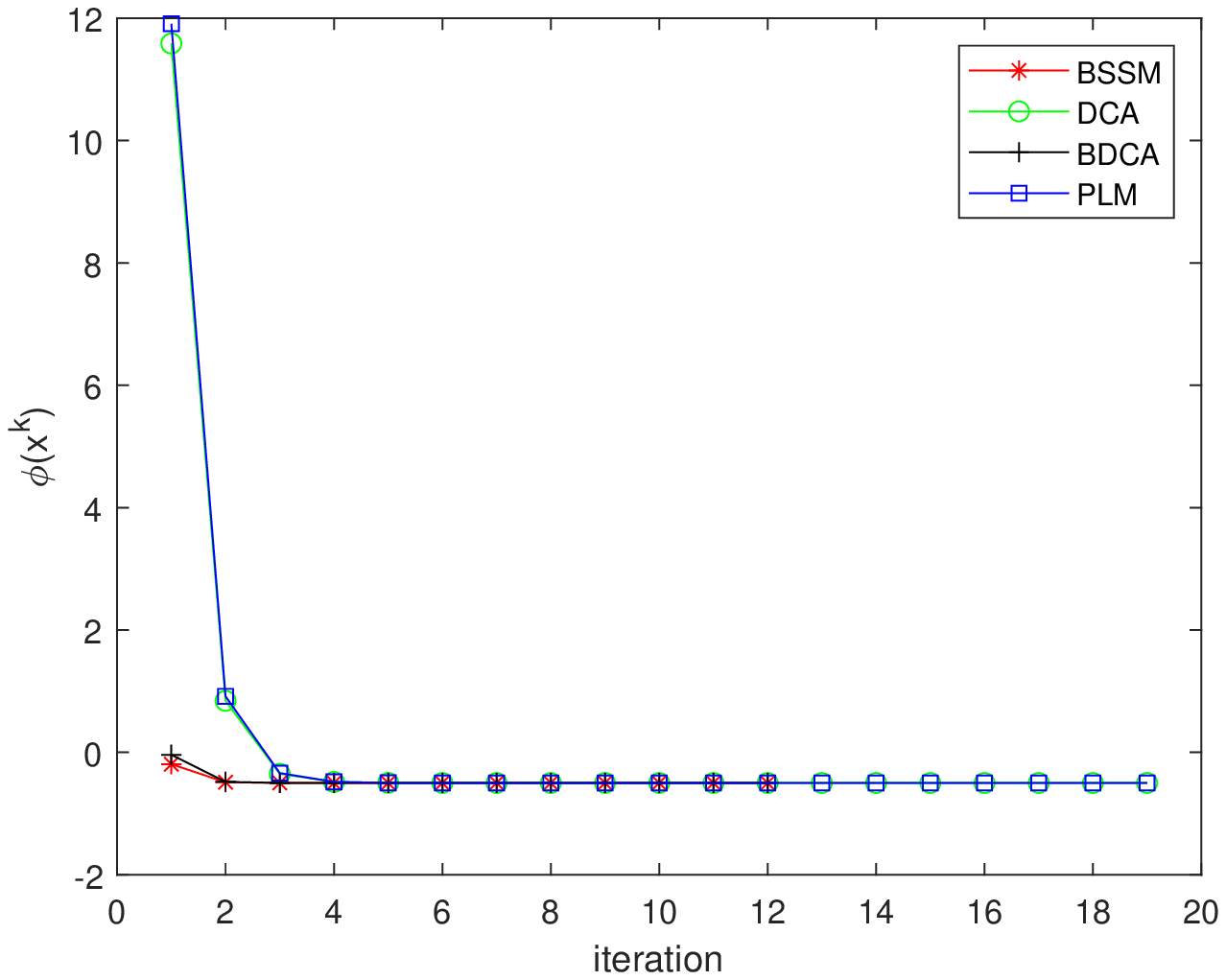}}
\subfloat[$n=10$]{\label{fig4:b}\includegraphics[width=0.25\linewidth]{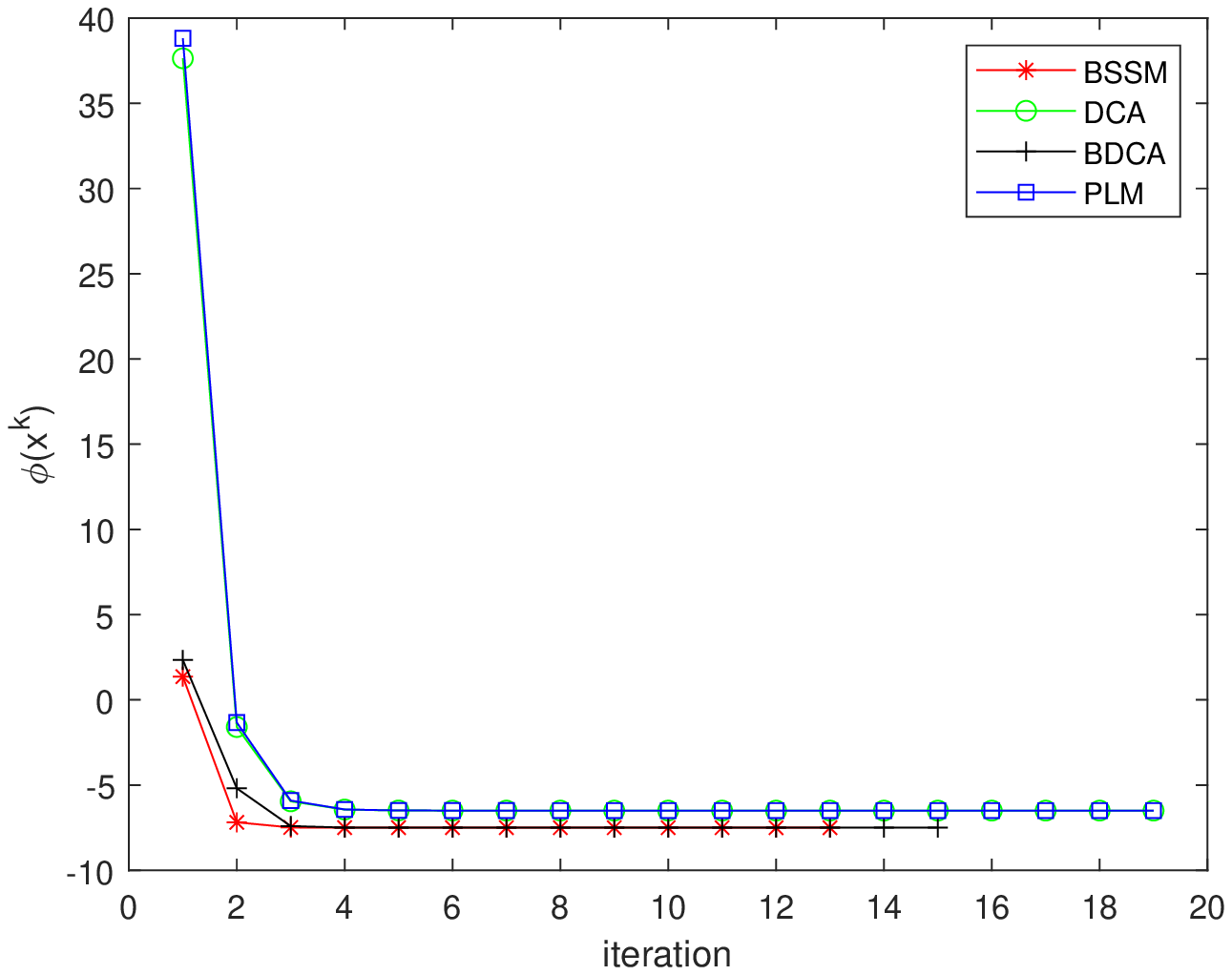}}
\subfloat[$n=50$]{\label{fig4:c}\includegraphics[width=0.25\textwidth]{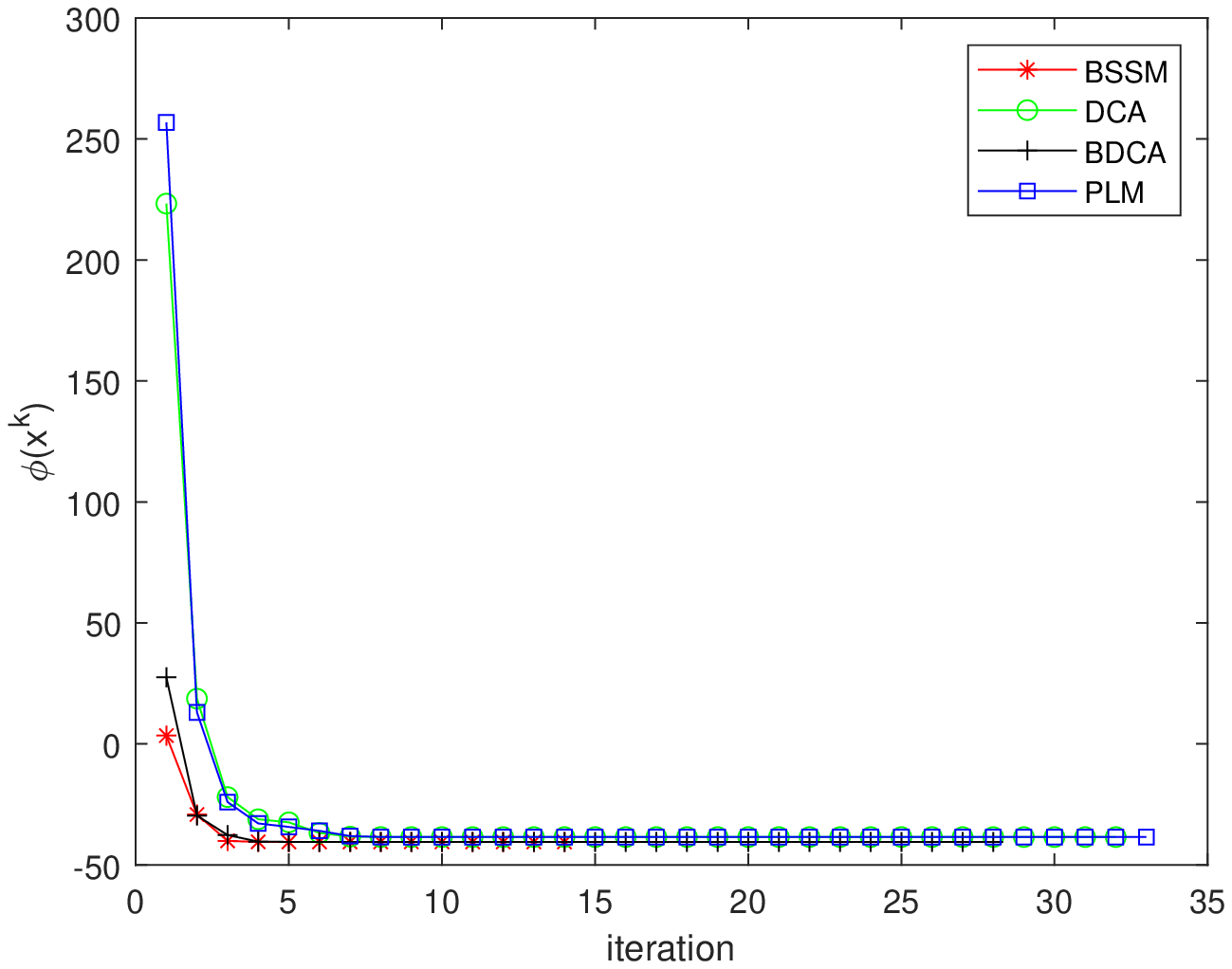}}
\subfloat[$n=100$]{\label{fig4:d}\includegraphics[width=0.25\textwidth]{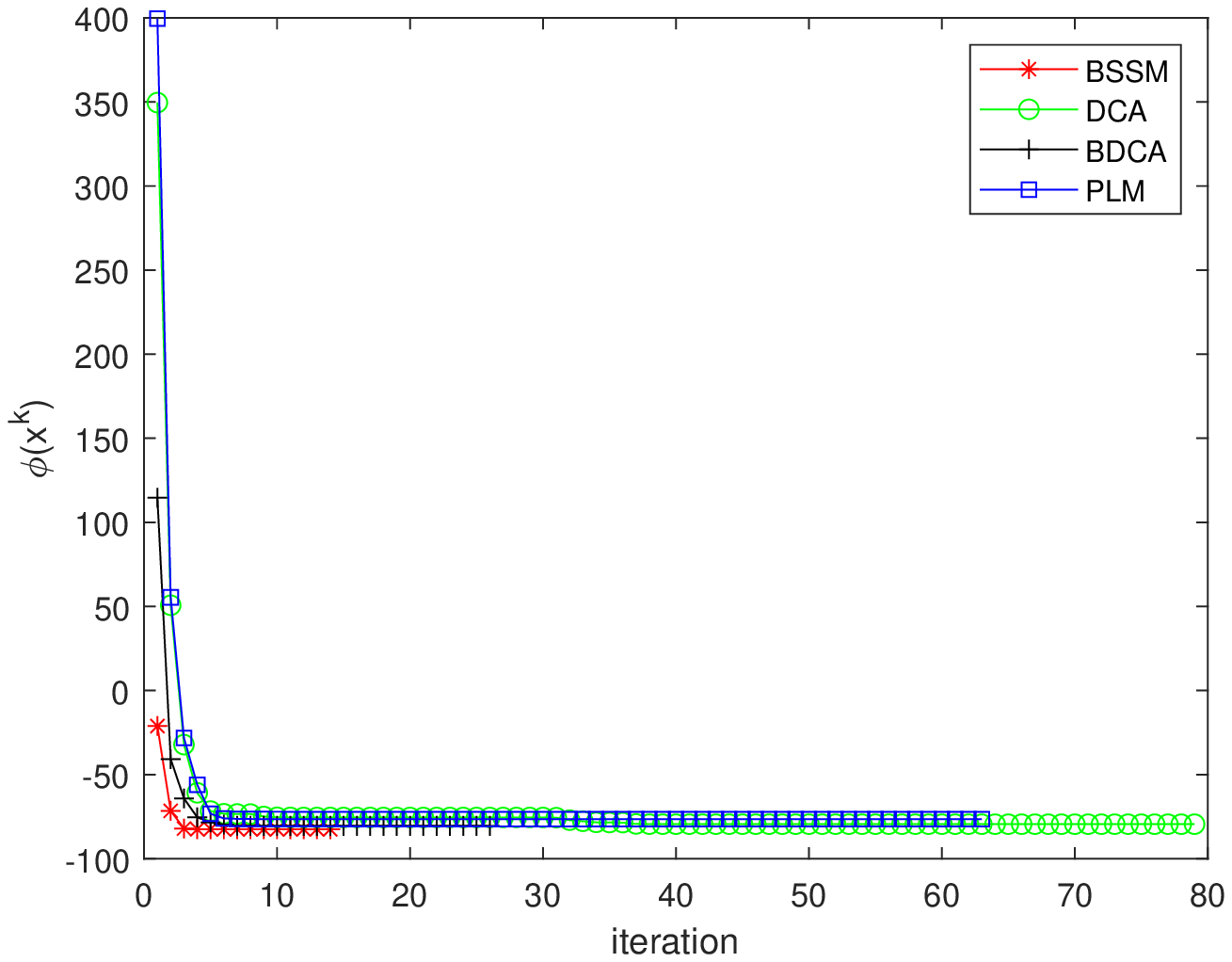}}%
\caption{Value of $||\phi(x^k)-\phi^*||$ (using logarithmic scale) for different dimensions in Example~\ref{ex2}.}
\label{fig4}
\end{figure}
\begin{figure}[h!]
\centering
\subfloat[$n=2$]{\label{fig5:a}\includegraphics[width=0.25\linewidth]{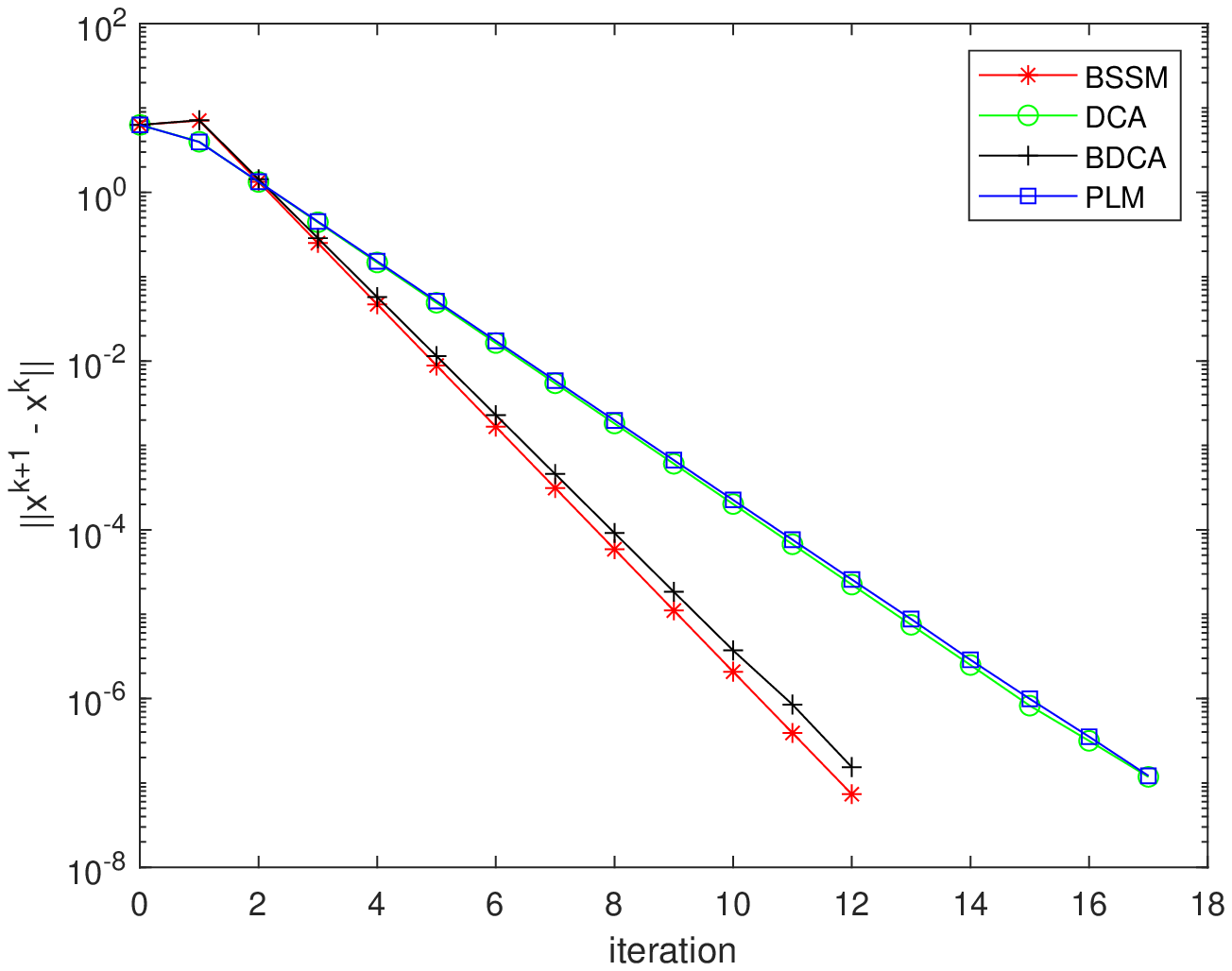}}
\subfloat[$n=10$]{\label{fig5:b}\includegraphics[width=0.25\linewidth]{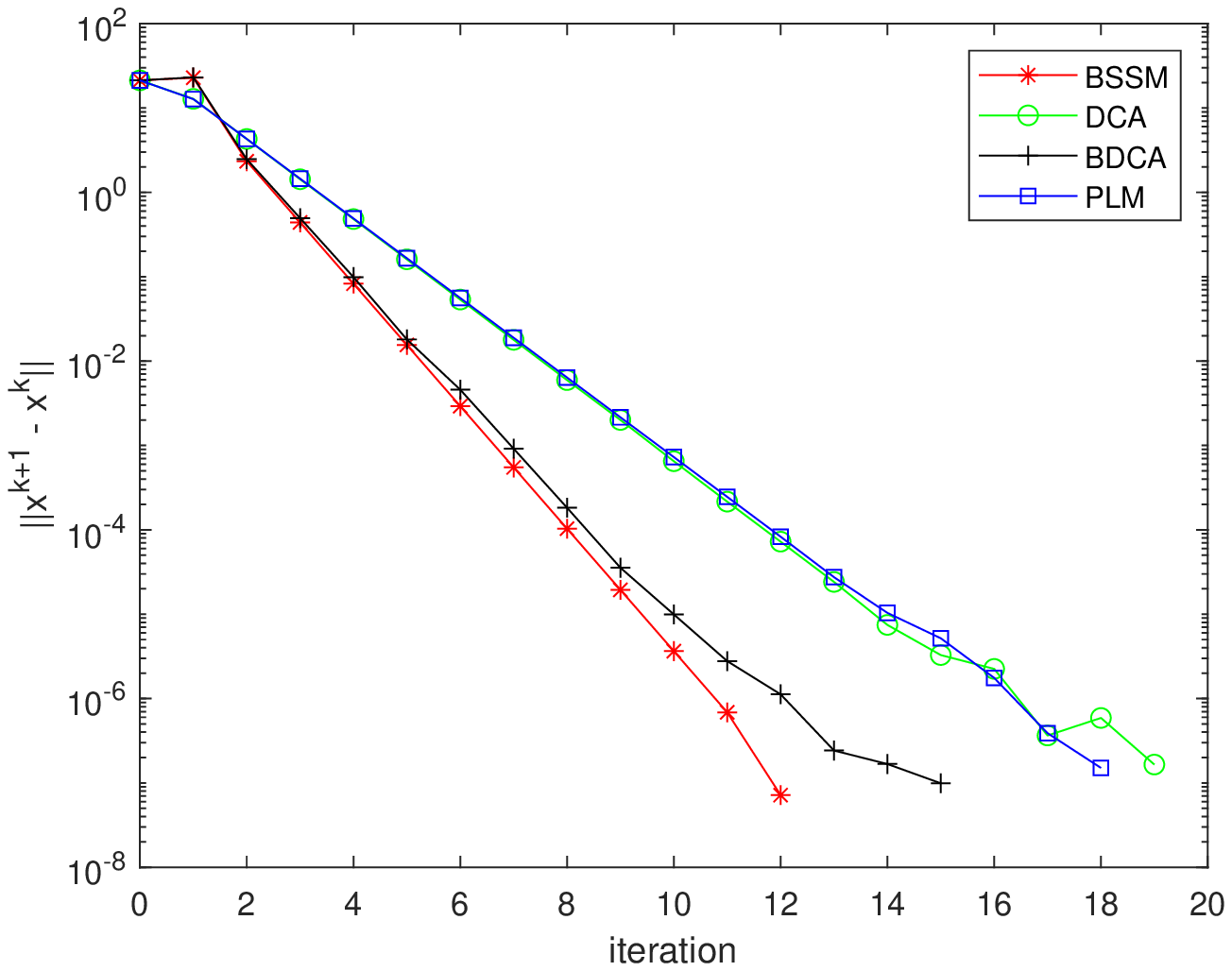}}
\subfloat[$n=50$]{\label{fig5:c}\includegraphics[width=0.25\textwidth]{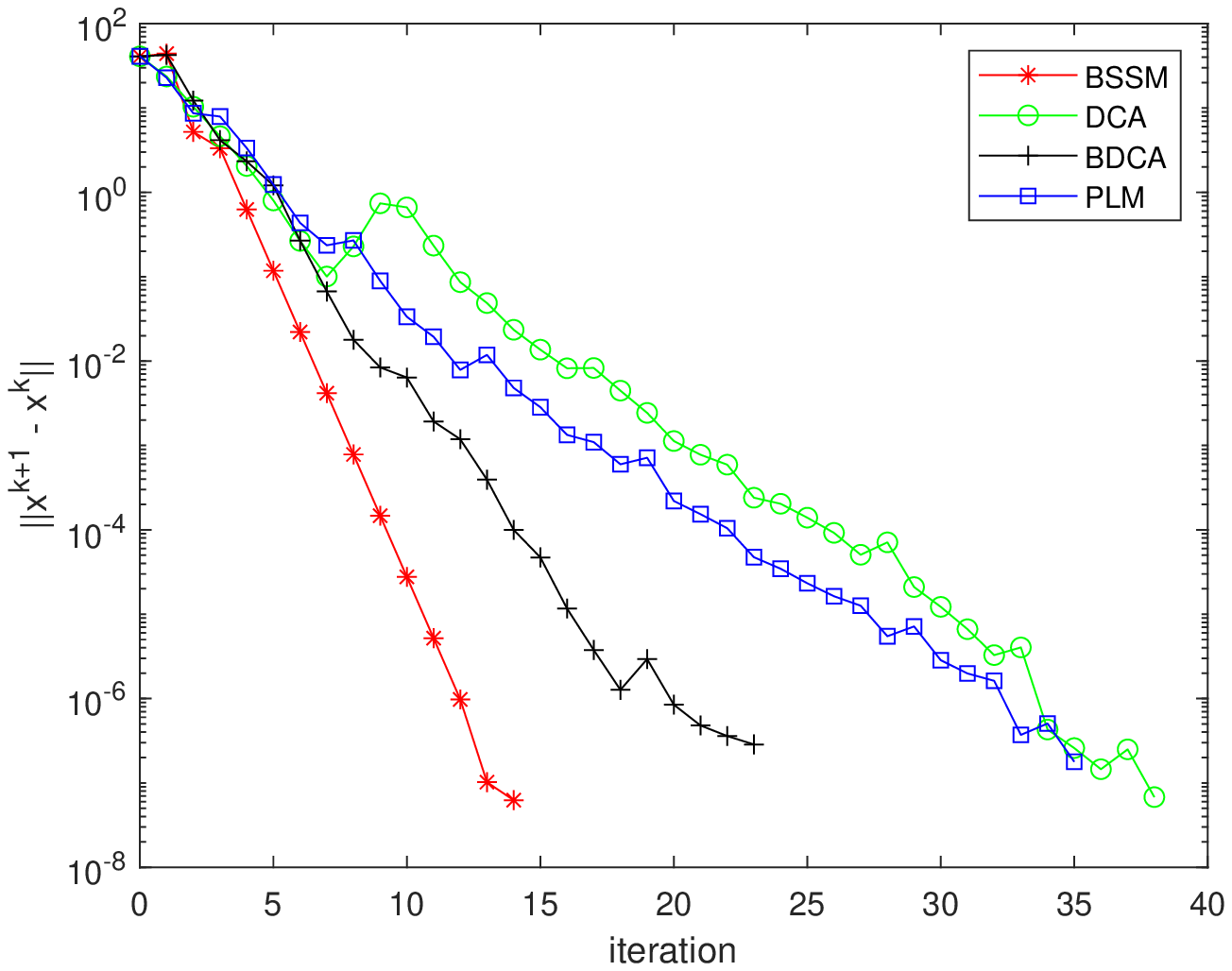}}
\subfloat[$n=100$]{\label{fig5:d}\includegraphics[width=0.25\textwidth]{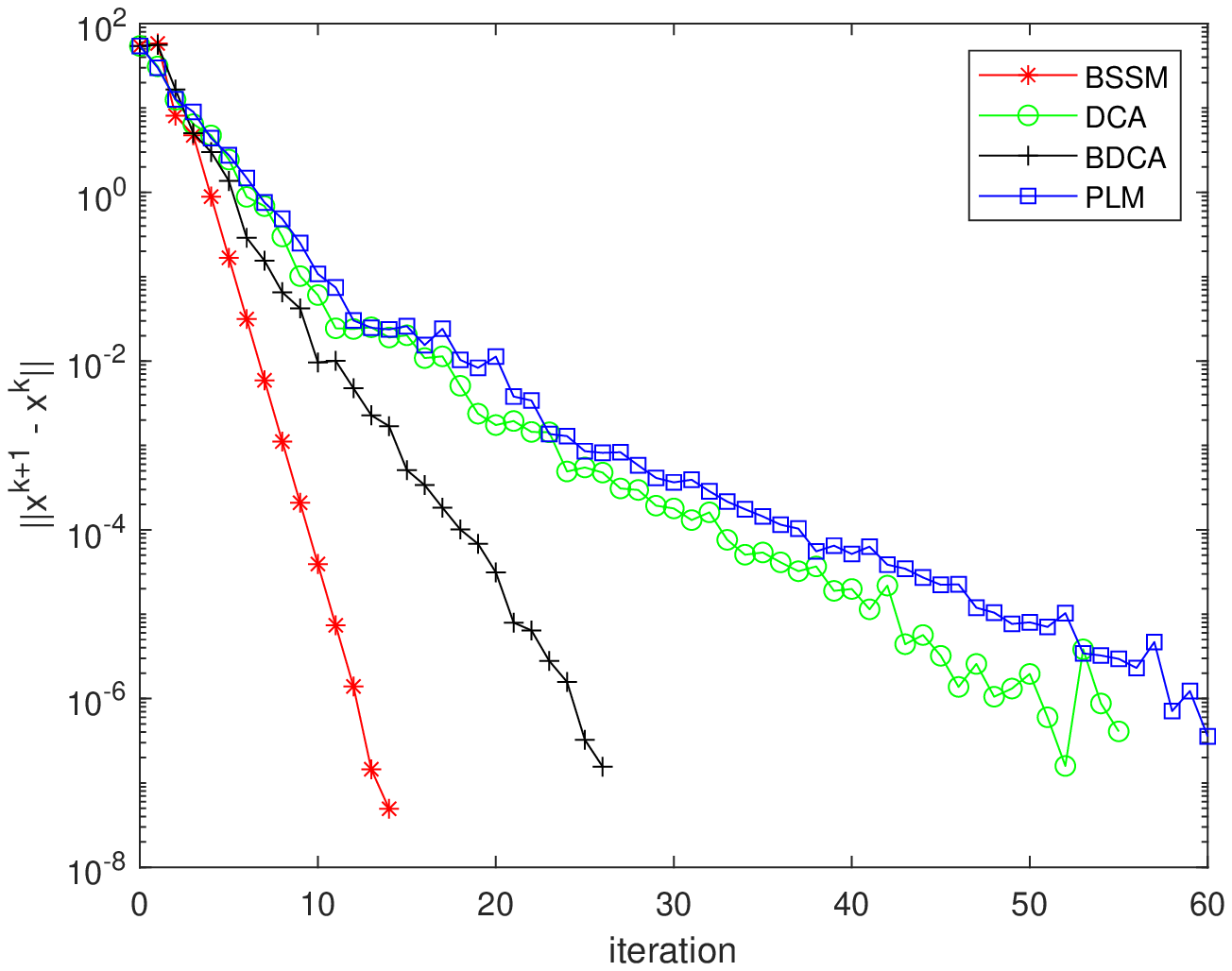}}%
\caption{Value of $||x^{k+1}-x^k||$ (using logarithmic scale) for different dimensions in Example~\ref{ex2}.}
\label{fig5}
\end{figure}
\begin{figure}[h!]
\centering
\subfloat[$n=2$ and $n=10$]{\label{fig6:a}\includegraphics[width=0.25\linewidth]{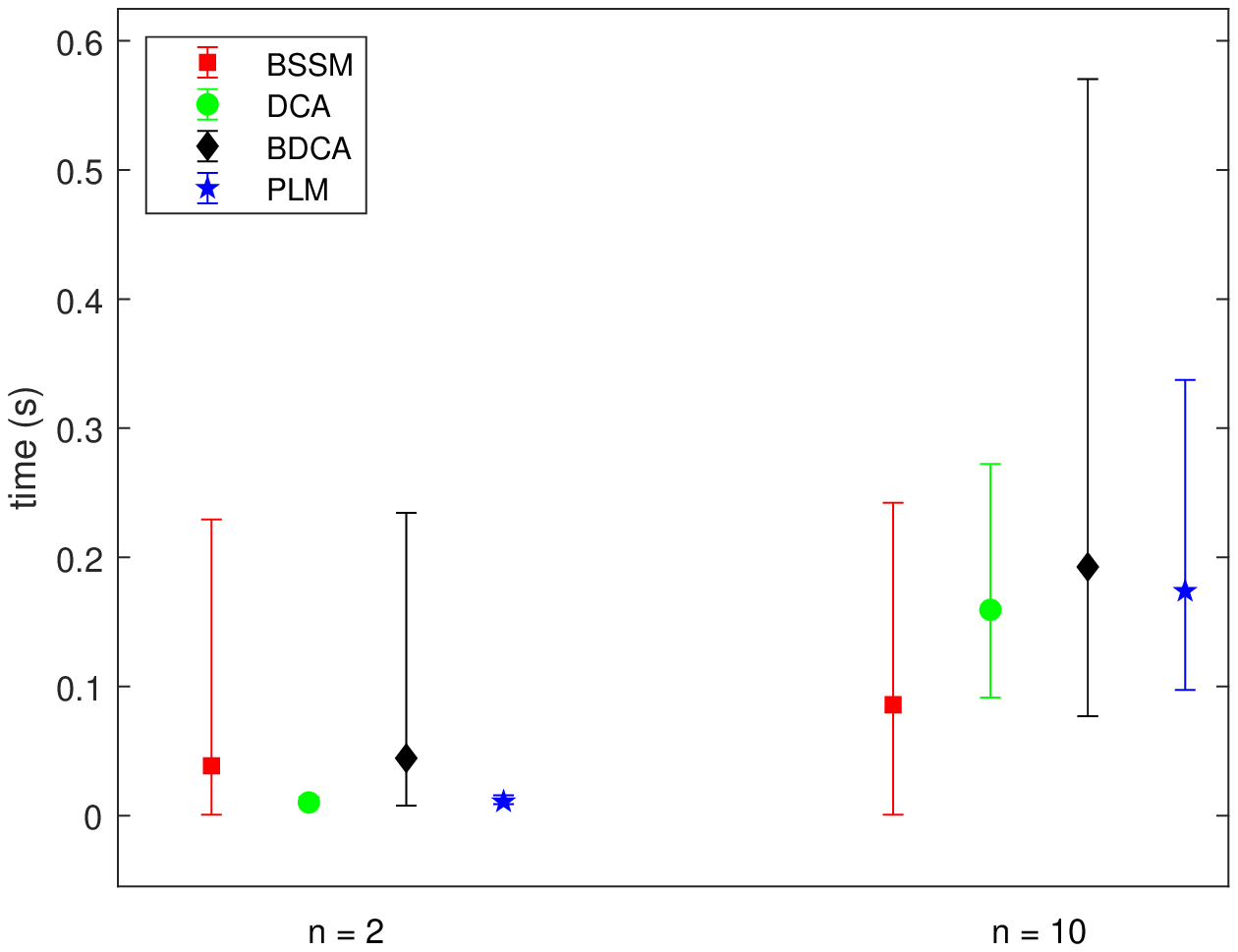}}
\subfloat[$n=50$]{\label{fig6:b}\includegraphics[width=0.25\linewidth]{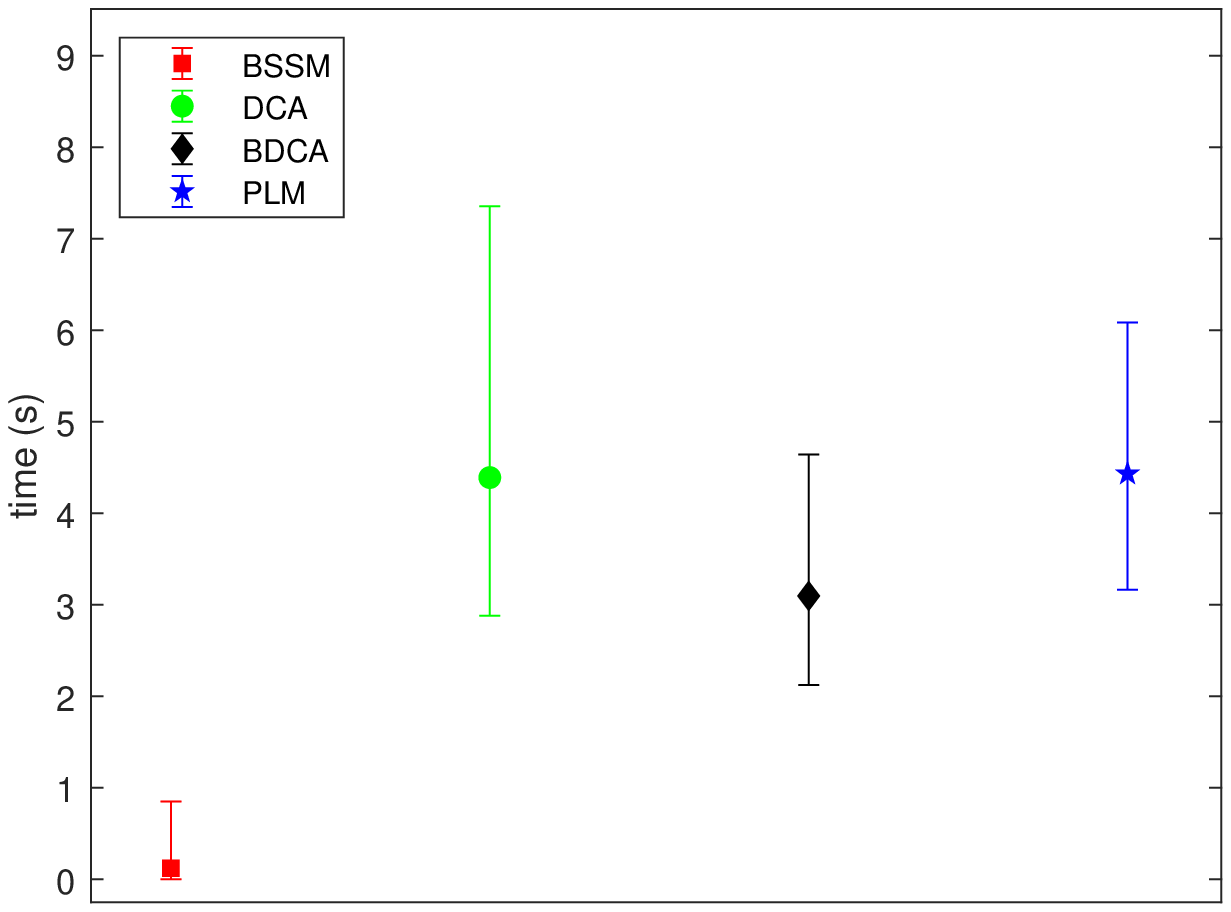}}
\subfloat[$n=100$]{\label{fig6:c}\includegraphics[width=0.25\textwidth]{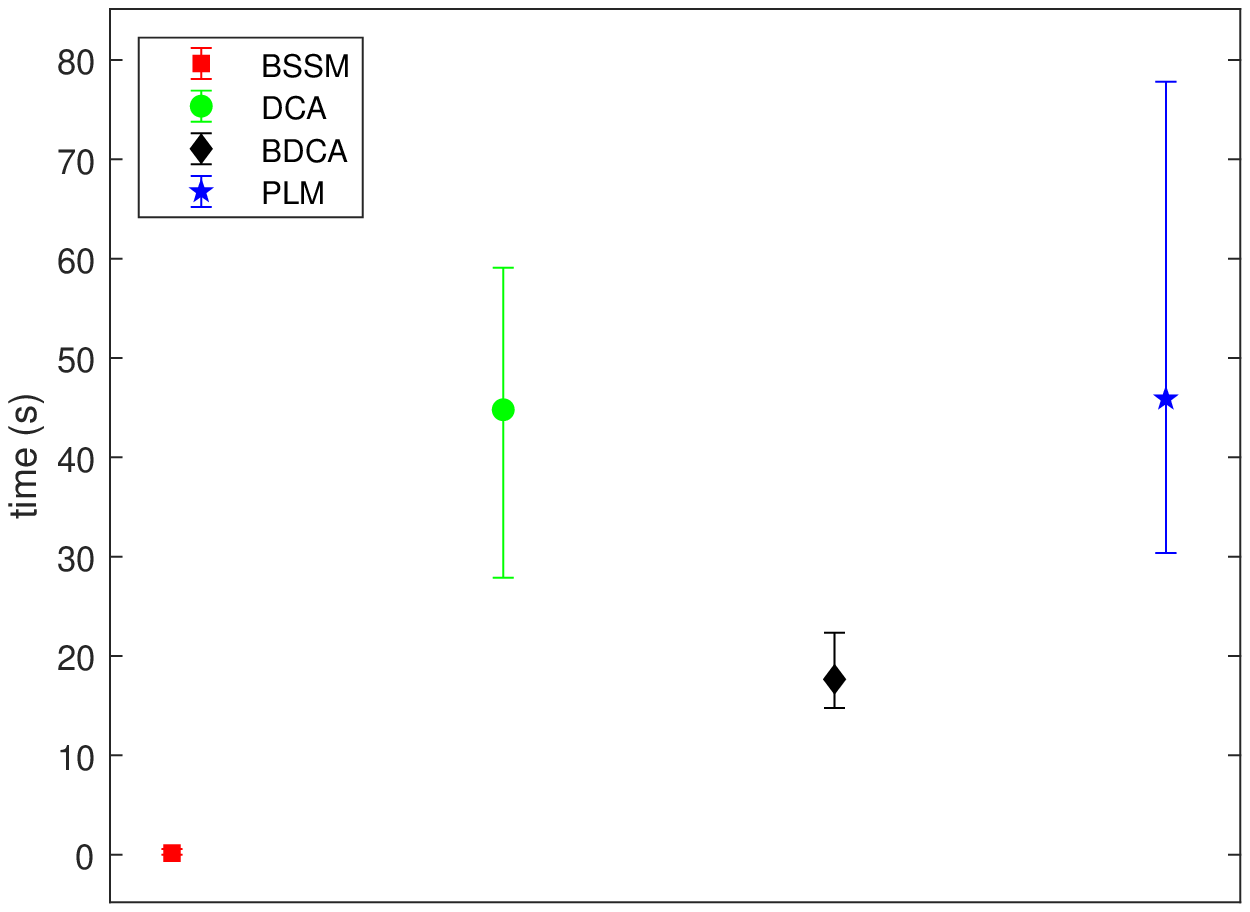}}
\subfloat[$n=2,10,50, and~100$]{\label{fig6:d}\includegraphics[width=0.25\textwidth]{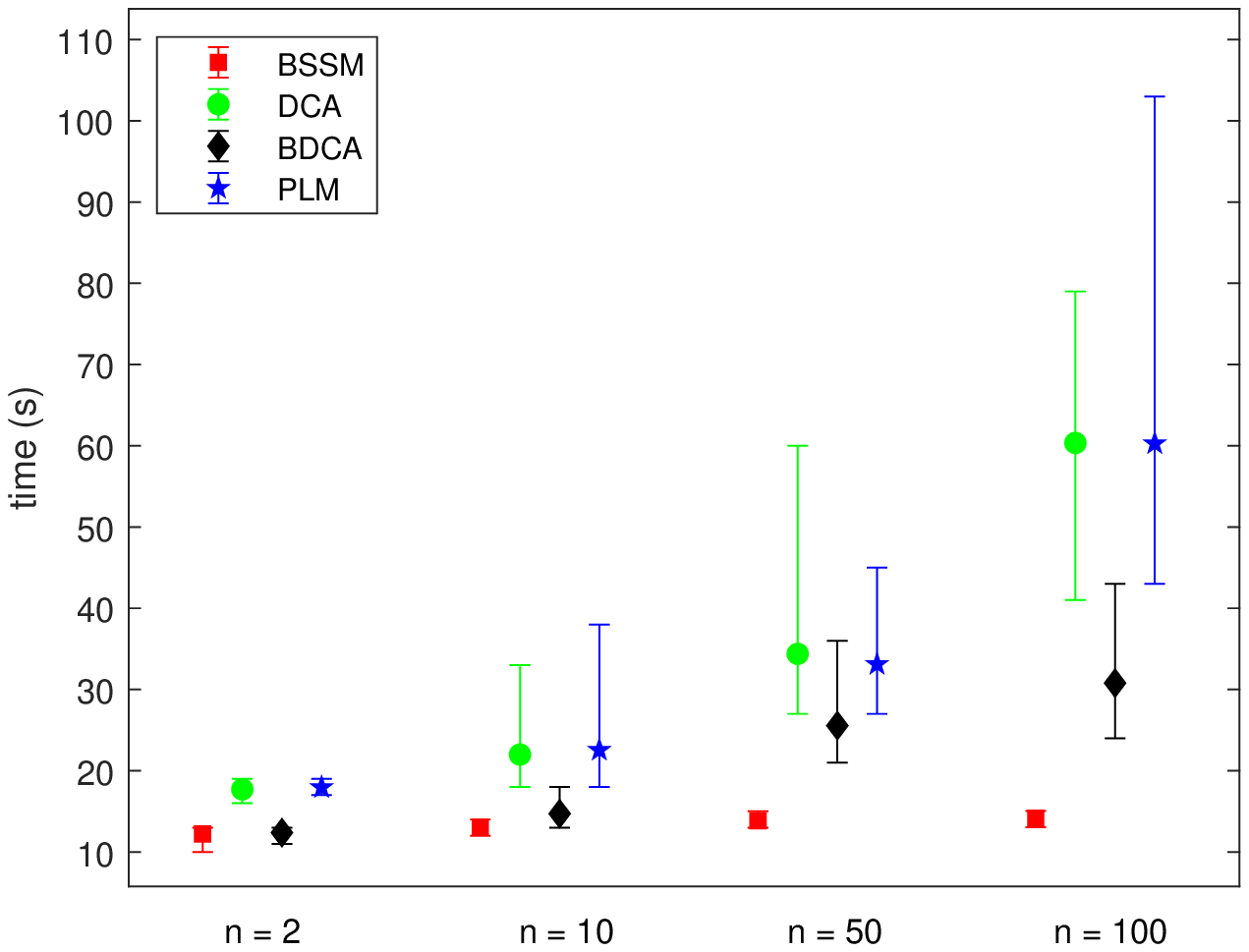}}%
\caption{Variation of CPU time and number of iterations for different dimensions in Example~\ref{ex2}.}
\label{fig6}
\end{figure}

%%%%%%%%%%%%%%%%%%%%%%%%%%%%
\subsection{Fermat-Weber location problem}
The aim of this section is to state and solve the Fermat-Weber location problem; see for example \cite{Brimberg1995, CruzNetoLopesSantosSouza2019}. Let $C_1, \ldots, C_m$ be nonempty, closed and convex subsets of $\mathbb{R}^n$ and $\Omega:=\{C_1, \ldots, C_m\}$. The formulation of the generalized Fermat-Weber location problem is stated as follows 
\[
\min_{x\in\mathbb{R}^n}f(x)=\sum_{i=1}^{m}w_i d(x,C_i), 
\]
where $d(x,C_i):=\inf_{c\in C_i}\|x-c\|$. To fit with the first part of this paper, we consider the following formulation of the above problem
\[
\min_{x\in\mathbb{R}^n}\phi(x)=\sum_{i=1}^{m}w_i d^2(x,C_i), 
\]
motivated by the fact that both problems have the same solution set and $\phi$ can be written as DC function due to $d^2(x,C_i)=\|x\|^2-\sup_{c\in C_i}(2\langle c_i, x\rangle-\|c_i\|^2)$, see \cite{BacakBorwein2011}. In our particular application, we consider the set $C_i=\{c_i\}$, where the data points $c_i$, for $i=1, \ldots, 27$, are given by the coordinate of cities which are capital of all 26 states of Brazil and Bras\'ilia (the Federal District, capital of Brazil). We take equally weights for all $c_i$, namely, $w_i=1$, $i=1,\ldots,27$. The latitude/longitude coordinates of the Brazilian cities can be found, for instance, at {\it ftp://geoftp.ibge.gov.br/Organizacao/Localidades}. We convert the coordinates provided by the website above from positive to negative to match with the real data. Our goal is to find a point that minimizes the sum of the distances to the given points representing the cities. We run the algorithm BSSM using 10 random initial points in $[-67, -33]\times[-30,0]$ as shown in Figure~\ref{fig7} finding the limit point  $(-46.66666, -12.07407)$. For this problem, we take $\beta_k=0.02$.

\begin{figure}[h!]
\centering
\includegraphics[width=0.6\linewidth]{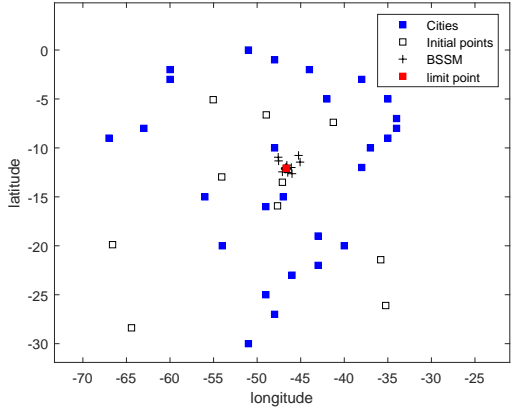}
\caption{Running with 10 random initial points belonging to $[-67, -33]\times[-30,0]$ and $\beta_k=0.02$.}
\label{fig7}
\end{figure}

\section{Conclusions}\label{Sec:Conclusions}
In this paper, we have proposed a boosted scaled subgradient-type method to minimize the difference of two convex functions (DC functions). Although we have refrained from discussing the best strategy to take $H_k$, $\beta_k$ and $\lambda_k$, inspired by the Barzilai and Borwein step size \cite{BarzilaiBorwein1998}, see also \cite[Section 3.2]{bonettini2019recent}, a step length selection subgradient strategy for the choice of $\beta _{k}$ in order to improve the performance of the method can be considered as follows 
$$
\bar{\beta _{k}}:=\max \big\{ \beta _{min},\min \{\beta _{max},\beta _{k}^{BB1}\}\big\}, \qquad \hat{\beta _{k}}:=\max \big\{ \beta _{min},\min \{\beta _{max},\beta _{k}^{BB2}\}\big\},$$
where $0<\beta_{min}<\beta _{max}$, 
$$\beta _{k}^{BB1} =\frac{s^{(k-1)^{T}}{H_{k}}{H_{k}}s^{(k-1)}}{s^{(k-1)^{T}}{H_{k}}u^{(k-1)}},\qquad \beta _{k}^{BB2} =\frac{s^{(k-1)^{T}}{H_{k}}^{-1}u^{(k-1)}}{u^{(k-1)^{T}}{H_{k}}^{-1}{H_{k}}^{-1}u^{(k-1)}},$$
 $s^{(k-1)}=x^{k}-x^{k-1},$ $u^{(k-1)}=(\nabla g(x^{k})-w^{k})-(\nabla g(x^{k-1})-w^{k-1})$. 
Note that \eqref{assumptionHk} allows several strategies for choosing of matrices $H_k$ in order to accelerate the performance of the method. For instance, an interesting choice is to take a  sequence $(H_{k})_{k\in\mathbb{N}}$ of $n\times n$ symmetric positive defined matrices  and sequence of $(\vartheta_k)_{k\in\mathbb{N}}$ of real numbers  satisfying the conditions 
$$
(1+\vartheta_k)H_k \succeq H_{k+}, \qquad \vartheta_k\geq 0,  \qquad \sum_{k=0}^{+\infty}\vartheta_k<+\infty, 
$$
 for details see  \cite{BonettiniPrato2015}. We have considered the Armijo step size in Algorithm~\ref{Alg:ASSPM} but other strategies for choosing the step size can be considered. In Remark~\ref{constantstep}, we have discussed the possibility to take a fixed step size. There are many possibilities in the choice of the step size to investigate, which could further improve the performance of BSSM. For instance, in \cite{ARAGON2019}, a self-adaptive strategy for the step size in BDCA was considered and some advantages with respect to the constant step size were presented allowing their method to obtain a two times speed up when compared with the constant strategy.  In other words, the performance of BSSM can be improved with a suitable choice of $H_k$, $\beta_k$ and $\lambda_k$. This will be considered in a future research.
 
%\section*{Acknowledgments}
%We would like to acknowledge the assistance of volunteers in putting
%together this example manuscript and supplement.

\end{document}